\newtheorem{thm}{Theorem}[section]
\newtheorem{prop}[thm]{Proposition}
\newtheorem{cor}[thm]{Corollary}
\newtheorem{defn}[thm]{Definition}
\newtheorem{remark}[thm]{Remark}
\theoremstyle{definition}
\newcommand{\comment}[1]{}
\numberwithin{equation}{section}
\def\lsim{\raisebox{-1ex}{$~\stackrel{\textstyle <}{\sim}~$}}
\theoremstyle{definition}
\begin{document}
\title[Duals of Hardy-amalgam spaces]{New characterization of the duals of Hardy-amalgam spaces}
\author[Z.V.P. Abl\'e]{Zobo Vincent de Paul Abl\'e}
\address{Laboratoire de Math\'ematiques Fondamentales, UFR Math\'ematiques et Informatique, Universit\'e F\'elix Houphou\"et-Boigny Abidjan-Cocody, 22 B.P 582 Abidjan 22. C\^ote d'Ivoire}
\email{{\tt vincentdepaulzobo@yahoo.fr}}
\author[J. Feuto]{Justin Feuto}
\address{Laboratoire de Math\'ematiques Fondamentales, UFR Math\'ematiques et Informatique, Universit\'e F\'elix Houphou\"et-Boigny Abidjan-Cocody, 22 B.P 1194 Abidjan 22. C\^ote d'Ivoire}
\email{{\tt justfeuto@yahoo.fr}}

\subjclass{42B30, 42B35, 46E30, 47G30} 

\keywords{Amalgam spaces, Hardy-Amalgam spaces, Duality, pseudo-differential operators.}

\date{}
\begin{abstract}
In this paper, carrying on with our study of the Hardy-amalgam spaces $\mathcal H^{(q,p)}$ and $\mathcal{H}_{\mathrm{loc}}^{(q,p)}$ ($0<q,p<+\infty$), we give a characterization of their duals whenever $0<q\leq 1<p<+\infty$. Moreover, when $0<q\leq p\leq 1$, these characterizations coincide with those obtained in our earlier papers. Thus, we obtain a unified characterization of the dual spaces of Hardy-amalgam spaces, when $0<q\leq1$ and $0<q\leq p<+\infty$. 
\end{abstract}

\maketitle

\section{Introduction}
Let $\varphi\in\mathcal C^\infty(\mathbb R^d)$ with support on $B(0,1)$ such that $\int_{\mathbb R^d}\varphi dx=1$, where $B(0,1)$ is the unit open ball centered at $0$ and $\mathcal C^\infty(\mathbb R^d)$ denotes the space of infinitely differentiable complex valued functions on $\mathbb R^d$. For $t>0$, we denote by $\varphi_t$ the dilated function $\varphi_t(x)=t^{-d}\varphi(x/t)$, $x\in\mathbb R^d$. The Hardy-amalgam spaces $\mathcal H^{(q,p)}$ and $\mathcal{H}_{\mathrm{loc}}^{(q,p)}$ ($0<q,p<+\infty$) introduced in \cite{AbFt}, are a generalization of the classical Hardy spaces $\mathcal H^q$ and $\mathcal{H}_{\mathrm{loc}}^q$ in the sense that they are respectively the spaces of tempered distributions $f$ such that the maximal functions
\begin{equation}
\mathcal M_{\varphi}(f):=\sup_{t>0}|f\ast\varphi_t|\ \ \ \text{ and }\ \ \ {\mathcal{M}_{\mathrm{loc}}}_{_{\varphi}}(f):=\sup_{0<t\leq 1}|f\ast\varphi_t|\ \ \ \ \ \label{maximal}
\end{equation}
belong to the Wiener amalgam spaces $(L^q,\ell^p)(\mathbb R^d)=:(L^q,\ell^p)$. 

We recall that for $0<p,q\leq+\infty$,  a locally integrable function $f$ belongs to the amalgam space $(L^q,\ell^p)$ if 
$$\left\|f\right\|_{q,p}:=\left\|\left\{\left\|f\chi_{_{Q_k}}\right\|_{q}\right\}_{k\in\mathbb{Z}^d}\right\|_{\ell^{p}}<+\infty,$$ 
where $Q_k=k+\left[0,1\right)^{d}$ for $k\in\mathbb Z^d$.
Endowed with the (quasi)-norm $\left\|\cdot\right\|_{q,p}$, the amalgam space $(L^q,\ell^p)$ is a complete space, and a Banach space for $1\leq q, p\leq+\infty$. These spaces coincide with the Lebesgue spaces whenever $p=q$, but also the class of these spaces decreases with respect to the local component and increases with respect to the global component. Hardy-amalgam spaces are equal to $(L^q,\ell^p)$ spaces for $1<p,q<+\infty$ and the dual space of $(L^q,\ell^p)$ is known to be $(L^{q'},\ell^{p'})$ under the assumption that $1\leq q,p<+\infty$, where $\frac{1}{q}+\frac{1}{q'}=1$ and $\frac{1}{p}+\frac{1}{p'}=1$, with the usual conventions (see \cite{BDD}, \cite{RBHS}, \cite{FSTW}, \cite{FH} and \cite{JSTW}).

We also know that the Hardy-Littlewood maximal operator $\mathfrak{M}$ defined for a locally integrable function $f$ by 
\begin{eqnarray}
\mathfrak{M}(f)(x)=\underset{r>0}\sup\ |B(x,r)|^{-1}\int_{B(x,r)}|f(y)|dy,\ \ x\in\mathbb{R}^d.\label{maxop}
\end{eqnarray} 
is bounded in $(L^q,\ell^p)$ ($1<q,p<+\infty$) (see \cite{CLHS} Theorems 4.2 and 4.5).

As for the classical Hardy spaces, not only the definition does not depend on the particular function $\varphi$, but the Hardy-amalgam spaces can be characterized in terms of grand maximal function, and we can also replace the regular function $\varphi$ by the Poisson kernel. In the case $0<q\leq\min\left\{1,p\right\}$ and $0<p<+\infty$, we proved in \cite{AbFt} and \cite{AbFt2} that $\mathcal H^{(q,p)}$ and $\mathcal{H}_{\mathrm{loc}}^{(q,p)}$ have an atomic characterization, with atoms which are exactly those of classical Hardy spaces. Moreover, when $0<q\leq p\leq 1$, we obtained a characterization of the duals of these spaces in \cite{AbFt1} and \cite{AbFt2}.\\

The aim of this paper is to characterize the dual of $\mathcal H^{(q,p)}$ and $\mathcal{H}_{\mathrm{loc}}^{(q,p)}$ spaces when  $0<q\leq 1<p<+\infty$. For this purpose, we organize this paper as follows. 

In Section 2, we recall some properties of the Wiener amalgam spaces $(L^q,\ell^p)$ and, the Hardy-amalgam spaces $\mathcal{H}^{(q,p)}$ and $\mathcal{H}_{\mathrm{loc}}^{(q,p)}$ obtained in \cite{AbFt}, \cite{AbFt1} and \cite{AbFt2} that we will need. 

Next, in Section 3, inspired by the approach developed by Wael Abu-Shammala in his thesis \cite{AT} to characterize the dual space of the Hardy-Lorentz spaces $H^{q,p}$ for $0<q<1<p<+\infty$, we characterize the dual space of the Hardy-amalgam $\mathcal{H}^{(q,p)}$ for $0<q\leq 1<p<+\infty$. Moreover, whenever $0<q\leq p\leq 1$, we prove that it coincides with the one obtained in \cite{AbFt1}, unifying so the characterization of the dual of $\mathcal{H}^{(q,p)}$ spaces for $0<q\leq1$ and $0<q\leq p<+\infty$. 

Finally, in the last section, we give a characterization of the dual of $\mathcal{H}_{\mathrm{loc}}^{(q,p)}$ for $0<q\leq 1<p<+\infty$. As in the case of $\mathcal{H}^{(q,p)}$, we obtain a unified characterization of the dual of $\mathcal{H}_{\mathrm{loc}}^{(q,p)}$ spaces for $0<q\leq1$ and $0<q\leq p<+\infty$. As an application of this result, whenever $0<q\leq1$ and $0<q\leq p<+\infty$, we extend a result of boundedness of pseudo-differential operators obtained in \cite{AbFt2}.\\ 

Throughout the paper, we always let $\mathbb{N}=\left\{1,2,\ldots\right\}$ and $\mathbb{Z}_{+}=\left\{0,1,2,\ldots\right\}$. We use $\mathcal S := \mathcal S(\mathbb R^{d})$ to denote the Schwartz class of rapidly decreasing smooth functions equipped with the topology defined by the family of norms $\left\{\mathcal{N}_{m}\right\}_{m\in\mathbb{Z}_{+}}$, where for all $m\in\mathbb{Z}_{+}$ and $\psi\in\mathcal{S}$, $$\mathcal{N}_{m}(\psi):=\underset{x\in\mathbb R^{d}}\sup(1 + |x|)^{m}\underset{|\beta|\leq m}\sum|{\partial}^\beta \psi(x)|$$ with $|\beta|=\beta_1+\ldots+\beta_d$, ${\partial}^\beta=\left(\partial/{\partial x_1}\right)^{\beta_1}\ldots\left(\partial/{\partial x_d}\right)^{\beta_d}$ for all $\beta=(\beta_1,\ldots,\beta_d)\in\mathbb{Z}_{+}^d$ and $|x|:=(x_1^2+\ldots+x_d^2)^{1/2}$. The dual space of $\mathcal S$ is the space of tempered distributions denoted by $\mathcal S':= \mathcal S'(\mathbb R^{d})$ equipped with the weak-$\ast$ topology. If $f\in\mathcal{S'}$ and $\theta\in\mathcal{S}$, we denote the evaluation of $f$ on $\theta$ by $\left\langle f,\theta\right\rangle$. The letter $C$ will be used for non-negative constants independent of the relevant variables that may change from one occurrence to another. When a constant depends on some important parameters $\alpha,\gamma,\ldots$, we denote it by $C(\alpha,\gamma,\ldots)$. Constants with subscript, such as $C_{\alpha,\gamma,\ldots}$, do not change in different occurrences but depend on the parameters mentioned in it. We propose the following abbreviation $\mathrm{\bf A}\lsim \mathrm{\bf B}$ for the inequalities $\mathrm{\bf A}\leq C\mathrm{\bf B}$, where $C$ is a positive constant independent of the main parameters. If $\mathrm{\bf A}\lsim \mathrm{\bf B}$ and $\mathrm{\bf B}\lsim \mathrm{\bf A}$, then we write $\mathrm{\bf A}\approx \mathrm{\bf B}$. For any given quasi-normed spaces $\mathcal{A}$ and $\mathcal{B}$ with the corresponding quasi-norms $\left\|\cdot\right\|_{\mathcal{A}}$ and $\left\|\cdot\right\|_{\mathcal{B}}$, the symbol $\mathcal{A}\hookrightarrow\mathcal{B}$ means that if $f\in\mathcal{A}$, then $f\in\mathcal{B}$ and $\left\|f\right\|_{\mathcal{B}}\lsim\left\|f\right\|_{\mathcal{A}}$. Also, $\mathcal{A}\cong\mathcal{B}$ means that $\mathcal{A}$ is isomorphic to $\mathcal{B}$, with equivalence of the quasi-norms $\left\|\cdot\right\|_{\mathcal{A}}$ and $\left\|\cdot\right\|_{\mathcal{B}}$.

For $\lambda>0$ and a cube $Q\subset\mathbb R^{d}$ (by a cube we mean a cube whose edges are parallel to the coordinate axes), we write $\lambda Q$ for the cube with same center as $Q$ and side-length $\lambda$ times side-length of $Q$, while $\left\lfloor \lambda \right\rfloor$ stands for the greatest integer less or equal to $\lambda$. Also, for $x\in\mathbb R^{d}$ and $\ell>0$, $Q(x,\ell)$ will denote the cube centered at $x$ and side-length $\ell$. We use the same notations for balls. For a measurable set $E\subset\mathbb R^d$, we denote by $\chi_{_{E}}$ the characteristic function of $E$ and $\left|E\right|$ for its Lebesgue measure. To finish, we denote by $\mathcal{Q}$ the set of all cubes.

\section{Prerequisites on Wiener amalgam and Hardy-amalgam spaces } 

\subsection{Wiener amalgam spaces $(L^q,\ell^p)$} 

Let  $0<q<1$ and $0<p\leq 1$. We have the following reverse Minkowski's inequality for $(L^q, \ell^p)$ proved in \cite{AbFt1}.

\begin{prop} \label{InverseHoldMink3}
Let $0<q<1$ and $0<p\leq 1$. For all finite sequence $\left\{f_n\right\}_{n=0}^m$ of elements of $(L^q, \ell^p)$, we have
\begin{eqnarray}
\sum_{n=0}^m\left\|f_n\right\|_{q,p}\leq\left\|\sum_{n=0}^m|f_n|\right\|_{q,p}. \label{InverseHoldMink4}
\end{eqnarray}
\end{prop}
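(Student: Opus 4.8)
The plan is to reduce the claim to the scalar reverse Minkowski inequality in $\ell^p$ and the reverse Minkowski inequality in $L^q$ separately, both of which are classical for exponents in $(0,1]$.

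First I would recall the two ingredients. For $0<q<1$ and functions $g_0,\dots,g_m$ supported on a common set (here the cube $Q_k$), one has the reverse Minkowski inequality $\sum_{n=0}^m\|g_n\|_q\le\big\|\sum_{n=0}^m|g_n|\big\|_q$; this follows from the concavity of $t\mapsto t^q$ together with the triangle-type inequality $\|g+h\|_q\ge\|g\|_q+\|h\|_q$ for nonnegative $g,h$ in $L^q$ when $0<q<1$ (see e.g.\ the standard proof via $\||g|+|h|\|_q^q=\int(|g|+|h|)^q\ge\int|g|^q\cdot(\cdots)$, or simply cite \cite{AbFt1}). Likewise, for $0<p\le1$ the map $\{a_k\}\mapsto\|\{a_k\}\|_{\ell^p}$ on nonnegative sequences satisfies $\|\{a_k+b_k\}_k\|_{\ell^p}\ge\|\{a_k\}_k\|_{\ell^p}+\|\{b_k\}_k\|_{\ell^p}$, again by concavity of $t\mapsto t^p$, hence $\sum_{n}\|\{a^{(n)}_k\}_k\|_{\ell^p}\le\big\|\{\sum_n a^{(n)}_k\}_k\big\|_{\ell^p}$.

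Then I would chain these. Fix the finite sequence $\{f_n\}_{n=0}^m\subset(L^q,\ell^p)$. For each $k\in\mathbb Z^d$, apply the scalar reverse Minkowski in $L^q$ to the functions $f_n\chi_{Q_k}$:
\begin{equation*}
\sum_{n=0}^m\big\|f_n\chi_{Q_k}\big\|_q\le\Big\|\sum_{n=0}^m|f_n|\chi_{Q_k}\Big\|_q=\Big\|\Big(\sum_{n=0}^m|f_n|\Big)\chi_{Q_k}\Big\|_q.
\end{equation*}
Now set $a^{(n)}_k:=\|f_n\chi_{Q_k}\|_q$ and $b_k:=\big\|(\sum_{n}|f_n|)\chi_{Q_k}\big\|_q$, so that $\sum_{n=0}^m a^{(n)}_k\le b_k$ for every $k$. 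Since $\|\cdot\|_{\ell^p}$ is monotone on nonnegative sequences, and using the reverse Minkowski in $\ell^p$,
\begin{equation*}
\sum_{n=0}^m\|f_n\|_{q,p}=\sum_{n=0}^m\big\|\{a^{(n)}_k\}_k\big\|_{\ell^p}\le\Big\|\Big\{\sum_{n=0}^m a^{(n)}_k\Big\}_k\Big\|_{\ell^p}\le\big\|\{b_k\}_k\big\|_{\ell^p}=\Big\|\sum_{n=0}^m|f_n|\Big\|_{q,p},
\end{equation*}
which is exactly \eqref{InverseHoldMink4}.

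I do not anticipate a serious obstacle: the statement is a routine two-step application of reverse Minkowski, once in the local ($L^q$) variable and once in the global ($\ell^p$) variable. The only points requiring a word of care are the monotonicity of $\|\cdot\|_{\ell^p}$ on nonnegative sequences (immediate from the definition) and making sure the intermediate quantities are finite, which holds because each $f_n\in(L^q,\ell^p)$ and the sum is finite; alternatively one simply cites \cite{AbFt1} for both scalar inequalities and assembles them as above. If a self-contained argument for the $\ell^p$ step is preferred, it can be obtained by the substitution $\|\{c_k\}\|_{\ell^p}=(\sum_k c_k^p)^{1/p}$ and subadditivity of $t\mapsto t^{1/p}$ applied after expanding, exactly mirroring the $L^q$ case.
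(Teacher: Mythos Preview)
Your argument is correct: the two-step chaining of the reverse Minkowski inequality, first in $L^q$ on each cube $Q_k$ and then in $\ell^p$ over $k$, together with the monotonicity of $\|\cdot\|_{\ell^p}$ on nonnegative sequences, yields \eqref{InverseHoldMink4} exactly as you wrote. Note that the present paper does not actually prove this proposition; it is quoted from \cite{AbFt1}, so there is no in-paper proof to compare against, but your approach is the standard one and is presumably what \cite{AbFt1} does as well.

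One small cosmetic point: in your closing aside you mention ``subadditivity of $t\mapsto t^{1/p}$'' for the $\ell^p$ step, but for $0<p\le1$ one has $1/p\ge1$ and $t\mapsto t^{1/p}$ is superadditive (convex, vanishing at $0$), which is in fact the correct direction needed for the reverse inequality; the main body of your proof is unaffected by this slip.
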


Notice that this proposition holds for any sequence in $(L^q, \ell^p)$, for $0<q<1$ and $0<p\leq 1$, but also for $q=1$.

Another very useful result is the following. 

\begin{prop}[\cite{LSUYY}, Proposition 11.12] \label{operamaxima}
Let $1<u\leq+\infty$ and $1<q,p<+\infty$. Then, for all sequence of measurable functions $\left\{f_n\right\}_{n\geq 0}$,
\begin{eqnarray*}
\left\|\left(\sum_{n\geq0}\left[\mathfrak{M}(f_n)\right]^{u}\right)^{\frac{1}{u}}\right\|_{q,p}\approx\left\|\left(\sum_{n\geq0}|f_n|^{u}\right)^{\frac{1}{u}}\right\|_{q,p}, 
\end{eqnarray*}
with the implicit equivalent positive constants independent of $\left\{f_n\right\}_{n\geq 0}$ .
\end{prop}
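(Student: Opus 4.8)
I would establish the two inequalities separately. The inequality $\gsim$ is immediate: by the Lebesgue differentiation theorem $|f_n(x)|\le\mathfrak{M}(f_n)(x)$ for almost every $x$ and every $n$, whence $\left(\sum_n|f_n|^u\right)^{1/u}\le\left(\sum_n[\mathfrak{M}(f_n)]^u\right)^{1/u}$ a.e., and $\|\cdot\|_{q,p}$ is monotone. For the reverse inequality the case $u=+\infty$ is also immediate, since $\sup_n\mathfrak{M}(f_n)\le\mathfrak{M}\big(\sup_n|f_n|\big)$ pointwise and $\mathfrak{M}$ is bounded on $(L^q,\ell^p)$ for $1<q,p<+\infty$. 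Thus the real content is the estimate, for $1<u<+\infty$,
\[
\Big\|\Big(\sum_n[\mathfrak{M}(f_n)]^u\Big)^{1/u}\Big\|_{q,p}\lsim\Big\|\Big(\sum_n|f_n|^u\Big)^{1/u}\Big\|_{q,p}=:\|g\|_{q,p},
\]
and my plan is to prove it by localizing to the unit cubes $Q_k$ and splitting each averaging ball $B(x,r)$ according to whether $r\le1$ or $r>1$.

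Fix $k\in\mathbb{Z}^d$ and $x\in Q_k$, and write $\langle h\rangle_B:=|B|^{-1}\int_B|h|$. When $r\le1$ we have $B(x,r)\subset5Q_k$, so $\langle|f_n|\rangle_{B(x,r)}\le\mathfrak{M}(f_n\chi_{_{5Q_k}})(x)$. When $r>1$, covering $B(x,r)$ by the unit cubes it meets and using that $\#\{m:Q_m\cap B(x,r)\ne\emptyset\}\lsim|B(x,r)|$ and that this index set is a discrete ball about $k$ of comparable cardinality, one gets $\langle|f_n|\rangle_{B(x,r)}\lsim\mathfrak{M}_{\mathbb{Z}^d}\big(\{\|f_n\chi_{_{Q_m}}\|_1\}_m\big)(k)$, where $\mathfrak{M}_{\mathbb{Z}^d}$ denotes the discrete Hardy--Littlewood maximal operator on $\mathbb{Z}^d$. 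Hence, for all $x\in Q_k$,
\[
\mathfrak{M}(f_n)(x)\lsim\mathfrak{M}(f_n\chi_{_{5Q_k}})(x)+\mathfrak{M}_{\mathbb{Z}^d}\big(\{\|f_n\chi_{_{Q_m}}\|_1\}_m\big)(k).
\]
Taking the $\ell^u$-norm in $n$ (Minkowski's inequality, which is available since $u\ge1$) and then the $L^q(Q_k)$-norm in $x$ --- the second term being constant in $x$, the first being controlled by the classical vector-valued Fefferman--Stein inequality on $L^q(\mathbb{R}^d)$ for $1<q,u<+\infty$ --- yields
\[
\Big\|\Big(\sum_n[\mathfrak{M}(f_n)]^u\Big)^{1/u}\chi_{_{Q_k}}\Big\|_q\lsim\big\|g\chi_{_{5Q_k}}\big\|_q+\Big(\sum_n\big[\mathfrak{M}_{\mathbb{Z}^d}(\{\|f_n\chi_{_{Q_m}}\|_1\}_m)(k)\big]^u\Big)^{1/u}.
\]

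It then remains to take the $\ell^p$-norm in $k$. For the first term, the bounded overlap of the cubes $\{5Q_k\}_k$ together with $q\ge1$ give $\big\|\{\|g\chi_{_{5Q_k}}\|_q\}_k\big\|_{\ell^p}\lsim\|g\|_{q,p}$. For the second term I would invoke the vector-valued Fefferman--Stein inequality for $\mathfrak{M}_{\mathbb{Z}^d}$ on $\ell^p(\mathbb{Z}^d)$ ($1<p,u<+\infty$), which follows from the Euclidean one by comparing discrete and continuous averages; this reduces the term to $\big\|\{(\sum_n\|f_n\chi_{_{Q_m}}\|_1^u)^{1/u}\}_m\big\|_{\ell^p}$, and Minkowski's integral inequality followed by H\"older's inequality on the unit cube $Q_m$ gives $(\sum_n\|f_n\chi_{_{Q_m}}\|_1^u)^{1/u}\le\|g\chi_{_{Q_m}}\|_1\le\|g\chi_{_{Q_m}}\|_q$, so this term is $\lsim\|g\|_{q,p}$ as well. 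Combining the three displays finishes the proof.

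The only delicate point, I expect, is the large-radius part: it is tempting to dominate $\sup_{r>1}$ by a dyadic sum over scales $r\sim2^j$, but the resulting $\ell^u$-sum over $j$ diverges, so the supremum must be kept and interpreted as a genuine discrete maximal operator acting on the sequence of local masses $\big(\|f_n\chi_{_{Q_m}}\|_1\big)_m$, whose vector-valued boundedness on $\ell^p(\mathbb{Z}^d)$ is precisely what makes the summation over $k$ converge. (Alternatively, one could deduce the inequality from Rubio de Francia extrapolation, using that $\mathfrak{M}$ is bounded on $(L^q,\ell^p)$ and on its associate space $(L^{q'},\ell^{p'})$, but the hands-on argument above seems more transparent.)
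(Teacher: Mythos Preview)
The paper does not prove this proposition: it is simply quoted, with the citation [LSUYY], Proposition 11.12, and no argument is given. So there is no proof in the paper to compare against.

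Your argument is correct and follows a standard route for vector-valued maximal inequalities on amalgam spaces: localize to the lattice cubes $Q_k$, split the averaging radius at the unit scale, control the small-radius part by the continuous Fefferman--Stein inequality on $L^q(\mathbb{R}^d)$, and control the large-radius part by the discrete Fefferman--Stein inequality for $\mathfrak{M}_{\mathbb{Z}^d}$ on $\ell^p(\mathbb{Z}^d)$ applied to the local masses $\{\|f_n\chi_{Q_m}\|_1\}_m$. Two small remarks: the inclusion $B(x,r)\subset 5Q_k$ for $x\in Q_k$, $r\le1$ is fine (even $3Q_k$ works, since $|y-c_k|_\infty\le r+\tfrac12\le\tfrac32$); and the bounded-overlap estimate $\big\|\{\|g\chi_{5Q_k}\|_q\}_k\big\|_{\ell^p}\lsim\|g\|_{q,p}$ uses both $q\ge1$ (triangle inequality in $L^q$ over the finitely many grid cubes meeting $5Q_k$) and $p\ge1$ (convexity in the $\ell^p$ sum), both of which are available here. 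The extrapolation alternative you mention is indeed another valid approach, and is in fact closer to how the cited reference \cite{LSUYY} organizes such results.
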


\subsection{Hardy-amalgam spaces $\mathcal{H}^{(q,p)}$ and $\mathcal{H}_{\mathrm{loc}}^{(q,p)}$} 

Let $0<q,p<+\infty$. The Hardy-amalgam spaces $\mathcal{H}^{(q,p)}$ and $\mathcal{H}_{\mathrm{loc}}^{(q,p)}$ are Banach spaces whenever $1\leq q,p<+\infty$ and quasi-Banach spaces otherwise (see \cite{AbFt}, Proposition 3.8). Moreover, for $0<q\leq 1$ and $q\leq p<+\infty$, we obtained an atomic decomposition for these spaces, more precisely, reconstruction and decomposition theorems (see Theorems 4.3, 4.4 and 4.6 in \cite{AbFt}, and Theorems 3.2, 3.3 and 3.9 in \cite{AbFt2}) by using the following definition of an atom which is also the one of an atom for $\mathcal{H}^{q}$.

\begin{defn}\label{defhqpatom}
Let $1<r\leq+\infty$ and $s\geq\left\lfloor d\left(\frac{1}{q}-1\right)\right\rfloor$ be an integer. A function $\textbf{a}$ is a $(q,r,s)$-atom on $\mathbb{R}^d$ for $\mathcal{H}^{(q,p)}$ if there exists a cube $Q$ such that  
\begin{enumerate}
\item $\text{supp}(\textbf{a})\subset Q$;
\item $\left\|\textbf{a}\right\|_r\leq|Q|^{\frac{1}{r}-\frac{1}{q}}$; \label{defratom1}
\item $\int_{\mathbb{R}^d}x^{\beta}\textbf{a}(x)dx=0$, for all multi-indexes $\beta$ with $|\beta|\leq s$. \label{defratom2}
\end{enumerate}
\end{defn}

For $\mathcal{H}_{\mathrm{loc}}^{(q,p)}$ spaces, Condition (\ref{defratom2}) is not requiered when $|Q|\geq1$. We denote by $\mathcal{A}(q,r,s)$ the set of all $(\textbf{a},Q)$ such that $\textbf{a}$ is a $(q,r,s)$-atom and $Q$ is the associated cube (with respect to Definition \ref{defhqpatom}), and $\mathcal{A}_{\mathrm{loc}}(q,r,s)$ for the local version $\mathcal{H}_{\mathrm{loc}}^{(q,p)}$. 

Let $1<r\leq+\infty$ and $\delta\geq\left\lfloor d\left(\frac{1}{q}-1\right)\right\rfloor$ be an integer. For simplicity, we denote by $\mathcal{H}_{fin}^{(q,p)}$ and $\mathcal{H}_{\mathrm{loc},fin}^{(q,p)}$ respectively the subspaces of $\mathcal{H}^{(q,p)}$ and $\mathcal{H}_{\mathrm{loc}}^{(q,p)}$ consisting of finite linear combinations of $(q,r,\delta)$-atoms. Then, $\mathcal{H}_{fin}^{(q,p)}$ and $\mathcal{H}_{\mathrm{loc},fin}^{(q,p)}$ are respectively dense subspaces of $\mathcal{H}^{(q,p)}$ and $\mathcal{H}_{\mathrm{loc}}^{(q,p)}$ under the quasi-norms $\left\|\cdot\right\|_{\mathcal{H}^{(q,p)}}$ and $\left\|\cdot\right\|_{\mathcal{H}_{\mathrm{loc}}^{(q,p)}}$ (see Remark 4.7 in \cite{AbFt} and Remark 3.12 in \cite{AbFt2}).

\subsection{Campanato spaces and duality for $\mathcal{H}^{(q,p)}$ and $\mathcal{H}_{\mathrm{loc}}^{(q,p)}$} 

For $0<q\leq1$ and $0<p<+\infty$, we consider an integer $\delta\geq\left\lfloor d\left(\frac{1}{q}-1\right)\right\rfloor$.  
Let $1<r\leq+\infty$. We denote by $L_{\mathrm{comp}}^r(\mathbb{R}^d)$ the subspace of $L^r$- functions with compact support. We set\\ 
$L_{\mathrm{comp}}^{r,\delta}(\mathbb{R}^d):=\left\{f\in L_{\mathrm{comp}}^r(\mathbb{R}^d):\ \int_{\mathbb{R}^d}f(x)x^{\beta}dx=0,\ |\beta|\leq\delta\right\}$ and, for all cube $Q$,\\ $L^{r,\delta}(Q):=\left\{f\in L^r(Q):\ \int_{Q}f(x)x^{\beta}dx=0,\ |\beta|\leq\delta\right\}$, where $L^r(Q)$ stands for the subspace of $L^r$-functions supported in $Q$. Also, $L^r(Q)$ is endowed with the norm $\left\|\cdot\right\|_{L^r(Q)}$ defined by $\left\|f\right\|_{L^r(Q)}:=\left(\int_{Q}|f(x)|^rdx\right)^{\frac{1}{r}}$, with the usual modification when $r=+\infty$.
 We point out that $L_{\mathrm{comp}}^{r,\delta}(\mathbb{R}^d)\\=\mathcal{H}_{fin}^{(q,p)}$, when $0<q\leq1$ and $0<q\leq p<+\infty$, where $\mathcal{H}_{fin}^{(q,p)}$ is the subspace of $\mathcal{H}^{(q,p)}$ consisting of finite linear combinations of $(q,r,\delta)$-atoms. 

Let $f\in L_{\mathrm{loc}}^1$ and $Q$ be a cube. Then, there exists a unique  polynomial of $\mathcal{P_{\delta}}$ ($\mathcal{P_{\delta}}:=\mathcal{P_{\delta}}(\mathbb{R}^d)$ is the space of polynomial functions of degree at most $\delta$) that we denote by $P_Q^{\delta}(f)$ such that, for all $\mathfrak{q}\in\mathcal{P_{\delta}}$,
\begin{eqnarray}
\int_{Q}\left[f(x)-P_Q^{\delta}(f)(x)\right]\mathfrak{q}(x)dx=0. \label{Campanato1}
\end{eqnarray} 
Moreover, we have $P_Q^{\delta}(f)=f$, if $f\in\mathcal{P_{\delta}}$.   

\begin{remark} \label{0laremarquetile0}
Let $1\leq r\leq+\infty$ and $f\in L_{\mathrm{loc}}^r$. Then, for all $Q\in\mathcal{Q}$,    
\begin{eqnarray}
\sup_{x\in Q}|P_Q^{\delta}(f)(x)|\leq\frac{C}{|Q|}\int_{Q}|f(x)|dx, \label{1Campanato3}
\end{eqnarray}
and
\begin{eqnarray}
\left(\int_{Q}|P_Q^{\delta}(f)(x)|^rdx\right)^{\frac{1}{r}}\leq C\left(\int_{Q}|f(x)|^rdx\right)^{\frac{1}{r}}, \label{3Campanato3}
\end{eqnarray}
where the constant $C>0$ does not depend on $Q$ and $f$, with the usual modification when $r=+\infty$. 
\end{remark}

For this remark, see \cite{SZLU}, Lemma 4.1.  

\begin{defn} [\cite{NEYS}, Definition 6.1] Let $1\leq r\leq+\infty$, a function $\phi: \mathcal{Q}\rightarrow (0,+\infty)$  and $f\in L_{\mathrm{loc}}^r$. One denotes 
\begin{equation}
\left\|f\right\|_{\mathcal{L}_{r,\phi,\delta}}:=\sup_{Q\in\mathcal{Q}}\frac{1}{\phi(Q)}\left(\frac{1}{|Q|}\int_{Q}\left|f(x)-P_Q^{\delta}(f)(x)\right|^rdx\right)^{\frac{1}{r}}, \label{Campanato2}
\end{equation}
when $r<+\infty$, and
\begin{eqnarray}
\left\|f\right\|_{\mathcal{L}_{r,\phi,\delta}}:=\sup_{Q\in\mathcal{Q}}\frac{1}{\phi(Q)}\left\|f-P_Q^{\delta}(f)\right\|_{L^{\infty}(Q)}, \label{Campanato3}
\end{eqnarray}
when $r=+\infty$. Then, the Campanato space $\mathcal{L}_{r,\phi,\delta}(\mathbb{R}^d)$ is defined to be the set of all $f\in L_{\mathrm{loc}}^r$ such that $\left\|f\right\|_{\mathcal{L}_{r,\phi,\delta}}<+\infty$. One considers elements in $\mathcal{L}_{r,\phi,\delta}(\mathbb{R}^d)$ modulo polynomials of degree $\delta$ so that $\mathcal{L}_{r,\phi,\delta}(\mathbb{R}^d)$ is a Banach space. When one writes $f\in\mathcal{L}_{r,\phi,\delta}(\mathbb{R}^d)$, then $f$ stands for the representative of $\left\{f+\mathfrak{q}:\ \mathfrak{q}\ \text{ is polynomial of degree}\ \delta\right\}$.  
\end{defn}

For simplicity, we just denote $\mathcal{L}_{r,\phi,\delta}(\mathbb{R}^d)$ by $\mathcal{L}_{r,\phi,\delta}$ and for any $T\in\left(\mathcal{H}^{(q,p)}\right)^{\ast}$ the topological dual of $\mathcal{H}^{(q,p)}$, we put 
$$\left\|T\right\|:=\left\|T\right\|_{\left(\mathcal{H}^{(q,p)}\right)^{\ast}}=\sup_{\underset{\left\|f\right\|_{\mathcal{H}^{(q,p)}}\leq 1}{f\in\mathcal{H}^{(q,p)}}}|T(f)|.$$ 

Also, following \cite{YDYS}, Definition 7.1, p. 51, we introduced the following definition in \cite{AbFt2}.

\begin{defn}\label{defindudualloc}  
Let $1\leq r\leq+\infty$ and $\phi: \mathcal{Q}\rightarrow (0,+\infty)$ be a function. The space $\mathcal{L}_{r,\phi,\delta}^{\mathrm{loc}}:=\mathcal{L}_{r,\phi,\delta}^{\mathrm{loc}}(\mathbb{R}^d)$ is the set of all $f\in L_{\mathrm{loc}}^r$ such that $\left\|f\right\|_{{\mathcal{L}}_{r,\phi,\delta}^{\mathrm{loc}}}<+\infty$, where 
\begin{eqnarray*}
\left\|f\right\|_{{\mathcal{L}}_{r,\phi,\delta}^{\mathrm{loc}}}:&=&\sup_{\underset{|Q|\geq1}{Q\in\mathcal{Q}}}\frac{1}{\phi(Q)}\left(\frac{1}{|Q|}\int_{Q}|f(x)|^rdx\right)^{\frac{1}{r}}\\
&+&\sup_{\underset{|Q|<1}{Q\in\mathcal{Q}}}\frac{1}{\phi(Q)}\left(\frac{1}{|Q|}\int_{Q}\left|f(x)-P_Q^{\delta}(f)(x)\right|^rdx\right)^{\frac{1}{r}},
\end{eqnarray*}
when $r<+\infty$, and
\begin{eqnarray*}
\left\|f\right\|_{\mathcal{L}_{r,\phi,\delta}^{\mathrm{loc}}}:=\sup_{\underset{|Q|\geq1}{Q\in\mathcal{Q}}}\frac{1}{\phi(Q)}\left\|f\right\|_{L^{\infty}(Q)}+\sup_{\underset{|Q|<1}{Q\in\mathcal{Q}}}\frac{1}{\phi(Q)}\left\|f-P_Q^{\delta}(f)\right\|_{L^{\infty}(Q)}, 
\end{eqnarray*}
when $r=+\infty$.
\end{defn} 

We note that $\left\|f\right\|_{{\mathcal{L}}_{r,\phi,\delta}}\leq (1+C)\left\|f\right\|_{{\mathcal{L}}_{r,\phi,\delta}^{\mathrm{loc}}}$, for any $f\in L_{\mathrm{loc}}^r$, where the constant $C>0$ is the one of Remark \ref{0laremarquetile0}. Therefore, ${\mathcal{L}}_{r,\phi,\delta}^{\mathrm{loc}}\hookrightarrow {\mathcal{L}}_{r,\phi,\delta}$, since $\left\|\cdot\right\|_{{\mathcal{L}}_{r,\phi,\delta}^{\mathrm{loc}}}$ defines a norm on ${\mathcal{L}}_{r,\phi,\delta}^{\mathrm{loc}}$. 

For all $T\in\left(\mathcal{H}_{\mathrm{loc}}^{(q,p)}\right)^{\ast}$ the topological dual of $\mathcal{H}_{\mathrm{loc}}^{(q,p)}$, we set $$\left\|T\right\|:=\left\|T\right\|_{\left(\mathcal{H}_{\mathrm{loc}}^{(q,p)}\right)^{\ast}}=\sup_{\underset{\left\|f\right\|_{\mathcal{H}_{\mathrm{loc}}^{(q,p)}}\leq 1}{f\in\mathcal{H}_{\mathrm{loc}}^{(q,p)}}}|T(f)|.$$  

We consider the functions $\phi_{1},\ \phi_{2}\ \text{and}\ \phi_{3}:\mathcal{Q}\rightarrow(0,+\infty)$ defined by 
\begin{equation}
\phi_{1}(Q)=\frac{\left\|\chi_{_{Q}}\right\|_{q,p}}{|Q|}\ ,\ \phi_{2}(Q)=\frac{\left\|\chi_{_{Q}}\right\|_q}{|Q|}\ \text{ and }\ \phi_{3}(Q)=\frac{\left\|\chi_{_{Q}}\right\|_p}{|Q|}\ , 
\label{Campanatonolocnloc}
\end{equation} 
for all $Q\in\mathcal{Q}$, whenever $0<q\leq 1$ and $0<p<+\infty$. In \cite{AbFt1} and \cite{AbFt2}, we obtained the following duality results for $\mathcal{H}^{(q,p)}$ and $\mathcal{H}_{\mathrm{loc}}^{(q,p)}$, when $0<q\leq p\leq 1$. 

\begin{thm} [\cite{AbFt1}, Theorem 3.3] \label{theoremdual}
Suppose that $0<q\leq p\leq 1$. Let $1<r\leq+\infty$. Then, $\left(\mathcal{H}^{(q,p)}\right)^{\ast}$ is isomorphic to $\mathcal{L}_{r',\phi_1,\delta}$ with equivalent norms, where $\frac{1}{r}+\frac{1}{r'}=1$. More precisely, we have the following assertions:  
\begin{enumerate}
\item Let $g\in\mathcal{L}_{r',\phi_1,\delta}$. Then, the mapping 
$$T_g:f\in\mathcal{H}_{fin}^{(q,p)}\longmapsto\int_{\mathbb{R}^d}g(x)f(x)dx,$$ where $\mathcal{H}_{fin}^{(q,p)}$ is the subspace of $\mathcal{H}^{(q,p)}$ consisting of finite linear combinations of $(q,r,\delta)$-atoms, extends to a unique continuous linear functional on $\mathcal{H}^{(q,p)}$ such that
\begin{eqnarray*}
\left\|T_g\right\|\leq C\left\|g\right\|_{\mathcal{L}_{r',\phi_{1},\delta}}, 
\end{eqnarray*}
where $C>0$ is a constant independent of $g$. \label{dualpoint1}
\item Conversely, for any $T\in\left(\mathcal{H}^{(q,p)}\right)^{\ast}$, there exists $g\in\mathcal{L}_{r',\phi_1,\delta}$ such that $$T(f)=\int_{\mathbb{R}^d}g(x)f(x)dx,\ \text{ for all }\ f\in\mathcal{H}_{fin}^{(q,p)},$$ and 
\begin{eqnarray*}
\left\|g\right\|_{\mathcal{L}_{r',\phi_{1},\delta}}\leq C\left\|T\right\|, 
\end{eqnarray*}
where $C>0$ is a constant independent of $T$. \label{dualpoint2}
\end{enumerate}
\end{thm}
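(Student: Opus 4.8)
The plan is to prove the two directions separately, both resting on the atomic decomposition of $\mathcal{H}^{(q,p)}$ recalled in Section 2 (Theorems 4.3, 4.4, 4.6 of \cite{AbFt}) together with the fact that $\mathcal{H}_{fin}^{(q,p)}$ is dense in $\mathcal{H}^{(q,p)}$. For part (\ref{dualpoint1}), given $g\in\mathcal{L}_{r',\phi_1,\delta}$, I would first show that $T_g$ is well defined on a single $(q,r,\delta)$-atom $\mathbf a$ supported on a cube $Q$: using the vanishing moments of $\mathbf a$ one replaces $g$ by $g-P_Q^{\delta}(g)$ in the integral, then applies Hölder's inequality with exponents $r$ and $r'$ and the size estimate $\|\mathbf a\|_r\le|Q|^{1/r-1/q}$ to get
\begin{eqnarray*}
|T_g(\mathbf a)|\le\|\mathbf a\|_r\left(\int_Q|g-P_Q^{\delta}(g)|^{r'}\right)^{1/r'}\le|Q|^{1/r-1/q}|Q|^{1/r'}\phi_1(Q)\|g\|_{\mathcal{L}_{r',\phi_1,\delta}}=\|\chi_{_Q}\|_{q,p}\,\|g\|_{\mathcal{L}_{r',\phi_1,\delta}},
\end{eqnarray*}
since $\phi_1(Q)=\|\chi_{_Q}\|_{q,p}/|Q|$ and $1/r+1/r'=1$. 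For a finite combination $f=\sum_j\lambda_j\mathbf a_j$ one sums these bounds and invokes the atomic quasi-norm control, which in the range $0<q\le p\le1$ is exactly where Proposition \ref{InverseHoldMink3} (the reverse Minkowski inequality in $(L^q,\ell^p)$) enters, to obtain $|T_g(f)|\lesssim\|g\|_{\mathcal{L}_{r',\phi_1,\delta}}\|f\|_{\mathcal{H}^{(q,p)}}$; density then extends $T_g$ uniquely to all of $\mathcal{H}^{(q,p)}$ with the stated norm bound.

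For the converse (\ref{dualpoint2}), given $T\in(\mathcal{H}^{(q,p)})^\ast$, I would fix a large cube $Q$ and observe that any $h\in L^{r,\delta}(Q)$, after normalization, is a multiple of a $(q,r,\delta)$-atom: precisely $\|h\|_r^{-1}|Q|^{1/r-1/q}h$ is an atom on $Q$, so $\|h\|_{\mathcal{H}^{(q,p)}}\lesssim\|\chi_{_Q}\|_{q,p}|Q|^{1/q-1/r}\|h\|_r$. Hence $T$ restricts to a bounded linear functional on $L^{r,\delta}(Q)$ (with $r<\infty$; the case $r=\infty$ is handled by the usual $L^1(Q)$-duality argument), and by the Riesz representation theorem on $L^{r'}$ there is $g_Q\in L^{r'}(Q)$, unique modulo $\mathcal{P}_\delta$, representing $T$ on $L^{r,\delta}(Q)$. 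Exhausting $\mathbb{R}^d$ by an increasing sequence of cubes $Q^{(1)}\subset Q^{(2)}\subset\cdots$ and using uniqueness modulo polynomials, the local representatives are compatible and patch into a single $g\in L_{\mathrm{loc}}^{r'}$ with $T(f)=\int g f$ for every $f\in\mathcal{H}_{fin}^{(q,p)}$. The norm estimate $\|g\|_{\mathcal{L}_{r',\phi_1,\delta}}\le C\|T\|$ follows by testing $T$ against atoms: for each cube $Q$ one chooses $h\in L^{r',\delta}(Q)$ with $\|h\|_{r'}=1$ nearly attaining $\|g-P_Q^{\delta}(g)\|_{L^{r'}(Q)}$ (duality of $L^{r'}(Q)$ with $L^{r}(Q)$, using the vanishing-moment subspace), converts $h$ into an atom, and reads off the Campanato seminorm from $|T(h)|\le\|T\|\|h\|_{\mathcal{H}^{(q,p)}}$.

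The main obstacle I expect is the passage from the estimate on a single atom to the estimate on a general element of $\mathcal{H}^{(q,p)}$, i.e. controlling $\sum_j|\lambda_j|$-type quantities by $\|f\|_{\mathcal{H}^{(q,p)}}$ in the quasi-Banach range $q\le1$; this is delicate because the naive triangle inequality fails for $q<1$, and the correct substitute is the reverse Minkowski inequality of Proposition \ref{InverseHoldMink3} applied to the $\ell^q$-normalized atomic sums appearing in the decomposition theorem, combined with the observation that the sequence $\{\lambda_j\chi_{_{Q_j}}/\|\chi_{_{Q_j}}\|_{q,p}\}_j$ has $(L^q,\ell^p)$-quasi-norm comparable to $\|f\|_{\mathcal{H}^{(q,p)}}$. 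A secondary technical point is the gluing of the local representatives $g_Q$ into a global $g$, where one must be careful that the "modulo $\mathcal{P}_\delta$" ambiguity is resolved consistently along the exhaustion; this is routine but must be done with the fixed normalization convention stated after Definition \ref{defindudualloc}. The case $r=+\infty$ throughout requires the usual separate treatment via $L^1$-duality, which I would dispatch with a remark rather than a repetition of the argument.
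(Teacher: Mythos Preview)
The paper does not prove this theorem: it is quoted from \cite{AbFt1} (Theorem 3.3 there) as a prerequisite in Section 2.3, with no argument given here. Your plan is the standard atomic--Campanato duality scheme and matches both the proof in \cite{AbFt1} and the skeleton of the proof of Theorem \ref{theoremdualqp} in the present paper, which for its part (\ref{dualpointqp2}) explicitly invokes ``the proof of Theorem \ref{theoremdual}''.

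Two small slips to fix. First, your single-atom bound drops a factor of $|Q|^{-1/q}$: the correct chain is
\[
|T_g(\mathbf a)|\le |Q|^{1/r-1/q}|Q|^{1/r'}\phi_1(Q)\,\|g\|_{\mathcal{L}_{r',\phi_1,\delta}}=\frac{\|\chi_{_Q}\|_{q,p}}{\|\chi_{_Q}\|_q}\,\|g\|_{\mathcal{L}_{r',\phi_1,\delta}},
\]
and it is precisely the sum $\sum_j|\lambda_j|\,\|\chi_{_{Q_j}}\|_{q,p}/\|\chi_{_{Q_j}}\|_q$ that Proposition \ref{InverseHoldMink3} bounds by $\big\|\sum_j(|\lambda_j|/\|\chi_{_{Q_j}}\|_q)\chi_{_{Q_j}}\big\|_{q,p}\lesssim\|f\|_{\mathcal{H}^{(q,p)}}$ (using the specific atomic decomposition of $f$ from \cite{AbFt}, not an arbitrary finite representation). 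Second, in the converse the dualizing function $h$ nearly attaining $\|g-P_Q^\delta(g)\|_{L^{r'}(Q)}$ must be taken in $L^r(Q)$ with $\|h\|_{L^r(Q)}=1$, not in $L^{r',\delta}(Q)$; you then pass to $\big(h-P_Q^\delta(h)\big)\chi_{_Q}\in L^{r,\delta}(Q)$, whose $L^r$-norm is at most $1+C$ by Remark \ref{0laremarquetile0}, to build the testing atom --- exactly as done in the proof of Theorem \ref{theoremdualqp}.
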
 

\begin{thm}[\cite{AbFt2}, Theorem 4.3] \label{theoremdualloc}
Suppose that $0<q\leq p\leq 1$. Let $1<r\leq+\infty$. Then, $\left(\mathcal{H}_{\mathrm{loc}}^{(q,p)}\right)^{\ast}$ is isomorphic to $\mathcal{L}_{r',\phi_1,\delta}^{\mathrm{loc}}$ with equivalent norms, where $\frac{1}{r}+\frac{1}{r'}=1$. More precisely, we have the following assertions: 
\begin{enumerate}
\item Let $g\in\mathcal{L}_{r',\phi_1,\delta}^{\mathrm{loc}}$. Then, the mapping
$$T_g:f\in\mathcal{H}_{\mathrm{loc},fin}^{(q,p)}\longmapsto\int_{\mathbb{R}^d}g(x)f(x)dx,$$ where $\mathcal{H}_{\mathrm{loc},fin}^{(q,p)}$ is the subspace of $\mathcal{H}_{\mathrm{loc}}^{(q,p)}$ consisting of finite linear combinations of $(q,r,\delta)$-atoms, extends to a unique continuous linear functional on $\mathcal{H}_{\mathrm{loc}}^{(q,p)}$ such that
\begin{eqnarray*}
\left\|T_g\right\|\leq C\left\|g\right\|_{\mathcal{L}_{r',\phi_{1},\delta}^{\mathrm{loc}}}, 
\end{eqnarray*}
 where $C>0$ is a constant independent of $g$. \label{1theoremdualloc}
\item Conversely, for any $T\in\left(\mathcal{H}_{\mathrm{loc}}^{(q,p)}\right)^{\ast}$, there exists $g\in\mathcal{L}_{r',\phi_1,\delta}^{\mathrm{loc}}$ such that $$T(f)=\int_{\mathbb{R}^d}g(x)f(x)dx,\ \text{ for all }\ f\in\mathcal{H}_{\mathrm{loc},fin}^{(q,p)}\ ,$$
and 
\begin{eqnarray*}
\left\|g\right\|_{\mathcal{L}_{r',\phi_{1},\delta}^{\mathrm{loc}}}\leq C\left\|T\right\|, 
\end{eqnarray*}
 where $C>0$ is a constant independent of $T$. \label{2theoremdualloc}
\end{enumerate}
\end{thm}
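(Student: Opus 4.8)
The plan is to establish the isomorphism by proving the two assertions of the statement separately, in both cases routing everything through the atomic decomposition of $\mathcal{H}_{\mathrm{loc}}^{(q,p)}$ (Theorems 3.2, 3.3 and 3.9 of \cite{AbFt2}) and the density of $\mathcal{H}_{\mathrm{loc},fin}^{(q,p)}$ in $\mathcal{H}_{\mathrm{loc}}^{(q,p)}$. A preliminary remark used throughout is that, since $q\leq p$, one has $\|\chi_{_{Q}}\|_{q,p}\leq|Q|^{1/q}$ for every cube $Q$ (as $p/q\geq1$), and that a single $(q,r,\delta)$-atom $\textbf{a}$ on a cube $Q$ satisfies $\|\textbf{a}\|_{\mathcal{H}_{\mathrm{loc}}^{(q,p)}}\lesssim\|\chi_{_{Q}}\|_{q,p}/|Q|^{1/q}$, a one-term instance of the reconstruction theorem that is valid for moment-free atoms on large cubes as well.

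For the first assertion, fix $g\in\mathcal{L}_{r',\phi_1,\delta}^{\mathrm{loc}}$. Since every $f\in\mathcal{H}_{\mathrm{loc},fin}^{(q,p)}$ is a genuine compactly supported $L^r$ function, $T_g(f)=\int_{\mathbb{R}^d}gf$ is unambiguously defined and it suffices to bound it. The heart of the matter is a pairing estimate on a single $(q,r,\delta)$-atom $\textbf{a}$ with cube $Q$: if $|Q|<1$, the vanishing moments let me write $\int g\textbf{a}=\int(g-P_Q^{\delta}(g))\textbf{a}$, and H\"older's inequality with exponents $r,r'$ together with the second supremum in $\|\cdot\|_{\mathcal{L}_{r',\phi_1,\delta}^{\mathrm{loc}}}$ give $\big|\int g\textbf{a}\big|\leq|Q|^{1/r'}\phi_1(Q)\|g\|_{\mathcal{L}_{r',\phi_1,\delta}^{\mathrm{loc}}}\,|Q|^{1/r-1/q}=\|\chi_{_{Q}}\|_{q,p}|Q|^{-1/q}\,\|g\|_{\mathcal{L}_{r',\phi_1,\delta}^{\mathrm{loc}}}$; if $|Q|\geq1$ no moments are available but none are needed, and the same computation with $g$ in place of $g-P_Q^{\delta}(g)$, now using the first supremum of $\|\cdot\|_{\mathcal{L}_{r',\phi_1,\delta}^{\mathrm{loc}}}$, gives the very same bound --- this is precisely the role of the two-part definition of $\mathcal{L}_{r',\phi_1,\delta}^{\mathrm{loc}}$. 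Writing $f=\sum_j\lambda_j\textbf{a}_j$ and summing, $|T_g(f)|\leq\|g\|_{\mathcal{L}_{r',\phi_1,\delta}^{\mathrm{loc}}}\sum_j\big\|\,|\lambda_j|\,|Q_j|^{-1/q}\chi_{_{Q_j}}\big\|_{q,p}$; I then pull the sum inside the quasi-norm via the reverse Minkowski inequality (Proposition \ref{InverseHoldMink3}, which requires $0<q\leq1$ and $p\leq1$) and use the elementary inequality $(\sum_j t_j)^q\leq\sum_j t_j^q$ to reach $|T_g(f)|\lesssim\|g\|_{\mathcal{L}_{r',\phi_1,\delta}^{\mathrm{loc}}}\big\|(\sum_j|\lambda_j|^q|Q_j|^{-1}\chi_{_{Q_j}})^{1/q}\big\|_{q,p}$; taking the infimum over atomic decompositions of $f$, and invoking the decomposition theorem together with $\|\chi_{_{Q}}\|_{q,p}\leq|Q|^{1/q}$, I obtain $|T_g(f)|\lesssim\|g\|_{\mathcal{L}_{r',\phi_1,\delta}^{\mathrm{loc}}}\|f\|_{\mathcal{H}_{\mathrm{loc}}^{(q,p)}}$, so $T_g$ extends uniquely to $\left(\mathcal{H}_{\mathrm{loc}}^{(q,p)}\right)^{\ast}$ with the claimed norm control.

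For the converse, let $T\in\left(\mathcal{H}_{\mathrm{loc}}^{(q,p)}\right)^{\ast}$. The point is that every Lebesgue-type space embeds into $\mathcal{H}_{\mathrm{loc}}^{(q,p)}$: for a cube $Q$ and $h\in L^r(Q)$ (when $|Q|\geq1$) or $h\in L^{r,\delta}(Q)$ (when $|Q|<1$), the normalization $h/(\|h\|_r|Q|^{1/q-1/r})$ is a $(q,r,\delta)$-atom (moment-free if $|Q|\geq1$), hence $\|h\|_{\mathcal{H}_{\mathrm{loc}}^{(q,p)}}\lesssim\|h\|_r\,\|\chi_{_{Q}}\|_{q,p}|Q|^{-1/r}$. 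Thus $T$ restricts to a bounded functional of norm $\lesssim\|T\|\,\|\chi_{_{Q}}\|_{q,p}|Q|^{-1/r}$ there, and $L^r$--$L^{r'}$ duality (for $1<r<\infty$; the case $r=\infty$ being reduced to a finite value of $r$ via the independence of $\mathcal{L}_{r',\phi_1,\delta}^{\mathrm{loc}}$ on $r$) produces $g_Q\in L^{r'}(Q)$ representing $T$ --- on all of $L^r(Q)$ and uniquely when $|Q|\geq1$, and on $L^{r,\delta}(Q)$ modulo $\mathcal{P}_{\delta}$ when $|Q|<1$. Taking an increasing sequence of cubes of measure $\geq1$ exhausting $\mathbb{R}^d$, the corresponding $g_Q$ are pairwise consistent by uniqueness and patch to a single $g\in L_{\mathrm{loc}}^{r'}$; since any $f\in\mathcal{H}_{\mathrm{loc},fin}^{(q,p)}$ lies in $L^r(Q)$ for $Q$ large, $T(f)=\int_{\mathbb{R}^d}gf$. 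To estimate the norm: for $|Q|\geq1$, $\frac{1}{\phi_1(Q)}\big(\frac{1}{|Q|}\int_Q|g|^{r'}\big)^{1/r'}=\frac{|Q|^{1-1/r'}}{\|\chi_{_{Q}}\|_{q,p}}\|g\|_{L^{r'}(Q)}\lesssim\|T\|$; for $|Q|<1$, the norm of $h\mapsto\int_Q gh$ on the closed subspace $L^{r,\delta}(Q)$ equals $\inf_{P\in\mathcal{P}_{\delta}}\|g-P\|_{L^{r'}(Q)}$, which by $(\ref{3Campanato3})$ of Remark \ref{0laremarquetile0} is comparable to $\|g-P_Q^{\delta}(g)\|_{L^{r'}(Q)}$, and the same scaling yields $\frac{1}{\phi_1(Q)}\big(\frac{1}{|Q|}\int_Q|g-P_Q^{\delta}(g)|^{r'}\big)^{1/r'}\lesssim\|T\|$. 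Adding the two suprema, $\|g\|_{\mathcal{L}_{r',\phi_1,\delta}^{\mathrm{loc}}}\lesssim\|T\|$.

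Putting the two steps together, $g\mapsto T_g$ is a bounded surjection $\mathcal{L}_{r',\phi_1,\delta}^{\mathrm{loc}}\to\left(\mathcal{H}_{\mathrm{loc}}^{(q,p)}\right)^{\ast}$, and it is injective because $T_g=0$ forces $\int_Q gh=0$ for all $h\in L^r(Q)$ with $|Q|\geq1$, hence $g=0$; combined with the two-sided estimates this is the asserted isomorphism with equivalent norms. I expect the main obstacle to be the summation step in the first assertion: converting the crude per-atom bound into one controlled by $\|f\|_{\mathcal{H}_{\mathrm{loc}}^{(q,p)}}$ hinges on the reverse Minkowski inequality, and it is exactly there that the hypothesis $p\leq1$ (together with $q\leq1$) is indispensable. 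A secondary, genuinely local, difficulty is to keep track throughout the converse of the split between small cubes (where one argues modulo $\mathcal{P}_{\delta}$, as in the $\mathcal{H}^{(q,p)}$ case) and large cubes (where moment-free atoms force honest $L^{r'}$-control of $g$, with no polynomial ambiguity): it is this split that both makes the patching of the local representatives unambiguous and forces the two-part shape of the target space $\mathcal{L}_{r',\phi_1,\delta}^{\mathrm{loc}}$.
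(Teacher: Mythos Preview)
This theorem is not proved in the present paper; it is quoted from \cite{AbFt2} as a prerequisite (note the label ``[\cite{AbFt2}, Theorem 4.3]''), so there is no in-paper argument to compare against directly. Your outline is the standard one and matches, step for step, the strategy the paper \emph{does} carry out in the closely related Theorem~\ref{theoremdualqploc} (the case $1<p$): in Part~(1) one splits into small cubes ($|\widetilde{Q}|<1$, vanishing moments available) and large cubes ($|\widetilde{Q}|\geq1$, no moments needed) to get the per-atom bound $\big|\int g\,\mathbf{a}\big|\leq\|\chi_{_{Q}}\|_{q,p}|Q|^{-1/q}\|g\|_{\mathcal{L}_{r',\phi_1,\delta}^{\mathrm{loc}}}$, then sums; in Part~(2) one produces $g$ by $L^r$--$L^{r'}$ duality on cubes, patches via an exhausting sequence of large cubes (where there is no polynomial ambiguity), and reads off the two suprema defining $\|g\|_{\mathcal{L}_{r',\phi_1,\delta}^{\mathrm{loc}}}$. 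Your Part~(2) is correct as written.

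One point in Part~(1) needs tightening. After reverse Minkowski you arrive at $|T_g(f)|\lesssim\|g\|\,\big\|(\sum_j|\lambda_j|^q|Q_j|^{-1}\chi_{_{Q_j}})^{1/q}\big\|_{q,p}$ for a \emph{given} finite representation $f=\sum_j\lambda_j\mathbf{a}_j$, and then ``take the infimum over atomic decompositions''. The infimum over \emph{finite} representations is a priori larger than the infimum over all admissible (possibly infinite) ones, so this step implicitly uses that the finite atomic quasi-norm agrees with the full one on $\mathcal{H}_{\mathrm{loc},fin}^{(q,p)}$, which is a separate (true but nontrivial) fact. The way the paper handles the analogous step in Theorem~\ref{theoremdualqploc} is cleaner and avoids this: one applies the constructive decomposition theorem (Theorems~3.2/3.9 of \cite{AbFt2}) to the given $f$ to obtain a \emph{specific} (generally infinite) decomposition $f=\sum_{j,k}\lambda_{j,k}a_{j,k}$ satisfying $\big\|\sum_{j,k}(|\lambda_{j,k}|/\|\chi_{\widetilde{Q^{j,k}}}\|_q)^{\eta}\chi_{\widetilde{Q^{j,k}}}\big\|_{q/\eta,p/\eta}^{1/\eta}\approx\|f\|_{\mathcal{H}_{\mathrm{loc}}^{(q,p)}}$, and justifies the interchange $\int gf=\sum_{j,k}\lambda_{j,k}\int g\,a_{j,k}$ by the embedding $\mathcal{L}_{r',\phi_1,\delta}^{\mathrm{loc}}\hookrightarrow\mathcal{L}_{r',\phi_2,\delta}^{\mathrm{loc}}\cong(\mathcal{H}_{\mathrm{loc}}^q)^\ast$ together with the (unconditional) convergence of the decomposition in $\mathcal{H}_{\mathrm{loc}}^q$. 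With that adjustment your proof is complete and coincides with the intended one.
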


\section{Duality $\mathcal{H}^{(q,p)}-\mathcal{L}_{r,\phi_{1},\delta}^{(q,p,\eta)}$}

In this section, we give a characterization of the topological dual of $\mathcal{H}^{(q,p)}$, when $0<q\leq1<p<+\infty$. The idea of this characterization follows from the approach developed by Wael Abu-Shammala in his thesis \cite{AT} (see p. 32). Furthermore, we obtain a unified characterization of the dual of $\mathcal{H}^{(q,p)}$ for $0<q\leq1$ and $q\leq p<+\infty$.\\ 

Let $1\leq r<+\infty$, $g$ be a function in $L_{\mathrm{loc}}^r$ and $\Omega$ be an open subset such that $\Omega\neq\mathbb{R}^d$. We define $$\textit{O}(g,\Omega,r):=\sup{\sum_{n\geq0}|\widetilde{Q^n}|^{\frac{1}{r'}}\left(\int_{\widetilde{Q^n}}\left|g(x)-P_{\widetilde{Q^n}}^{\delta}(g)(x)\right|^{r}dx\right)^{\frac{1}{r}}},$$ where $\frac{1}{r}+\frac{1}{r'}=1$ and the supremum is taken over all families of cubes $\left\{Q^n\right\}_{n\geq0}$ such that $Q^n\subset\Omega$, for all $n\geq0$ and $\sum_{n\geq0}\chi_{_{Q^n}}\leq K(d)$, with $\widetilde{Q^n}=C_0Q^n$, where $K(d)>0$ and $C_0>1$ are some constants independent of $\Omega$ and $\left\{Q^n\right\}_{n\geq0}$.  

Given an open cube $Q$, we consider the family of cubes reduced to the cube $C_0^{-1}Q$; namely $\left\{C_0^{-1}Q\right\}$. Then, $C_0^{-1}Q\subset Q$ and $\chi_{C_0^{-1}Q}<K(d)$, with $\widetilde{Q}:=C_0(C_0^{-1}Q)=Q$, and so, by definition of $\textit{O}(g,Q,r)$, we have  
\begin{eqnarray*}
\textit{O}(g,Q,r)&\geq&|Q|^{\frac{1}{r'}}\left(\int_{Q}\left|g(x)-P_{Q}^{\delta}(g)(x)\right|^{r}dx\right)^{\frac{1}{r}}\\
&=&|Q|\left(\frac{1}{|Q|}\int_{Q}\left|g(x)-P_{Q}^{\delta}(g)(x)\right|^{r}dx\right)^{\frac{1}{r}}\\
&=&\left\|\chi_{Q}\right\|_{q,p}\left[\frac{1}{\phi_{1}(Q)}\left(\frac{1}{|Q|}\int_{Q}\left|g(x)-P_{Q}^{\delta}(g)(x)\right|^{r}dx\right)^{\frac{1}{r}}\right].
\end{eqnarray*}
Consequently,  
\begin{eqnarray*}
\left\|g\right\|_{\mathcal{L}_{r,\phi_{1},\delta}}&:=&\sup_{Q\in\mathcal{Q}}\frac{1}{\phi_{1}(Q)}\left(\frac{1}{|Q|}\int_{Q}\left|g(x)-P_Q^{\delta}(g)(x)\right|^rdx\right)^{\frac{1}{r}}\\
&=&\sup_{\underset{Q\text{ open}}{Q\in\mathcal{Q}}}\frac{1}{\phi_{1}(Q)}\left(\frac{1}{|Q|}\int_{Q}\left|g(x)-P_{Q}^{\delta}(g)(x)\right|^{r}dx\right)^{\frac{1}{r}}\leq\sup_{\underset{Q \text{ open}}{Q\in\mathcal{Q}}}\frac{\textit{O}(g,Q,r)}{\left\|\chi_{Q}\right\|_{q,p}}\cdot\ \ (\circ)\\
\end{eqnarray*}

\begin{defn}
Suppose that $0<q\leq1$ and $0<p<+\infty$. Let $0<\eta<+\infty$ and $1\leq r<+\infty$. We say that a function $g$ in $L_{\mathrm{loc}}^r$ belongs to $\mathcal{L}_{r,\phi_{1},\delta}^{(q,p,\eta)}:=\mathcal{L}_{r,\phi_{1},\delta}^{(q,p,\eta)}(\mathbb{R}^d)$ if there is a constante $C>0$ such that, for all families of open subsets $\left\{\Omega^j\right\}_{j\in\mathbb{Z}}$ with $\left\|\sum_{j\in\mathbb{Z}}2^{j\eta}\chi_{\Omega^j}\right\|_{\frac{q}{\eta},\frac{p}{\eta}}<+\infty$, we have 
\begin{eqnarray}
\sum_{j\in\mathbb{Z}}2^j\textit{O}(g,\Omega^j,r)\leq C\left\|\sum_{j\in\mathbb{Z}}2^{j\eta}\chi_{\Omega^j}\right\|_{\frac{q}{\eta},\frac{p}{\eta}}^{\frac{1}{\eta}}. \label{dualqp}
\end{eqnarray} 
\end{defn}

It is easy to verify that $\mathcal{L}_{r,\phi_{1},\delta}^{(q,p,\eta)}$ contains $\mathcal{P_{\delta}}:=\mathcal{P_{\delta}}(\mathbb{R}^d)$. Moreover, when $g\in\mathcal{L}_{r,\phi_{1},\delta}^{(q,p,\eta)}$ , we define $\left\|g\right\|_{\mathcal{L}_{r,\phi_{1},\delta}^{(q,p,\eta)}}:=\inf\left\{C>0:\ C \text{ satisfies } (\ref{dualqp})\right\}$. It is also easy to see that $\left\|\cdot\right\|_{\mathcal{L}_{r,\phi_{1},\delta}^{(q,p,\eta)}}$ defines a semi-norm on $\mathcal{L}_{r,\phi_{1},\delta}^{(q,p,\eta)}$ and a norm on $\mathcal{L}_{r,\phi_{1},\delta}^{(q,p,\eta)}/\mathcal{P_{\delta}}$. So, we consider elements in $\mathcal{L}_{r,\phi_{1},\delta}^{(q,p,\eta)}$ modulo polynomials of degree $\delta$ so that $\mathcal{L}_{r,\phi_{1},\delta}^{(q,p,\eta)}$ is a Banach space. Thus, when we write $g\in\mathcal{L}_{r,\phi_{1},\delta}^{(q,p,\eta)}$, then $g$ stands for the representative of $\left\{g+\mathfrak{q}:\ \mathfrak{q}\ \text{ is polynomial of degree}\ \delta\right\}$.

We denote by $\mathcal{O}_{\mathbb{Z},q,p,\eta}$ the set of all families of open subsets $\left\{\Omega^j\right\}_{j\in\mathbb{Z}}$ such that $\left\|\sum_{j\in\mathbb{Z}}2^{j\eta}\chi_{\Omega^j}\right\|_{\frac{q}{\eta},\frac{p}{\eta}}<+\infty$. From the definition of $\left\|g\right\|_{\mathcal{L}_{r,\phi_{1},\delta}^{(q,p,\eta)}}$ , it follows that, for all families of open subsets $\left\{\Omega^j\right\}_{j\in\mathbb{Z}}$ of $\mathcal{O}_{\mathbb{Z},q,p,\eta}$, 
\begin{eqnarray}
\sum_{j\in\mathbb{Z}}2^j\textit{O}(g,\Omega^j,r)\leq\left\|g\right\|_{\mathcal{L}_{r,\phi_{1},\delta}^{(q,p,\eta)}}\left\|\sum_{j\in\mathbb{Z}}2^{j\eta}\chi_{\Omega^j}\right\|_{\frac{q}{\eta},\frac{p}{\eta}}^{\frac{1}{\eta}}. \label{dualqp2}
\end{eqnarray}
Thus, if $g\in\mathcal{L}_{r,\phi_{1},\delta}^{(q,p,\eta)}$ , with $0<q\leq1$ and $q\leq p<+\infty$, then 
\begin{eqnarray}
\left\|g\right\|_{\mathcal{L}_{r,\phi_{2},\delta}}\leq\left\|g\right\|_{\mathcal{L}_{r,\phi_{1},\delta}}\leq\left\|g\right\|_{\mathcal{L}_{r,\phi_{1},\delta}^{(q,p,\eta)}}, \label{dualqp3}
\end{eqnarray}
and so $\mathcal{L}_{r,\phi_{1},\delta}^{(q,p,\eta)}\hookrightarrow\mathcal{L}_{r,\phi_{1},\delta}\hookrightarrow\mathcal{L}_{r,\phi_{2},\delta}$. Indeed, we have   
\begin{eqnarray*}
\left\|g\right\|_{\mathcal{L}_{r,\phi_{2},\delta}}\leq\left\|g\right\|_{\mathcal{L}_{r,\phi_{1},\delta}}&\leq&\sup_{\underset{Q \text{ opent}}{Q\in\mathcal{Q}}}\frac{\textit{O}(g,Q,r)}{\left\|\chi_{Q}\right\|_{q,p}}\leq\left\|g\right\|_{\mathcal{L}_{r,\phi_{1},\delta}^{(q,p,\eta)}},
\end{eqnarray*}
by $(\circ)$ and the fact that, for any open cube $Q$, we have $\textit{O}(g,Q,r)\leq\left\|g\right\|_{\mathcal{L}_{r,\phi_{1},\delta}^{(q,p,\eta)}}\left\|\chi_{Q}\right\|_{q,p}$. 

We point out the inequality on the right in (\ref{dualqp3}) holds for $0<q\leq1$ and $0<p<+\infty$; the assumption $q\leq p<+\infty$ is only used for the inequality on the left.  

Furthermore, whenever $0<q,p\leq1$, and $0<\eta\leq1$, we have  
\begin{eqnarray}
\left\|g\right\|_{\mathcal{L}_{r,\phi_{1},\delta}^{(q,p,\eta)}}\leq C(d,q,p)\left\|g\right\|_{\mathcal{L}_{r,\phi_{1},\delta}}, \label{dualqp3bis}
\end{eqnarray}
for any $g\in\mathcal{L}_{r,\phi_{1},\delta}$, where $C(d,q,p)>0$ is a constant independent of $\eta$. Hence 
\begin{eqnarray}
\left\|g\right\|_{\mathcal{L}_{r,\phi_{1},\delta}^{(q,p,\eta)}}\approx\left\|g\right\|_{\mathcal{L}_{r,\phi_{1},\delta}}, \label{dualqp3bisbis}
\end{eqnarray}
whenever $0<q,p\leq1$ and $0<\eta\leq1$, by (\ref{dualqp3}) and (\ref{dualqp3bis}). Thus, $\mathcal{L}_{r,\phi_{1},\delta}^{(q,p,\eta)}=\mathcal{L}_{r,\phi_{1},\delta}$ , with equivalent norms, whenever $0<q,p\leq1$ and $0<\eta\leq1$. 

For the proof of (\ref{dualqp3bis}), consider a family of open subsets $\left\{\Omega^j\right\}_{j\in\mathbb{Z}}$ of $\mathcal{O}_{\mathbb{Z},q,p,\eta}$. Then, for all families of cubes $\left\{Q^{j,n}\right\}_{n\geq0}$ such that $Q^{j,n}\subset \Omega^j$, for all $n\geq0$, and $\sum_{n\geq0}\chi_{_{Q^{j,n}}}\leq K(d)$, with $\widetilde{Q^{j,n}}:=C_0Q^{j,n}$, we have    
\begin{eqnarray*}
&&\sum_{n\geq0}|\widetilde{Q^{j,n}}|^{\frac{1}{r'}}\left(\int_{\widetilde{Q^{j,n}}}\left|g(x)-P_{\widetilde{Q^{j,n}}}^{\delta}(g)(x)\right|^{r}dx\right)^{\frac{1}{r}}\\
&=&\sum_{n\geq0}\left\|\chi_{\widetilde{Q^{j,n}}}\right\|_{q,p}\left[\frac{|\widetilde{Q^{j,n}}|}{\left\|\chi_{\widetilde{Q^{j,n}}}\right\|_{q,p}}\left(\frac{1}{|\widetilde{Q^{j,n}}|}\int_{\widetilde{Q^{j,n}}}\left|g(x)-P_{\widetilde{Q^{j,n}}}^{\delta}(g)(x)\right|^{r}dx\right)^{\frac{1}{r}}\right]\\
&\leq&\left\|g\right\|_{\mathcal{L}_{r,\phi_{1},\delta}}\sum_{n\geq0}\left\|\chi_{\widetilde{Q^{j,n}}}\right\|_{q,p}\leq C(d,q,p)\left\|g\right\|_{\mathcal{L}_{r,\phi_{1},\delta}}\sum_{n\geq0}\left\|\chi_{Q^{j,n}}\right\|_{q,p}\\
&\leq&C(d,q,p)\left\|g\right\|_{\mathcal{L}_{r,\phi_{1},\delta}}\left\|\sum_{n\geq0}\chi_{Q^{j,n}}\right\|_{q,p}\leq C(d,q,p)\left\|g\right\|_{\mathcal{L}_{r,\phi_{1},\delta}}\left\|\chi_{\Omega^j}\right\|_{q,p},
\end{eqnarray*}
by Proposition \ref{InverseHoldMink3} and the fact that $\sum_{n\geq0}\chi_{_{Q^{j,n}}}\leq K(d)\chi_{\Omega^j}$. Therefore,  
\begin{eqnarray*}
\sum_{j\in\mathbb{Z}}2^j\textit{O}(g,\Omega^j,r)&\leq&C(d,q,p)\left\|g\right\|_{\mathcal{L}_{r,\phi_{1},\delta}}\sum_{j\in\mathbb{Z}}2^j\left\|\chi_{\Omega^j}\right\|_{q,p}\\
&\leq&C(d,q,p)\left\|g\right\|_{\mathcal{L}_{r,\phi_{1},\delta}}\left\|\sum_{j\in\mathbb{Z}}2^j\chi_{\Omega^j}\right\|_{q,p}\\
&\leq&C(d,q,p)\left\|g\right\|_{\mathcal{L}_{r,\phi_{1},\delta}}\left\|\sum_{j\in\mathbb{Z}}2^{j\eta}\chi_{\Omega^j}\right\|_{\frac{q}{\eta},\frac{p}{\eta}}^{\frac{1}{\eta}}.
\end{eqnarray*}
So, $g\in\mathcal{L}_{r,\phi_{1},\delta}^{(q,p,\eta)}$ and $\left\|g\right\|_{\mathcal{L}_{r,\phi_{1},\delta}^{(q,p,\eta)}}\leq C(d,q,p)\left\|g\right\|_{\mathcal{L}_{r,\phi_{1},\delta}}$; which completes the proof of (\ref{dualqp3bis}).\\

We can also define a norm $|||\cdot|||_{\mathcal{L}_{r,\phi_{1},\delta}^{(q,p,\eta)}}$ equivalent to $\left\|\cdot\right\|_{\mathcal{L}_{r,\phi_{1},\delta}^{(q,p,\eta)}}$ on $\mathcal{L}_{r,\phi_1,\delta}^{(q,p,\eta)}$, with $0<q\leq1$, $0<p<+\infty$, $0<\eta<+\infty$ and $1\leq r<+\infty$. Indeed, given a function $g$ in $L_{\mathrm{loc}}^r$ and an open subset $\Omega$ such that $\Omega\neq\mathbb{R}^d$, we put $$\widetilde{\textit{O}(g,\Omega,r)}:=\sup{\sum_{n\geq0}\inf_{\mathfrak{p}\in\mathcal{P_{\delta}}}|\widetilde{Q^n}|^{\frac{1}{r'}}\left(\int_{\widetilde{Q^n}}\left|g(x)-\mathfrak{p}(x)\right|^{r}dx\right)^{\frac{1}{r}}},$$ where $\frac{1}{r}+\frac{1}{r'}=1$ and the supremum is taken over all families of cubes $\left\{Q^n\right\}_{n\geq0}$ such that $Q^n\subset\Omega$, for all $n\geq0$ and $\sum_{n\geq0}\chi_{_{Q^n}}\leq K(d)$, with $\widetilde{Q^n}=C_0Q^n$, where $K(d)>0$ and $C_0>1$ are the constants of the definition of $\textit{O}(g,\Omega,r)$. We have the following proposition.  

\begin{prop}\label{dualqpequival0}
Let $g$ be a function in $L_{\mathrm{loc}}^r$. If there is a constant $C>0$ such that, for all families of open subsets $\left\{\Omega^j\right\}_{j\in\mathbb{Z}}$ with $\left\|\sum_{j\in\mathbb{Z}}2^{j\eta}\chi_{\Omega^j}\right\|_{\frac{q}{\eta},\frac{p}{\eta}}<+\infty$, we have  
\begin{eqnarray}
\sum_{j\in\mathbb{Z}}2^j\widetilde{\textit{O}(g,\Omega^j,r)}\leq C\left\|\sum_{j\in\mathbb{Z}}2^{j\eta}\chi_{\Omega^j}\right\|_{\frac{q}{\eta},\frac{p}{\eta}}^{\frac{1}{\eta}}, \label{dualqpequival}
\end{eqnarray} 
then $g\in\mathcal{L}_{r,\phi_{1},\delta}^{(q,p,\eta)}$. Moreover, if we define $$|||g|||_{\mathcal{L}_{r,\phi_{1},\delta}^{(q,p,\eta)}}:=\inf\left\{C>0:\ C \text{ satisfies } (\ref{dualqpequival})\right\},$$ then   
\begin{eqnarray}
|||g|||_{\mathcal{L}_{r,\phi_{1},\delta}^{(q,p,\eta)}}\approx\left\|g\right\|_{\mathcal{L}_{r,\phi_{1},\delta}^{(q,p,\eta)}}. \label{dualqpequival1}
\end{eqnarray} 
\end{prop}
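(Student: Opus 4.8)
The plan is to show that the two seminorms $|||g|||_{\mathcal{L}_{r,\phi_{1},\delta}^{(q,p,\eta)}}$ and $\left\|g\right\|_{\mathcal{L}_{r,\phi_{1},\delta}^{(q,p,\eta)}}$ are comparable by a direct two-sided estimate between the quantities $\widetilde{\textit{O}(g,\Omega,r)}$ and $\textit{O}(g,\Omega,r)$, uniformly over open subsets $\Omega\neq\mathbb{R}^d$. Once we have constants $c_1,c_2>0$, independent of $g$ and $\Omega$, with
\begin{eqnarray*}
c_1\,\textit{O}(g,\Omega,r)\leq\widetilde{\textit{O}(g,\Omega,r)}\leq c_2\,\textit{O}(g,\Omega,r),
\end{eqnarray*}
the comparison of the two seminorms is immediate from their definitions as infima of admissible constants $C$: inequality (\ref{dualqpequival}) for $g$ with constant $C$ implies (\ref{dualqp}) with constant $C/c_1$, and conversely (\ref{dualqp}) with constant $C$ implies (\ref{dualqpequival}) with constant $c_2 C$. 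In particular the membership statement ($g\in\mathcal{L}_{r,\phi_{1},\delta}^{(q,p,\eta)}$ whenever (\ref{dualqpequival}) holds) and (\ref{dualqpequival1}) both follow.

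So the real content is the pointwise-in-$\Omega$ double inequality, and in fact it suffices to prove, for every fixed cube $\widetilde{Q^n}$ appearing in an admissible family, the elementary two-sided bound
\begin{eqnarray*}
\inf_{\mathfrak{p}\in\mathcal{P_{\delta}}}\left(\int_{\widetilde{Q^n}}\left|g(x)-\mathfrak{p}(x)\right|^{r}dx\right)^{\frac{1}{r}}\leq\left(\int_{\widetilde{Q^n}}\left|g(x)-P_{\widetilde{Q^n}}^{\delta}(g)(x)\right|^{r}dx\right)^{\frac{1}{r}}\leq C\inf_{\mathfrak{p}\in\mathcal{P_{\delta}}}\left(\int_{\widetilde{Q^n}}\left|g(x)-\mathfrak{p}(x)\right|^{r}dx\right)^{\frac{1}{r}},
\end{eqnarray*}
with $C$ depending only on $d$, $r$, $\delta$ — not on the cube. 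Summing over $n$ (and weighting by $|\widetilde{Q^n}|^{1/r'}$, taking suprema over admissible families, then summing the $2^j$-weighted series over $j\in\mathbb{Z}$) then transfers this to the double inequality for $\widetilde{\textit{O}}$ versus $\textit{O}$ and hence to the seminorms. The left inequality above is trivial since $P_{\widetilde{Q^n}}^{\delta}(g)\in\mathcal{P_{\delta}}$ is one admissible competitor in the infimum. The right inequality is the standard fact that the Campanato-type projection $P_Q^{\delta}(f)$ is, up to a dimensional constant, a near-best $L^r(Q)$ approximation of $f$ by polynomials of degree $\delta$: for any $\mathfrak{p}\in\mathcal{P_{\delta}}$, write $g-P_{\widetilde{Q^n}}^{\delta}(g)=(g-\mathfrak{p})-(P_{\widetilde{Q^n}}^{\delta}(g)-\mathfrak{p})=(g-\mathfrak{p})-P_{\widetilde{Q^n}}^{\delta}(g-\mathfrak{p})$, using $P_{\widetilde{Q^n}}^{\delta}\mathfrak{p}=\mathfrak{p}$ and linearity of $P_{\widetilde{Q^n}}^{\delta}$; then apply the triangle inequality in $L^r(\widetilde{Q^n})$ together with the boundedness estimate (\ref{3Campanato3}) of Remark \ref{0laremarquetile0} applied to $g-\mathfrak{p}$, which gives $\|P_{\widetilde{Q^n}}^{\delta}(g-\mathfrak{p})\|_{L^r(\widetilde{Q^n})}\leq C\|g-\mathfrak{p}\|_{L^r(\widetilde{Q^n})}$, whence $\|g-P_{\widetilde{Q^n}}^{\delta}(g)\|_{L^r(\widetilde{Q^n})}\leq(1+C)\|g-\mathfrak{p}\|_{L^r(\widetilde{Q^n})}$; taking the infimum over $\mathfrak{p}$ finishes it.

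The main obstacle — such as it is — is purely bookkeeping: one must be careful that the constant $C$ produced is genuinely uniform over all cubes (it is, since (\ref{3Campanato3}) has a constant independent of $Q$), and that passing from the per-cube estimate through the sum $\sum_{n\geq0}$, the supremum over admissible families $\{Q^n\}_{n\geq0}$ with $\sum_n\chi_{Q^n}\leq K(d)$, and the outer sum $\sum_{j\in\mathbb{Z}}2^j(\cdot)$ does not disturb the constants. Since every operation is monotone and the per-cube constant factors out of each sum and supremum, this goes through verbatim. For $r=+\infty$ one argues identically with the $L^\infty(\widetilde{Q^n})$ norm in place of $L^r$, using (\ref{1Campanato3}); but note the proposition is stated for $1\le r<+\infty$, so this case need not be treated. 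No further ingredients beyond Remark \ref{0laremarquetile0} and the definitions are needed.
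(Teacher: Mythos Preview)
Your proposal is correct and follows essentially the same route as the paper: the paper also reduces the comparison of the two seminorms to the two-sided estimate $\widetilde{\textit{O}(g,\Omega,r)}\leq\textit{O}(g,\Omega,r)\leq(1+C)\,\widetilde{\textit{O}(g,\Omega,r)}$, obtained from the per-cube near-best approximation inequality $\|g-P_{\widetilde{Q^n}}^{\delta}(g)\|_{L^r(\widetilde{Q^n})}\leq(1+C)\|g-\mathfrak{p}\|_{L^r(\widetilde{Q^n})}$ via Remark~\ref{0laremarquetile0}, exactly as you do. The only cosmetic difference is that the paper merely cites this last inequality as standard, whereas you spell out the one-line argument $g-P_{\widetilde{Q^n}}^{\delta}(g)=(g-\mathfrak{p})-P_{\widetilde{Q^n}}^{\delta}(g-\mathfrak{p})$ together with (\ref{3Campanato3}).
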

\begin{proof} 
Let $\left\{\Omega^j\right\}_{j\in\mathbb{Z}}$ be a family of open subsets fulfilling the hypotheses of the proposition. First, we show that, for any open subset $\Omega$ such that $\Omega\neq\mathbb{R}^d$, we have  
\begin{eqnarray}
\widetilde{\textit{O}(g,\Omega,r)}\leq\textit{O}(g,\Omega,r)\leq (1+C)\widetilde{\textit{O}(g,\Omega,r)}, \label{dualqpequival2}
\end{eqnarray}
where the constant $C>0$ is the one of Remark \ref{0laremarquetile0}. By definition of $\widetilde{\textit{O}(g,\Omega,r)}$ and $\textit{O}(g,\Omega,r)$, clearly, we have $$\widetilde{\textit{O}(g,\Omega,r)}\leq\textit{O}(g,\Omega,r).$$ For the inequality on the right in (\ref{dualqpequival2}), we consider a family of cubes $\left\{Q^n\right\}_{n\geq0}$ such that $Q^n\subset\Omega$, for all $n\geq0$ and $\sum_{n\geq0}\chi_{_{Q^n}}\leq K(d)$, with $\widetilde{Q^n}=C_0Q^n$. Let $\mathfrak{p}\in\mathcal{P_{\delta}}$. We have  
\begin{eqnarray*}
\left(\int_{\widetilde{Q^n}}\left|g(x)-P_{\widetilde{Q^n}}^{\delta}(g)(x)\right|^{r}dx\right)^{\frac{1}{r}}\leq(1+C)\left(\int_{\widetilde{Q^n}}\left|g(x)-\mathfrak{p}(x)\right|^{r}dx\right)^{\frac{1}{r}},
\end{eqnarray*}
where the constant $C>0$ is the one of Remark \ref{0laremarquetile0}. The proof of this inequality is standard (see, for instance, \cite{MBOW}, p. 53). Hence  
\begin{eqnarray*}
&&\textit{O}(g,\Omega,r)=\sup{\sum_{n\geq0}|\widetilde{Q^n}|^{\frac{1}{r'}}\left(\int_{\widetilde{Q^n}}\left|g(x)-P_{\widetilde{Q^n}}^{\delta}(g)(x)\right|^{r}dx\right)^{\frac{1}{r}}}\\
&\leq&(1+C)\sup{\sum_{n\geq0}\inf_{\mathfrak{p}\in\mathcal{P_{\delta}}}|\widetilde{Q^n}|^{\frac{1}{r'}}\left(\int_{\widetilde{Q^n}}\left|g(x)-\mathfrak{p}(x)\right|^{r}dx\right)^{\frac{1}{r}}}=(1+C)\widetilde{\textit{O}(g,\Omega,r)},
\end{eqnarray*}
which completes the proof of (\ref{dualqpequival2}). From (\ref{dualqpequival}) and (\ref{dualqpequival2}), it follows that $$\sum_{j\in\mathbb{Z}}2^j\textit{O}(g,\Omega^j,r)\leq C(1+C)\left\|\sum_{j\in\mathbb{Z}}2^{j\eta}\chi_{\Omega^j}\right\|_{\frac{q}{\eta},\frac{p}{\eta}}^{\frac{1}{\eta}}.$$ Therefore, $g\in\mathcal{L}_{r,\phi_{1},\delta}^{(q,p,\eta)}$, with $\left\|g\right\|_{\mathcal{L}_{r,\phi_{1},\delta}^{(q,p,\eta)}}\leq(1+C)|||g|||_{\mathcal{L}_{r,\phi_{1},\delta}^{(q,p,\eta)}}$. Furthermore, since $g\in\mathcal{L}_{r,\phi_{1},\delta}^{(q,p,\eta)}$, we have, by (\ref{dualqpequival2}) and (\ref{dualqp2}), $$\sum_{j\in\mathbb{Z}}2^j\widetilde{\textit{O}(g,\Omega^j,r)}\leq\sum_{j\in\mathbb{Z}}2^j\textit{O}(g,\Omega^j,r)\leq\left\|g\right\|_{\mathcal{L}_{r,\phi_{1},\delta}^{(q,p,\eta)}}\left\|\sum_{j\in\mathbb{Z}}2^{j\eta}\chi_{\Omega^j}\right\|_{\frac{q}{\eta},\frac{p}{\eta}}^{\frac{1}{\eta}}.$$ Hence $|||g|||_{\mathcal{L}_{r,\phi_{1},\delta}^{(q,p,\eta)}}\leq\left\|g\right\|_{\mathcal{L}_{r,\phi_{1},\delta}^{(q,p,\eta)}}$. This states (\ref{dualqpequival1}).
\end{proof}

Moreover, one easily verifies that $|||\cdot|||_{\mathcal{L}_{r,\phi_{1},\delta}^{(q,p,\eta)}}$ is a norm on $\mathcal{L}_{r,\phi_1,\delta}^{(q,p,\eta)}$.\\   

Our duality result for $\mathcal{H}^{(q,p)}$, with $0<q\leq 1<p<+\infty$, can be stated as follows. 

\begin{thm} \label{theoremdualqp}
Suppose that $0<q\leq1<p<+\infty$. Let $p<r\leq+\infty$. Then, $\left(\mathcal{H}^{(q,p)}\right)^{\ast}$ is isomorphic to $\mathcal{L}_{r',\phi_1,\delta}^{(q,p,\eta)}$ , where $\frac{1}{r}+\frac{1}{r'}=1$, $0<\eta<q$ if $r<+\infty$, and $0<\eta\leq1$ if $r=+\infty$, with equivalent norms. More precisely, we have the following assertions:
\begin{enumerate}
\item Let $g\in\mathcal{L}_{r',\phi_1,\delta}^{(q,p,\eta)}$. Then, the mapping $$T_g:\mathcal{H}_{fin}^{(q,p)}\ni f\longmapsto\int_{\mathbb{R}^d}g(x)f(x)dx,$$ where $\mathcal{H}_{fin}^{(q,p)}$ is the subspace of $\mathcal{H}^{(q,p)}$ consisting of finite linear combinations of $(q,r,\delta)$-atoms, extends to a unique continuous linear functional $\widetilde{T_g}$ on $\mathcal{H}^{(q,p)}$ such that 
\begin{eqnarray*}
\left\|\widetilde{T_g}\right\|=\left\|T_g\right\|\leq C\left\|g\right\|_{\mathcal{L}_{r',\phi_{1},\delta}^{(q,p,\eta)}}, 
\end{eqnarray*} 
where $C>0$ is a constant independent of $g$. \label{dualpointqp1}
\item Conversely, for any $T\in\left(\mathcal{H}^{(q,p)}\right)^{\ast}$, there exists $g\in\mathcal{L}_{r',\phi_1,\delta}^{(q,p,\eta)}$ such that $T=T_g$; namely $$T(f)=\int_{\mathbb{R}^d}g(x)f(x)dx,\ \text{ for all }\ f\in\mathcal{H}_{fin}^{(q,p)},$$ and
\begin{eqnarray*}
\left\|g\right\|_{\mathcal{L}_{r',\phi_{1},\delta}^{(q,p,\eta)}}\leq C\left\|T\right\|, 
\end{eqnarray*} 
where $C>0$ is a constant independent of $T$. \label{dualpointqp2}
\end{enumerate}
\end{thm}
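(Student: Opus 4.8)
Throughout I will argue the two inclusions separately, keeping in mind that, by \eqref{dualqp3bisbis} together with Theorem \ref{theoremdual}, the statement is already known when $0<q\le p\le1$; the genuinely new range is $1<p<+\infty$, even though the computations below are written so as to hold for all $p>0$. I fix a grand maximal function $\mathcal{M}(\cdot)$ realizing $\|\mathcal{M}(f)\|_{q,p}\approx\|f\|_{\mathcal{H}^{(q,p)}}$ and will use repeatedly that, for any $t>0$, $\sum_{j\in\mathbb{Z}}2^{jt}\chi_{\{\mathcal{M}(f)>2^j\}}\approx(\mathcal{M}(f))^{t}$ pointwise, so that
\[
\Big\|\sum_{j\in\mathbb{Z}}2^{j\eta}\chi_{\{\mathcal{M}(f)>2^j\}}\Big\|_{\frac{q}{\eta},\frac{p}{\eta}}^{\frac1\eta}\approx\|\mathcal{M}(f)\|_{q,p}\approx\|f\|_{\mathcal{H}^{(q,p)}};
\]
in particular $\{\{\mathcal{M}(f)>2^j\}\}_{j\in\mathbb{Z}}\in\mathcal{O}_{\mathbb{Z},q,p,\eta}$. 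The isomorphism will follow once assertions (1) and (2) are proved, together with the fact that a locally integrable function annihilating every $(q,r,\delta)$-atom lies in $\mathcal{P}_\delta$ (which gives well-definedness modulo $\mathcal{P}_\delta$).

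\emph{Step 1 (assertion (1)).} Let $g\in\mathcal{L}_{r',\phi_{1},\delta}^{(q,p,\eta)}$ and $f\in\mathcal{H}_{fin}^{(q,p)}$. I would invoke the Calder\'on--Zygmund/atomic decomposition of $\mathcal{H}^{(q,p)}$ from \cite{AbFt}: with $\Omega^j:=\{\mathcal{M}(f)>2^j\}$ one writes $f=\sum_{j\in\mathbb{Z}}\sum_{n\ge0}\lambda_{j,n}a_{j,n}$, where $a_{j,n}$ is a $(q,r,\delta)$-atom supported in a cube $Q^{j,n}$, $Q^{j,n}\subset\Omega^j$, $\sum_{n\ge0}\chi_{Q^{j,n}}\le K(d)$ for each $j$, and $|\lambda_{j,n}|\lesssim2^j\|\chi_{Q^{j,n}}\|_{q,p}$ (the constants $K(d)$ and the dilation factor $C_0$ of $\widetilde{Q^{j,n}}=C_0Q^{j,n}$ being those in the definition of $\textit{O}(\cdot,\cdot,r')$, up to a harmless covering argument). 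Using the vanishing moments of $a_{j,n}$, H\"older's inequality on $\widetilde{Q^{j,n}}$, the size estimate $\|a_{j,n}\|_r\le|Q^{j,n}|^{1/r-1/q}\lesssim|\widetilde{Q^{j,n}}|^{1/r-1/q}$, and $\|\chi_{Q^{j,n}}\|_{q,p}\le|Q^{j,n}|^{1/q}\lesssim|\widetilde{Q^{j,n}}|^{1/q}$, one gets
\[
|\lambda_{j,n}|\Big|\int_{\mathbb{R}^d}g\,a_{j,n}\Big|=|\lambda_{j,n}|\Big|\int_{\widetilde{Q^{j,n}}}\big(g-P_{\widetilde{Q^{j,n}}}^{\delta}(g)\big)a_{j,n}\Big|\lesssim2^j\,|\widetilde{Q^{j,n}}|^{\frac1r}\Big(\int_{\widetilde{Q^{j,n}}}\big|g-P_{\widetilde{Q^{j,n}}}^{\delta}(g)\big|^{r'}\Big)^{\frac1{r'}}.
\]
Summing over $n$ (the family $\{Q^{j,n}\}_n$ is admissible in $\textit{O}(g,\Omega^j,r')$), then over $j$, and using \eqref{dualqp2} and the recurring identity, gives $\sum_{j,n}|\lambda_{j,n}|\,|\int g\,a_{j,n}|\lesssim\|g\|_{\mathcal{L}_{r',\phi_{1},\delta}^{(q,p,\eta)}}\|f\|_{\mathcal{H}^{(q,p)}}$. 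Hence the double series converges absolutely; since the decomposition also converges in $\mathcal{S}'$ and in $L^1_{\mathrm{loc}}$ for compactly supported $f$, one may interchange sum and integral, so $|\int g f|\lesssim\|g\|_{\mathcal{L}_{r',\phi_{1},\delta}^{(q,p,\eta)}}\|f\|_{\mathcal{H}^{(q,p)}}$, and density of $\mathcal{H}_{fin}^{(q,p)}$ in $\mathcal{H}^{(q,p)}$ yields the unique continuous extension $\widetilde{T_g}$ with $\|\widetilde{T_g}\|=\|T_g\|\lesssim\|g\|_{\mathcal{L}_{r',\phi_{1},\delta}^{(q,p,\eta)}}$.

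\emph{Step 2 (assertion (2)).} Given $T\in(\mathcal{H}^{(q,p)})^{\ast}$. For a cube $Q$, every nonzero $h\in L^{r,\delta}(Q)$ is, after normalization, a multiple of a $(q,r,\delta)$-atom, so $\|h\|_{\mathcal{H}^{(q,p)}}\lesssim|Q|^{1/q-1/r}\|\chi_Q\|_{q,p}\|h\|_{L^r(Q)}$; thus $T$ is bounded on $L^{r,\delta}(Q)$, and by Hahn--Banach and the duality $(L^{r,\delta}(Q))^{\ast}\cong L^{r'}(Q)/\mathcal{P}_\delta$ there is $g_Q$ with $T(h)=\int_Q g_Q h$ on $L^{r,\delta}(Q)$. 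Letting $Q$ exhaust $\mathbb{R}^d$ and using uniqueness modulo $\mathcal{P}_\delta$, one patches these into $g\in L^{r'}_{\mathrm{loc}}$ with $T(f)=\int g f$ for all $f\in L^{r,\delta}_{\mathrm{comp}}(\mathbb{R}^d)=\mathcal{H}_{fin}^{(q,p)}$. To see $g\in\mathcal{L}_{r',\phi_{1},\delta}^{(q,p,\eta)}$ with the right bound, by Proposition \ref{dualqpequival0} it suffices to bound $\sum_{j}2^j\widetilde{\textit{O}(g,\Omega^j,r')}$ for $\{\Omega^j\}_{j}\in\mathcal{O}_{\mathbb{Z},q,p,\eta}$. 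Fix such a family and, for each $j$, cubes $\{Q^{j,n}\}_n$ with $Q^{j,n}\subset\Omega^j$ and $\sum_n\chi_{Q^{j,n}}\le K(d)$. Using $(L^{r,\delta}(\widetilde{Q^{j,n}}))^{\ast}\cong L^{r'}(\widetilde{Q^{j,n}})/\mathcal{P}_\delta$ and Remark \ref{0laremarquetile0}, choose $h_{j,n}\in L^{r,\delta}(\widetilde{Q^{j,n}})$ with $\|h_{j,n}\|_{L^r(\widetilde{Q^{j,n}})}\le1$ and $\inf_{\mathfrak{p}\in\mathcal{P}_\delta}(\int_{\widetilde{Q^{j,n}}}|g-\mathfrak{p}|^{r'})^{1/r'}\le2\int_{\widetilde{Q^{j,n}}}g\,h_{j,n}$. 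Since $a_{j,n}:=|\widetilde{Q^{j,n}}|^{1/r-1/q}h_{j,n}$ is a $(q,r,\delta)$-atom on $\widetilde{Q^{j,n}}$, one has $|\widetilde{Q^{j,n}}|^{1/r}h_{j,n}=|\widetilde{Q^{j,n}}|^{1/q}a_{j,n}$ and $\int g\,a_{j,n}=T(a_{j,n})$, whence
\[
\sum_{j}2^j\widetilde{\textit{O}(g,\Omega^j,r')}\le2\sum_{j,n}2^j|\widetilde{Q^{j,n}}|^{\frac1q}T(a_{j,n})=2\,T\Big(\sum_{j,n}2^j|\widetilde{Q^{j,n}}|^{\frac1q}a_{j,n}\Big)\le2\|T\|\,\Big\|\sum_{j,n}2^j|\widetilde{Q^{j,n}}|^{\frac1q}a_{j,n}\Big\|_{\mathcal{H}^{(q,p)}}.
\]

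The crux, and the step I expect to be the main obstacle, is the estimate $\big\|\sum_{j,n}2^j|\widetilde{Q^{j,n}}|^{1/q}a_{j,n}\big\|_{\mathcal{H}^{(q,p)}}\lesssim\big\|\sum_{j}2^{j\eta}\chi_{\Omega^j}\big\|_{\frac q\eta,\frac p\eta}^{1/\eta}$ (which also legitimates interchanging $T$ with the series, via convergence of the atomic decomposition in $\mathcal{H}^{(q,p)}$). The plan is: apply the reconstruction theorem of \cite{AbFt} to bound the left side by $\big\|\big(\sum_{j,n}(2^j\chi_{\widetilde{Q^{j,n}}})^q\big)^{1/q}\big\|_{q,p}$; note that although $\widetilde{Q^{j,n}}=C_0Q^{j,n}$ need not lie in $\Omega^j$, still $\sum_n\chi_{\widetilde{Q^{j,n}}}\le K'(d)\chi_{E^j}$ with $E^j:=\bigcup_n\widetilde{Q^{j,n}}$ and $\chi_{E^j}\le c(d,C_0)^{-1}[\mathfrak{M}(\chi_{\Omega^j})]^{q/\eta}$ (since $\mathfrak{M}(\chi_{\Omega^j})\ge c(d,C_0)>0$ on $E^j$), so the left side is $\lesssim\big\|\big(\sum_j[\mathfrak{M}(2^{j\eta}\chi_{\Omega^j})]^{q/\eta}\big)^{\eta/q}\big\|_{\frac q\eta,\frac p\eta}^{1/\eta}$; finally apply the vector-valued maximal inequality of Proposition \ref{operamaxima} with exponents $q/\eta,p/\eta$ and $u=q/\eta$ — valid precisely because $\eta<q$ forces $q/\eta>1$ (and $p/\eta>1$ since $\eta<q\le1<p$) — to bound this by $\big\|\big(\sum_j(2^{j\eta}\chi_{\Omega^j})^{q/\eta}\big)^{\eta/q}\big\|_{\frac q\eta,\frac p\eta}^{1/\eta}\le\big\|\sum_j2^{j\eta}\chi_{\Omega^j}\big\|_{\frac q\eta,\frac p\eta}^{1/\eta}$, as required. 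The delicate points here are the correct bookkeeping of the amalgam normalizations ($\|\chi_Q\|_{q,p}$ versus $|Q|^{1/q}$, which agree only for cubes of side $\le1$) inside the reconstruction theorem and the passage $Q^{j,n}\rightsquigarrow\widetilde{Q^{j,n}}$; the case $r=+\infty$ is handled along the same lines, with the $L^{r'}$--$L^{r}$ duality replaced by $L^1$--$L^{\infty}$ duality, which accounts for the weaker requirement $0<\eta\le1$ there.
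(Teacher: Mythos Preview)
Your overall strategy matches the paper's, and Step~1 is essentially the same argument. In Step~2, however, there is a genuine gap in the key estimate. The claim
\[
\sum_{n}\chi_{\widetilde{Q^{j,n}}}\le K'(d)\,\chi_{E^j}
\]
is false in general: bounded overlap of $\{Q^{j,n}\}_n$ does \emph{not} pass to the dilated cubes $\{C_0Q^{j,n}\}_n$ when the side-lengths are not comparable. For instance, in $d=1$ with $C_0=3$, the intervals $Q^k=[2^{-k-1},2^{-k}]$ ($k\ge1$) satisfy $\sum_k\chi_{Q^k}\le2$, yet $\widetilde{Q^k}=[0,\,3\cdot2^{-k-1}]$ all contain the origin, so $\sum_k\chi_{\widetilde{Q^k}}(0)=+\infty$. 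Since in assertion~(2) the families $\{Q^{j,n}\}_n$ are \emph{arbitrary} bounded-overlap families inside $\Omega^j$ (chosen to nearly realise the supremum defining $\textit{O}(g,\Omega^j,r')$), you cannot impose any Whitney-type structure that would prevent this phenomenon.

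The paper avoids this by never collapsing the $n$-sum before the maximal-function step. It uses instead the pointwise bound $\chi_{\widetilde{Q^{j,k}}}\le C(d,\gamma)\bigl[\mathfrak{M}(\chi_{Q^{j,k}})\bigr]^{\gamma}$ for each individual cube, with a fixed $\gamma>\max\{1,\eta/q\}$, and then applies Proposition~\ref{operamaxima} with the \emph{double index} $(j,k)$, exponent $u=\gamma$, and amalgam exponents $q\gamma/\eta,\ p\gamma/\eta$ (all $>1$). Only after this does one invoke $\sum_{k}\chi_{Q^{j,k}}\le K(d)\chi_{\Omega^j}$ on the \emph{undilated} cubes to reach $\bigl\|\sum_j 2^{j\eta}\chi_{\Omega^j}\bigr\|_{q/\eta,p/\eta}^{1/\eta}$. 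A secondary imprecision in your write-up: the reconstruction theorems of \cite{AbFt} (Theorems~4.3 and~4.6) bound $\bigl\|\sum\lambda_{j,k}a_{j,k}\bigr\|_{\mathcal{H}^{(q,p)}}$ by $\bigl\|\sum_{j,k}2^{j\eta}\chi_{\widetilde{Q^{j,k}}}\bigr\|_{q/\eta,p/\eta}^{1/\eta}$ with $0<\eta<q$ (or $0<\eta\le1$ when $r=+\infty$), not by the $\ell^q$-type quantity $\bigl\|(\sum_{j,n}(2^j\chi_{\widetilde{Q^{j,n}}})^q)^{1/q}\bigr\|_{q,p}$ you wrote; the bookkeeping should therefore be carried out directly at level~$\eta$.
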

\begin{proof}
Let $g\in\mathcal{L}_{r',\phi_1,\delta}^{(q,p,\eta)}$. Let $f\in\mathcal{H}_{fin}^{(q,p)}$, where $\mathcal{H}_{fin}^{(q,p)}$ is the subspace of $\mathcal{H}^{(q,p)}$ consisting of finite linear combinations of $(q,r,\delta)$-atoms. Then, $f=\sum_{j=-\infty}^{+\infty}\sum_{k\geq0}\lambda_{j,k}a_{j,k}$ almost everywhere and in the sense of $\mathcal{S'}$, with $\lambda_{j,k}=C_{1}2^j|\widetilde{Q^{j,k}}|^{\frac{1}{q}}$, $\left\|a_{j,k}\right\|_{L^\infty}\leq|\widetilde{Q^{j,k}}|^{-\frac{1}{q}}$, $\left(a_{j,k},\widetilde{Q^{j,k}}\right)\in\mathcal{A}(q,\infty,\delta)$, $\widetilde{Q^{j,k}}:=C_0Q^{j,k}$, $Q^{j,k}\subset\mathcal{O}^j$ ($\mathcal{O}^j$ being a certain open subset not equal to $\mathbb{R}^d$), for all $k\geq0$, $\sum_{k\geq0}\chi_{_{Q^{j,k}}}\leq K(d)$ and 
\begin{eqnarray*}
\left\|\sum_{j=-\infty}^{+\infty}\sum_{k\geq 0}\left(\frac{|\lambda_{j,k}|}{\left\|\chi_{_{\widetilde{Q^{j,k}}}}\right\|_{q}}\right)^{\eta}\chi_{_{\widetilde{Q^{j,k}}}}\right\|_{\frac{q}{\eta},\frac{p}{\eta}}^{\frac{1}{{\eta}}}\approx \left\|\sum_{j=-\infty}^{+\infty}2^{j\eta}\chi_{\mathcal{O}^j}\right\|_{\frac{q}{\eta},\frac{p}{\eta}}^{\frac{1}{{\eta}}}\approx\left\|f\right\|_{\mathcal{H}^{(q,p)}},
\end{eqnarray*}
$0<\eta\leq1$, by the proof of Theorem 4.4 in \cite{AbFt}, and Theorem 4.3 in \cite{AbFt}. Moreover, since $f=\sum_{j=-\infty}^{+\infty}\sum_{k\geq0}\lambda_{j,k}a_{j,k}$ also (inconditionally) in $\mathcal{H}^q$ (because $\mathcal{H}_{fin}^{(q,p)}\subset\mathcal{H}^q$), each $\left(a_{j,k},\widetilde{Q^{j,k}}\right)$ is also in $\mathcal{A}(q,r,\delta)$, and $\mathcal{L}_{r',\phi_1,\delta}^{(q,p,\eta)}\subset\mathcal{L}_{r',\phi_2,\delta}$ (see (\ref{dualqp3})) with $\mathcal{L}_{r',\phi_2,\delta}$ being the topological dual of $\mathcal{H}^q$, we have $$\int_{\mathbb{R}^d}g(x)f(x)dx=\sum_{j=-\infty}^{+\infty}\sum_{k\geq0}\lambda_{j,k}\int_{\mathbb{R}^d}g(x)a_{j,k}(x)dx$$ (see \cite{MBOW} or \cite{JGLR}, for details). Hence 
\begin{eqnarray*}
|T_g(f)|&=&\left|\int_{\mathbb{R}^d}g(x)f(x)dx\right|\leq\sum_{j=-\infty}^{+\infty}\sum_{k\geq0}|\lambda_{j,k}|\left|\int_{\mathbb{R}^d}g(x)a_{j,k}(x)dx\right|\\
&=&C_{1}\sum_{j=-\infty}^{+\infty}\sum_{k\geq0}2^j|\widetilde{Q^{j,k}}|^{\frac{1}{q}}\left|\int_{\widetilde{Q^{j,k}}}(g(x)-P_{\widetilde{Q^{j,k}}}^\delta(g)(x))a_{j,k}(x)dx\right|\\
&\leq&C_{1}\sum_{j=-\infty}^{+\infty}\sum_{k\geq0}2^j|\widetilde{Q^{j,k}}|^{\frac{1}{q}}\left\|a_{j,k}\right\|_{r}\left(\int_{\widetilde{Q^{j,k}}}|g(x)-P_{\widetilde{Q^{j,k}}}^\delta(g)(x)|^{r'}dx\right)^{\frac{1}{r'}}\\
&\leq&C_{1}\sum_{j=-\infty}^{+\infty}2^j\sum_{k\geq0}|\widetilde{Q^{j,k}}|^{\frac{1}{r}}\left(\int_{\widetilde{Q^{j,k}}}|g(x)-P_{\widetilde{Q^{j,k}}}^\delta(g)(x)|^{r'}dx\right)^{\frac{1}{r'}}\\
&\leq&C_{1}\sum_{j=-\infty}^{+\infty}2^j\textit{O}(g,\mathcal{O}^j,r')\\
&\leq&C_{1}\left\|g\right\|_{\mathcal{L}_{r',\phi_{1},\delta}^{(q,p,\eta)}}\left\|\sum_{j=-\infty}^{+\infty}2^{j\eta}\chi_{\mathcal{O}^j}\right\|_{\frac{q}{\eta},\frac{p}{\eta}}^{\frac{1}{\eta}}\leq C\left\|g\right\|_{\mathcal{L}_{r',\phi_{1},\delta}^{(q,p,\eta)}}\left\|f\right\|_{\mathcal{H}^{(q,p)}}. 
\end{eqnarray*}  
This shows that $T_g$ extends to a unique continuous linear functional $\widetilde{T_g}$ on $\mathcal{H}^{(q,p)}$, with $\left\|\widetilde{T_g}\right\|=\left\|T_g\right\|\leq C\left\|g\right\|_{\mathcal{L}_{r',\phi_{1},\delta}^{(q,p,\eta)}}$, where $C>0$ is a constant independent of $g$, given that $\mathcal{H}_{fin}^{(q,p)}$ is a dense subspace of $\mathcal{H}^{(q,p)}$ with respect to the quasi-norm $\left\|\cdot\right\|_{\mathcal{H}^{(q,p)}}$.\\

Now we prove the second assertion. Let $T\in\left(\mathcal{H}^{(q,p)}\right)^{\ast}$. Then, there exists $g\in\mathcal{L}_{r',\phi_1,\delta}$ such that $T=T_g$; namely $$T(f)=T_g(f)=\int_{\mathbb{R}^d}g(x)f(x)dx,\ \text{ for all }\ f\in\mathcal{H}_{fin}^{(q,p)},\ \ (\otimes)$$ by the proof of Theorem \ref{theoremdual} (\cite{AbFt1}, Theorem 3.3). We are going to show that $g$ is in $\mathcal{L}_{r',\phi_1,\delta}^{(q,p,\eta)}$. Let $\left\{\Omega^j\right\}_{j\in\mathbb{Z}}$ be a family of open subsets such that $\left\|\sum_{j\in\mathbb{Z}}2^{j\eta}\chi_{\Omega^j}\right\|_{\frac{q}{\eta},\frac{p}{\eta}}<+\infty$, and $\left\{Q^{j,k}\right\}_{k\geq0}$ be a family of cubes such that $Q^{j,k}\subset\Omega^j$, for all $k\geq0$, $\sum_{k\geq0}\chi_{_{Q^{j,k}}}\leq K(d)$, with $\widetilde{Q^{j,k}}:=C_0Q^{j,k}$, and $$\textit{O}(g,\Omega^j,r')\leq 2\sum_{k\geq0}|\widetilde{Q^{j,k}}|^{\frac{1}{r}}\left(\int_{\widetilde{Q^{j,k}}}|g(x)-P_{\widetilde{Q^{j,k}}}^\delta(g)(x)|^{r'}dx\right)^{\frac{1}{r'}}.\ \ (\otimes\otimes)$$ 

We note that the hypothesis $(\otimes\otimes)$ is justified. In fact, if $\textit{O}(g,\Omega^j,r')=0$, then $(\otimes\otimes)$ is evident, since for any family of cubes $\left\{Q^{j,k}\right\}_{k\geq0}$, the member on the right is null, by the definition of $\textit{O}(g,\Omega^j,r')$. If $0<\textit{O}(g,\Omega^j,r')<+\infty$, then by the definition of $\textit{O}(g,\Omega^j,r')$, it is not hard to find a family of cubes $\left\{Q^{j,k}\right\}_{k\geq0}$ verifying the hypothesis $(\otimes\otimes)$. Then, the justification of $(\otimes\otimes)$ amounts to prove that 
\begin{eqnarray}
\textit{O}(g,\Omega^j,r')<+\infty. \label{apresrefexqp}
\end{eqnarray}
The proof of (\ref{apresrefexqp}) being contained in the sequel, we admit it for the moment, and so $(\otimes\otimes)$ also. 

Let $h_{j,k}\in L^{r}(\widetilde{Q^{j,k}})$ with $\left\|h_{j,k}\right\|_{L^r(\widetilde{Q^{j,k}})}=1$ such that $$\left(\int_{\widetilde{Q^{j,k}}}\left|g(x)-P_{\widetilde{Q^{j,k}}}^{\delta}(g)(x)\right|^{r'}dx\right)^{\frac{1}{r'}}=\int_{\widetilde{Q^{j,k}}}\left(g(x)-P_{\widetilde{Q^{j,k}}}^{\delta}(g)(x)\right)h_{j,k}(x)dx.$$ Put $$a_{j,k}(x):=C_{h_{j,k},Q^{j,k},\delta}|\widetilde{Q^{j,k}}|^{\frac{1}{r}-\frac{1}{q}}\left(h_{j,k}(x)-P_{\widetilde{Q^{j,k}}}^{\delta}(h_{j,k})(x)\right)\chi_{\widetilde{Q^{j,k}}}(x),$$ with $$C_{h_{j,k},Q^{j,k},\delta}:=\left(1+\left\|\left(h_{j,k}-P_{\widetilde{Q^{j,k}}}^{\delta}(h_{j,k})\right)\chi_{\widetilde{Q^{j,k}}}\right\|_r\right)^{-1}.$$ We have $(a_{j,k},\widetilde{Q^{j,k}})\in\mathcal{A}(q,r,\delta)$ and $1+\left\|\left(h_{j,k}-P_{\widetilde{Q^{j,k}}}^{\delta}(h_{j,k})\right)\chi_{\widetilde{Q^{j,k}}}\right\|_r=C_{h_{j,k},Q^{j,k},\delta}^{-1}\leq2+C$, where $C>0$ is a constant independent of $h_{j,k}$ and $\widetilde{Q^{j,k}}$, by Remark \ref{0laremarquetile0} (\ref{3Campanato3}) and the fact that $\left\|h_{j,k}\right\|_{L^r(\widetilde{Q^{j,k}})}=1$. Put $\lambda_{j,k}:=2^j|\widetilde{Q^{j,k}}|^{\frac{1}{q}}$. Consider a real $\gamma>\max\left\{1,\frac{\eta}{q}\right\}$, for instance $\gamma=1+\frac{\eta}{q}\cdot$ We have $1<\gamma$ and $1<\frac{q\gamma}{\eta}\leq\frac{p\gamma}{\eta}\cdot$ Thus, with Proposition \ref{operamaxima}, by arguing as in the proof of Theorem 4.4 in \cite{AbFt} (see p. 1919), we obtain  
\begin{eqnarray*}
\left\|\sum_{j\in\mathbb{Z}}\sum_{k\geq 0}\left(\frac{|\lambda_{j,k}|}{\left\|\chi_{_{\widetilde{Q^{j,k}}}}\right\|_{q}}\right)^{\eta}\chi_{_{\widetilde{Q^{j,k}}}}\right\|_{\frac{q}{\eta},\frac{p}{\eta}}^{\frac{1}{{\eta}}}
&\leq&C(d,\gamma,\eta)\left\|\sum_{j\in\mathbb{Z}}\sum_{k\geq 0}2^{j\eta}\left[\mathfrak{M}\left(\chi_{_{Q^{j,k}}}\right)\right]^{\gamma}\right\|_{\frac{q}{\eta},\frac{p}{\eta}}^{\frac{1}{{\eta}}}\\ 
&\leq&C(q,p,d,\gamma,\eta)\left\|\sum_{j\in\mathbb{Z}}\sum_{k\geq 0}2^{j\eta}\chi_{_{Q^{j,k}}}\right\|_{\frac{q}{\eta},\frac{p}{\eta}}^{\frac{1}{\eta}}\\
&\leq&C(q,p,d,\eta)\left\|\sum_{j\in\mathbb{Z}}2^{j\eta}\chi_{\Omega^j}\right\|_{\frac{q}{\eta},\frac{p}{\eta}}^{\frac{1}{\eta}}<+\infty.
\end{eqnarray*} 
Then, considering that $0<\eta<q$ (if $p<r<+\infty$) or $0<\eta\leq 1$ (if $r=+\infty$), we have $\sum_{j\in\mathbb{Z}}\sum_{k\geq 0}\lambda_{j,k}a_{j,k}\in\mathcal{H}^{(q,p)}$ and 
\begin{eqnarray*}
\left\|\sum_{j\in\mathbb{Z}}\sum_{k\geq 0}\lambda_{j,k}a_{j,k}\right\|_{\mathcal{H}^{(q,p)}}&\leq&C(\varphi,q,p,d,\delta,r,\eta)\left\|\sum_{j\in\mathbb{Z}}\sum_{k\geq 0}\left(\frac{|\lambda_{j,k}|}{\left\|\chi_{_{\widetilde{Q^{j,k}}}}\right\|_{q}}\right)^{\eta}\chi_{_{\widetilde{Q^{j,k}}}}\right\|_{\frac{q}{\eta},\frac{p}{\eta}}^{\frac{1}{{\eta}}}\\
&\leq&C(\varphi,q,p,d,\delta,r,\eta)\left\|\sum_{j\in\mathbb{Z}}2^{j\eta}\chi_{\Omega^j}\right\|_{\frac{q}{\eta},\frac{p}{\eta}}^{\frac{1}{\eta}},\ \ (\otimes\otimes\otimes)
\end{eqnarray*}
by Theorem 4.6 in \cite{AbFt} (if $p<r<+\infty$) or Theorem 4.3 in \cite{AbFt} (if $r=+\infty$). 

Also, by $(\otimes\otimes)$ and (\ref{Campanato1}), we have 
\begin{eqnarray*}
\sum_{j\in\mathbb{Z}}2^j\textit{O}(g,\Omega^j,r')&\leq&2\sum_{j\in\mathbb{Z}}2^j\sum_{k\geq0}|\widetilde{Q^{j,k}}|^{\frac{1}{r}}\left(\int_{\widetilde{Q^{j,k}}}|g(x)-P_{\widetilde{Q^{j,k}}}^\delta(g)(x)|^{r'}dx\right)^{\frac{1}{r'}}\\
&=&2\sum_{j\in\mathbb{Z}}2^j\sum_{k\geq0}|\widetilde{Q^{j,k}}|^{\frac{1}{r}}\int_{\widetilde{Q^{j,k}}}(g(x)-P_{\widetilde{Q^{j,k}}}^{\delta}(g)(x))h_{j,k}(x)dx\\
&=&2\sum_{j\in\mathbb{Z}}2^j\sum_{k\geq0}|\widetilde{Q^{j,k}}|^{\frac{1}{r}}\int_{\widetilde{Q^{j,k}}}g(x)\left(h_{j,k}(x)-P_{\widetilde{Q^{j,k}}}^{\delta}(h_{j,k})(x)\right)dx\\
&=&2\sum_{j\in\mathbb{Z}}2^j\sum_{k\geq0}|\widetilde{Q^{j,k}}|^{\frac{1}{r}}\int_{\widetilde{Q^{j,k}}}g(x)\left(h_{j,k}(x)-P_{\widetilde{Q^{j,k}}}^{\delta}(h_{j,k})(x)\right)\chi_{\widetilde{Q^{j,k}}}(x)dx.
\end{eqnarray*}
Furthermore,  
\begin{eqnarray*}
\left(h_{j,k}(x)-P_{\widetilde{Q^{j,k}}}^{\delta}(h_{j,k})(x)\right)\chi_{\widetilde{Q^{j,k}}}(x)=C_{h_{j,k},Q^{j,k},\delta}^{-1}|\widetilde{Q^{j,k}}|^{\frac{1}{q}-\frac{1}{r}}a_{j,k}(x),
\end{eqnarray*}
by definition of $a_{j,k}$. Hence  
\begin{eqnarray*}
\sum_{j\in\mathbb{Z}}2^j\textit{O}(g,\Omega^j,r')&\leq&2\sum_{j\in\mathbb{Z}}2^j\sum_{k\geq0}|\widetilde{Q^{j,k}}|^{\frac{1}{r}}C_{h_{j,k},Q^{j,k},\delta}^{-1}|\widetilde{Q^{j,k}}|^{\frac{1}{q}-\frac{1}{r}}\int_{\widetilde{Q^{j,k}}}g(x)a_{j,k}(x)dx\\
&\leq&C\sum_{j\in\mathbb{Z}}\sum_{k\geq0}2^j|\widetilde{Q^{j,k}}|^{\frac{1}{q}}\int_{\widetilde{Q^{j,k}}}g(x)a_{j,k}(x)dx\\
&=&C\sum_{j\in\mathbb{Z}}\sum_{k\geq0}\lambda_{j,k}\int_{\widetilde{Q^{j,k}}}g(x)a_{j,k}(x)dx\\
&=&C\sum_{j\in\mathbb{Z}}\sum_{k\geq0}\lambda_{j,k}T(a_{j,k})=CT\left(\sum_{j\in\mathbb{Z}}\sum_{k\geq0}\lambda_{j,k}a_{j,k}\right)\\
&=&C\left|T\left(\sum_{j\in\mathbb{Z}}\sum_{k\geq0}\lambda_{j,k}a_{j,k}\right)\right|\leq C\left\|T\right\|\left\|\sum_{j\in\mathbb{Z}}\sum_{k\geq 0}\lambda_{j,k}a_{j,k}\right\|_{\mathcal{H}^{(q,p)}}\\
&\leq&C\left\|T\right\|\left\|\sum_{j\in\mathbb{Z}}2^{j\eta}\chi_{\Omega^j}\right\|_{\frac{q}{\eta},\frac{p}{\eta}}^{\frac{1}{\eta}},
\end{eqnarray*}
by $(\otimes)$ and $(\otimes\otimes\otimes)$. Therefore, $g\in\mathcal{L}_{r',\phi_1,\delta}^{(q,p,\eta)}$ and $\left\|g\right\|_{\mathcal{L}_{r',\phi_1,\delta}^{(q,p,\eta)}}\leq C\left\|T\right\|$.\\ 

The proof of Theorem \ref{theoremdualqp} will be so complete if we prove (\ref{apresrefexqp}). For this, we consider, for each open subset $\Omega^{j}$ of the family of open subsets $\left\{\Omega^j\right\}_{j\in\mathbb{Z}}$, a family of cubes $\left\{Q^{j,k}\right\}_{k\geq0}$ such that $Q^{j,k}\subset\Omega^{j}$, for all $k\geq0$, $\sum_{k\geq0}\chi_{_{Q^{j,k}}}\leq K(d)$, with $\widetilde{Q^{j,k}}:=C_0Q^{j,k}$. Next, we arbitrarily choose an open subset $\Omega^{j_0}$ in $\left\{\Omega^j\right\}_{j\in\mathbb{Z}}$. Then, arguing as above, we have 
\begin{eqnarray*}
&&2^{j_0}\sum_{k\geq0}|\widetilde{Q^{j_0,k}}|^{\frac{1}{r}}\left(\int_{\widetilde{Q^{j_0,k}}}|g(x)-P_{\widetilde{Q^{j_0,k}}}^\delta(g)(x)|^{r'}dx\right)^{\frac{1}{r'}}\\
&\leq&\sum_{j\in\mathbb{Z}}2^j\sum_{k\geq0}|\widetilde{Q^{j,k}}|^{\frac{1}{r}}\left(\int_{\widetilde{Q^{j,k}}}|g(x)-P_{\widetilde{Q^{j,k}}}^\delta(g)(x)|^{r'}dx\right)^{\frac{1}{r'}}\\
&\leq&C\left\|T\right\|\left\|\sum_{j\in\mathbb{Z}}2^{j\eta}\chi_{\Omega^j}\right\|_{\frac{q}{\eta},\frac{p}{\eta}}^{\frac{1}{\eta}}<+\infty.
\end{eqnarray*} 
Of course, for these above relations, we do not use $(\otimes\otimes)$, which must be justified. Hence  
\begin{eqnarray*}
2^{j_0}\textit{O}(g,\Omega^{j_0},r')&=&2^{j_0}\sup \sum_{k\geq0}|\widetilde{Q^{j_0,k}}|^{\frac{1}{r}}\left(\int_{\widetilde{Q^{j_0,k}}}|g(x)-P_{\widetilde{Q^{j_0,k}}}^\delta(g)(x)|^{r'}dx\right)^{\frac{1}{r'}}\\
&\leq&C\left\|T\right\|\left\|\sum_{j\in\mathbb{Z}}2^{j\eta}\chi_{\Omega^j}\right\|_{\frac{q}{\eta},\frac{p}{\eta}}^{\frac{1}{\eta}}<+\infty.
\end{eqnarray*}
We infer from it that $\textit{O}(g,\Omega^{j_0},r')<+\infty$, which proves (\ref{apresrefexqp}), and therefore $(\otimes\otimes)$. This completes the proof of Theorem \ref{theoremdualqp}.
\end{proof}

\begin{remark}\label{remarqedualeqp1}
Suppose that $0<q\leq1<p<+\infty$. Let $1<r<p'$ , $0<\eta<q$ and $0<\eta_1\leq1$. Then, we have $$\mathcal{L}_{1,\phi_1,\delta}^{(q,p,\eta_1)}\cong\left(\mathcal{H}^{(q,p)}\right)^{\ast}\cong\mathcal{L}_{r,\phi_1,\delta}^{(q,p,\eta)}\hookrightarrow\mathcal{L}_{r,\phi_2,\delta}\cong\left(\mathcal{H}^q\right)^{\ast}$$ and $$\mathcal{L}_{1,\phi_1,\delta}^{(q,p,\eta_1)}\cong\left(\mathcal{H}^{(q,p)}\right)^{\ast}\cong\mathcal{L}_{r,\phi_1,\delta}^{(q,p,\eta)}\hookrightarrow L^{p'}\cong\left(\mathcal{H}^p\right)^{\ast},$$ by Theorem \ref{theoremdualqp}, since $L^p\cong\mathcal{H}^p\hookrightarrow\mathcal{H}^{(q,p)}$. Thus, when $q=1$, we have $$(L^{\infty},\ell^{p'})\hookrightarrow\mathcal{L}_{1,\phi_1,\delta}^{(1,p,\eta_1)}\cong\mathcal{L}_{r,\phi_1,\delta}^{(1,p,\eta)}\hookrightarrow BMO(\mathbb{R}^d)$$ and $$(L^{\infty},\ell^{p'})\hookrightarrow\mathcal{L}_{1,\phi_1,\delta}^{(1,p,\eta_1)}\cong\mathcal{L}_{r,\phi_1,\delta}^{(1,p,\eta)}\hookrightarrow L^{p'},$$ since $\mathcal{H}^1\hookrightarrow\mathcal{H}^{(1,p)}\hookrightarrow(L^1,\ell^p)$ (see \cite{AbFt}, Theorem 3.2 ), $(L^1,\ell^p)^{\ast}\cong(L^{\infty},\ell^{p'})$ and $\mathcal{L}_{r,\phi_2,\delta}\cong(\mathcal{H}^1)^{\ast}\cong BMO(\mathbb{R}^d)$. Moreover, the inclusion of $(L^{\infty},\ell^{p'})$ in $\mathcal{L}_{1,\phi_1,\delta}^{(1,p,\eta_1)}\cong\mathcal{L}_{r,\phi_1,\delta}^{(1,p,\eta)}$ is strict, because $(L^{\infty},\ell^{p'})\subset L^{\infty}$, $\mathcal{P_{\delta}}\not\subset L^{\infty}$ and $\mathcal{P_{\delta}}\subset\mathcal{L}_{1,\phi_1,\delta}^{(1,p,\eta_1)}\cong\mathcal{L}_{r,\phi_1,\delta}^{(1,p,\eta)}$. Then, can one deduce from it that $\mathcal{H}^{(1,p)}\subsetneq(L^1,\ell^p)$? We will deal with this question in an upcoming article. 
\end{remark}

By combining Theorem \ref{theoremdual} with (\ref{dualqp3bisbis}), and Theorem \ref{theoremdualqp}, we obtain the following unified characterization of the topological dual of the Hardy-amalgam spaces $\mathcal{H}^{(q,p)}$, for $0<q\leq1$ and $q\leq p<+\infty$. 

\begin{thm}\label{theoremdualunifie}
Suppose that $0<q\leq1$ and $q\leq p<+\infty$. Let $\max\left\{1,p\right\}<r\leq+\infty$. Then, $\left(\mathcal{H}^{(q,p)}\right)^{\ast}$ is isomorphic to $\mathcal{L}_{r',\phi_1,\delta}^{(q,p,\eta)}$ , where $\frac{1}{r}+\frac{1}{r'}=1$, $0<\eta<q$ if $r<+\infty$, and $0<\eta\leq1$ if $r=+\infty$, with equivalent norms. More precisely, we have the following assertions:
\begin{enumerate}
\item Let $g\in\mathcal{L}_{r',\phi_1,\delta}^{(q,p,\eta)}$. Then, the mapping $$T_g:\mathcal{H}_{fin}^{(q,p)}\ni f\longmapsto\int_{\mathbb{R}^d}g(x)f(x)dx,$$ where $\mathcal{H}_{fin}^{(q,p)}$ is the subspace of $\mathcal{H}^{(q,p)}$ consisting of finite linear combinations of $(q,r,\delta)$-atoms, extends to a unique continuous linear functional $\widetilde{T_g}$ on $\mathcal{H}^{(q,p)}$ such that 
\begin{eqnarray*}
\left\|\widetilde{T_g}\right\|=\left\|T_g\right\|\leq C\left\|g\right\|_{\mathcal{L}_{r',\phi_{1},\delta}^{(q,p,\eta)}}, 
\end{eqnarray*} 
where $C>0$ is a constant independent of $g$. 
\item Conversely, for any $T\in\left(\mathcal{H}^{(q,p)}\right)^{\ast}$, there exists $g\in\mathcal{L}_{r',\phi_1,\delta}^{(q,p,\eta)}$ such that $T=T_g$; namely $$T(f)=\int_{\mathbb{R}^d}g(x)f(x)dx,\ \text{ for all }\ f\in\mathcal{H}_{fin}^{(q,p)},$$ and
\begin{eqnarray*}
\left\|g\right\|_{\mathcal{L}_{r',\phi_{1},\delta}^{(q,p,\eta)}}\leq C\left\|T\right\|, 
\end{eqnarray*} 
where $C>0$ is a constant independent of $T$. 
\end{enumerate}
\end{thm}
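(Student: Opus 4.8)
The plan is to obtain Theorem \ref{theoremdualunifie} by patching together the two regimes already treated, namely $0<q\le p\le 1$ (Theorem \ref{theoremdual}) and $0<q\le 1<p<+\infty$ (Theorem \ref{theoremdualqp}), after checking that in the overlap $0<q\le p\le1$ the space $\mathcal{L}_{r',\phi_1,\delta}^{(q,p,\eta)}$ produced in the second regime is the same (with equivalent norms) as the space $\mathcal{L}_{r',\phi_1,\delta}$ appearing in the first. First I would split on $p$: if $1<p<+\infty$, then $\max\{1,p\}=p$, so the hypothesis $p<r\le+\infty$ of Theorem \ref{theoremdualqp} is exactly the present hypothesis $\max\{1,p\}<r\le+\infty$, and the conclusion is verbatim Theorem \ref{theoremdualqp}; nothing remains to prove. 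If $p\le1$, then $q\le p\le1$ and $\max\{1,p\}=1$, so the hypothesis becomes $1<r\le+\infty$, which is precisely the hypothesis of Theorem \ref{theoremdual}.

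In the case $p\le1$, I would invoke Theorem \ref{theoremdual}, which gives $(\mathcal{H}^{(q,p)})^{\ast}\cong\mathcal{L}_{r',\phi_1,\delta}$ with equivalent norms, realized through exactly the same pairing $T_g(f)=\int_{\mathbb{R}^d}g(x)f(x)\,dx$ on $\mathcal{H}_{fin}^{(q,p)}$. It then remains only to identify $\mathcal{L}_{r',\phi_1,\delta}$ with $\mathcal{L}_{r',\phi_1,\delta}^{(q,p,\eta)}$ as normed spaces, for the allowed values of $\eta$ (here $0<\eta<q$ if $r<+\infty$, $0<\eta\le1$ if $r=+\infty$; in either case $\eta\le1$ since $q\le1$). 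But this identification is already recorded in the excerpt: the equivalence (\ref{dualqp3bisbis}) states $\|g\|_{\mathcal{L}_{r',\phi_1,\delta}^{(q,p,\eta)}}\approx\|g\|_{\mathcal{L}_{r',\phi_1,\delta}}$ whenever $0<q,p\le1$ and $0<\eta\le1$, with $\mathcal{L}_{r',\phi_1,\delta}^{(q,p,\eta)}=\mathcal{L}_{r',\phi_1,\delta}$ as sets. Composing the isomorphism of Theorem \ref{theoremdual} with this norm equivalence yields $(\mathcal{H}^{(q,p)})^{\ast}\cong\mathcal{L}_{r',\phi_1,\delta}^{(q,p,\eta)}$ with equivalent norms, and since the pairing is literally unchanged, assertions (1) and (2) transfer directly: the bound $\|T_g\|\le C\|g\|_{\mathcal{L}_{r',\phi_1,\delta}}$ from Theorem \ref{theoremdual}(1) combines with (\ref{dualqp3}) (which gives $\|g\|_{\mathcal{L}_{r',\phi_1,\delta}}\le\|g\|_{\mathcal{L}_{r',\phi_1,\delta}^{(q,p,\eta)}}$) to produce $\|T_g\|\le C\|g\|_{\mathcal{L}_{r',\phi_1,\delta}^{(q,p,\eta)}}$, and the bound $\|g\|_{\mathcal{L}_{r',\phi_1,\delta}}\le C\|T\|$ from Theorem \ref{theoremdual}(2) combines with (\ref{dualqp3bis}) (which gives $\|g\|_{\mathcal{L}_{r',\phi_1,\delta}^{(q,p,\eta)}}\le C(d,q,p)\|g\|_{\mathcal{L}_{r',\phi_1,\delta}}$) to produce $\|g\|_{\mathcal{L}_{r',\phi_1,\delta}^{(q,p,\eta)}}\le C\|T\|$.

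Thus the whole argument is a case distinction plus bookkeeping, and there is essentially no new analytic content: the two hard theorems (\ref{theoremdual}) and (\ref{theoremdualqp}) do the work, and (\ref{dualqp3bisbis}) bridges the notational gap in the overlap region. The only point requiring a moment's care — and the closest thing to an obstacle — is to make sure the range of $\eta$ quantified in the unified statement is consistent with the constraint $0<\eta\le1$ needed for (\ref{dualqp3bisbis}) to apply in the $p\le1$ case: since $q\le1$ there, the condition $0<\eta<q$ forces $\eta<1$, and the condition $0<\eta\le1$ is exactly what (\ref{dualqp3bisbis}) asks, so there is no conflict. One should also remark that $\mathcal{H}_{fin}^{(q,p)}$ is the same dense subspace in both theorems and that the representative $g$ is taken modulo $\mathcal{P_\delta}$ in both spaces, so the isomorphism is canonical. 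Writing this out:

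\begin{proof}
We distinguish two cases according to the value of $p$.

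\emph{Case $1<p<+\infty$.} Then $\max\{1,p\}=p$, so the assumption $\max\{1,p\}<r\le+\infty$ reads $p<r\le+\infty$, and the stated range of $\eta$ coincides with the one in Theorem \ref{theoremdualqp}. Hence the conclusion is exactly Theorem \ref{theoremdualqp}.

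\emph{Case $q\le p\le1$.} Then $\max\{1,p\}=1$, so the assumption on $r$ reads $1<r\le+\infty$, which is the hypothesis of Theorem \ref{theoremdual}. Moreover, since $q\le1$, both admissible ranges for $\eta$ (that is, $0<\eta<q$ if $r<+\infty$ and $0<\eta\le1$ if $r=+\infty$) are contained in $(0,1]$, so by (\ref{dualqp3bisbis}) we have $\mathcal{L}_{r',\phi_1,\delta}^{(q,p,\eta)}=\mathcal{L}_{r',\phi_1,\delta}$ as sets, with
\begin{eqnarray}
\|g\|_{\mathcal{L}_{r',\phi_1,\delta}^{(q,p,\eta)}}\approx\|g\|_{\mathcal{L}_{r',\phi_1,\delta}}. \label{unifiebridge}
\end{eqnarray}
By Theorem \ref{theoremdual}, $\left(\mathcal{H}^{(q,p)}\right)^{\ast}$ is isomorphic to $\mathcal{L}_{r',\phi_1,\delta}$ with equivalent norms, via the pairing $T_g(f)=\int_{\mathbb{R}^d}g(x)f(x)\,dx$ on $\mathcal{H}_{fin}^{(q,p)}$; composing with (\ref{unifiebridge}) gives the claimed isomorphism $\left(\mathcal{H}^{(q,p)}\right)^{\ast}\cong\mathcal{L}_{r',\phi_1,\delta}^{(q,p,\eta)}$ with equivalent norms, realized by the same pairing. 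It remains to record the two quantitative assertions. For (1), if $g\in\mathcal{L}_{r',\phi_1,\delta}^{(q,p,\eta)}=\mathcal{L}_{r',\phi_1,\delta}$, then by Theorem \ref{theoremdual}(1) and (\ref{dualqp3}),
\begin{eqnarray*}
\left\|\widetilde{T_g}\right\|=\left\|T_g\right\|\le C\|g\|_{\mathcal{L}_{r',\phi_1,\delta}}\le C\|g\|_{\mathcal{L}_{r',\phi_1,\delta}^{(q,p,\eta)}}.
\end{eqnarray*}
For (2), given $T\in\left(\mathcal{H}^{(q,p)}\right)^{\ast}$, Theorem \ref{theoremdual}(2) provides $g\in\mathcal{L}_{r',\phi_1,\delta}$ with $T(f)=\int_{\mathbb{R}^d}g(x)f(x)\,dx$ for all $f\in\mathcal{H}_{fin}^{(q,p)}$ and $\|g\|_{\mathcal{L}_{r',\phi_1,\delta}}\le C\|T\|$; then $g\in\mathcal{L}_{r',\phi_1,\delta}^{(q,p,\eta)}$ by (\ref{dualqp3bis}), and
\begin{eqnarray*}
\|g\|_{\mathcal{L}_{r',\phi_1,\delta}^{(q,p,\eta)}}\le C(d,q,p)\|g\|_{\mathcal{L}_{r',\phi_1,\delta}}\le C\|T\|.
\end{eqnarray*}
This completes the proof.
\end{proof}
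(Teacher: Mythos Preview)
Your proposal is correct and takes essentially the same approach as the paper: the paper simply states that the theorem follows ``by combining Theorem \ref{theoremdual} with (\ref{dualqp3bisbis}), and Theorem \ref{theoremdualqp}'', and you have carried out precisely this case split (on $p\le1$ versus $1<p$) together with the norm-equivalence bookkeeping. Your explicit verification that the admissible $\eta$-ranges are contained in $(0,1]$ in the $p\le1$ case is exactly the content of the paper's closing remark that, when $p\le1$, one may take $0<\eta\le1$ regardless of $r$.
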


Notice that in Theorem \ref{theoremdualunifie}, regardless of $\max\left\{1,p\right\}<r\leq+\infty$, we can take $0<\eta\leq1$, when $p\leq1$.\\

Some applications of the results of this section will be given in forthcoming work, in particular the boundedness of Calder\'on-Zygmund operators and convolution operators from $(L^{\infty},\ell^{p'})$ to $\mathcal{L}_{r,\phi_{1},\delta}^{(1,p,\eta)}$, but also from $\mathcal{L}_{r,\phi_{1},\delta}^{(1,p,\eta)}$ to itself, for  $1\leq p<+\infty$, $1\leq r<p'$ and $0<\eta<1$ if $1<r<p'$, or $0<\eta\leq1$ if $r=1$, generalizing classical results between $L^{\infty}$ and $BMO(\mathbb{R}^d)$.

\section{Duality $\mathcal{H}_{\mathrm{loc}}^{(q,p)}-\mathcal{L}_{r,\phi_{1},\delta}^{(q,p,\eta)\mathrm{loc}}$}

As in the preceding section, we give a characterization of the topological dual of $\mathcal{H}_{\mathrm{loc}}^{(q,p)}$ for $0<q\leq1<p<+\infty$. Furthermore, a unified characterization of the dual of $\mathcal{H}_{\mathrm{loc}}^{(q,p)}$, when $0<q\leq1$ and $q\leq p<+\infty$ is obtained.\\ 

Let $1\leq r<+\infty$, $g$ be a function in $L_{\mathrm{loc}}^r$ and $\Omega$ be an open subset such that $\Omega\neq\mathbb{R}^d$. We define $$\textit{O}(g,\Omega,r)^{\mathrm{loc}}:=$$ $$\sup\left[\sum_{|\widetilde{Q^n}|<1}|\widetilde{Q^n}|^{\frac{1}{r'}}\left(\int_{\widetilde{Q^n}}\left|g(x)-P_{\widetilde{Q^n}}^{\delta}(g)(x)\right|^{r}dx\right)^{\frac{1}{r}}+\sum_{|\widetilde{Q^n}|\geq1}|\widetilde{Q^n}|^{\frac{1}{r'}}\left(\int_{\widetilde{Q^n}}|g(x)|^{r}dx\right)^{\frac{1}{r}}\right],$$ where $\frac{1}{r}+\frac{1}{r'}=1$ and the supremum is taken over all families of cubes $\left\{Q^n\right\}_{n\geq0}$ such that $Q^n\subset\Omega$, for all $n\geq0$ and $\sum_{n\geq0}\chi_{_{Q^n}}\leq K(d)$, with $\widetilde{Q^n}=C_0Q^n$, where $K(d)>0$ and $C_0>1$ are the same constants as in the definition of $\textit{O}(g,\Omega,r)$.   

For any open cube $Q$ such that $|Q|<1$, by arguing as in the case of $\textit{O}(g,Q,r)$, we have, by definition of $\textit{O}(g,Q,r)^{\mathrm{loc}}$,  
\begin{eqnarray*}
\textit{O}(g,Q,r)^{\mathrm{loc}}&\geq&|Q|^{\frac{1}{r'}}\left(\int_{Q}\left|g(x)-P_{Q}^{\delta}(g)(x)\right|^{r}dx\right)^{\frac{1}{r}}\\
&=&\left\|\chi_{Q}\right\|_{q,p}\left[\frac{|Q|}{\left\|\chi_{Q}\right\|_{q,p}}\left(\frac{1}{|Q|}\int_{Q}\left|g(x)-P_{Q}^{\delta}(g)(x)\right|^{r}dx\right)^{\frac{1}{r}}\right]. 
\end{eqnarray*} 
Likewise, for any open cube $Q$ such that $|Q|\geq1$, we have   
\begin{eqnarray*}
\textit{O}(g,Q,r)^{\mathrm{loc}}&\geq&|Q|^{\frac{1}{r'}}\left(\int_{Q}|g(x)|^{r}dx\right)^{\frac{1}{r}}\\
&=&\left\|\chi_{Q}\right\|_{q,p}\left[\frac{|Q|}{\left\|\chi_{Q}\right\|_{q,p}}\left(\frac{1}{|Q|}\int_{Q}|g(x)|^{r}dx\right)^{\frac{1}{r}}\right].
\end{eqnarray*} 
Therefore,  
\begin{eqnarray*}
\left\|g\right\|_{\mathcal{L}_{r,\phi_{1},\delta}^{\mathrm{loc}}}&=&\sup_{\underset{|Q|<1}{\underset{Q\text{ open}}{Q\in\mathcal{Q}}}}\frac{1}{\phi_{1}(Q)}\left(\frac{1}{|Q|}\int_{Q}\left|g(x)-P_{Q}^{\delta}(g)(x)\right|^{r}dx\right)^{\frac{1}{r}}\\
&+&\sup_{\underset{|Q|\geq1}{\underset{Q\text{ open}}{Q\in\mathcal{Q}}}}\frac{1}{\phi_{1}(Q)}\left(\frac{1}{|Q|}\int_{Q}|g(x)|^{r}dx\right)^{\frac{1}{r}}\leq\sup_{\underset{|Q|<1}{\underset{Q\text{ open}}{Q\in\mathcal{Q}}}}\frac{\textit{O}(g,Q,r)^{\mathrm{loc}}}{\left\|\chi_{Q}\right\|_{q,p}}\\
&+&\sup_{\underset{|Q|\geq1}{\underset{Q\text{ open}}{Q\in\mathcal{Q}}}}\frac{\textit{O}(g,Q,r)^{\mathrm{loc}}}{\left\|\chi_{Q}\right\|_{q,p}}\leq 2\sup_{\underset{Q\text{ open}}{Q\in\mathcal{Q}}}\frac{\textit{O}(g,Q,r)^{\mathrm{loc}}}{\left\|\chi_{Q}\right\|_{q,p}}\cdot\ \ (\bullet)
\end{eqnarray*}

\begin{defn}
Suppose that $0<q\leq1$ and $0<p<+\infty$. Let $0<\eta<+\infty$ and $1\leq r<+\infty$. We say that a function $g$ in $L_{\mathrm{loc}}^r$ belongs to $\mathcal{L}_{r,\phi_{1},\delta}^{(q,p,\eta)\mathrm{loc}}:=\mathcal{L}_{r,\phi_{1},\delta}^{(q,p,\eta)\mathrm{loc}}(\mathbb{R}^d)$ if there exists a constante $C>0$ such that, for all families of open subsets $\left\{\Omega^j\right\}_{j\in\mathbb{Z}}$ with $\left\|\sum_{j\in\mathbb{Z}}2^{j\eta}\chi_{\Omega^j}\right\|_{\frac{q}{\eta},\frac{p}{\eta}}<+\infty$, we have
\begin{eqnarray}
\sum_{j\in\mathbb{Z}}2^j\textit{O}(g,\Omega^j,r)^{\mathrm{loc}}\leq C\left\|\sum_{j\in\mathbb{Z}}2^{j\eta}\chi_{\Omega^j}\right\|_{\frac{q}{\eta},\frac{p}{\eta}}^{\frac{1}{\eta}}. \label{dualqploc}
\end{eqnarray} 
\end{defn}
 
The set $\mathcal{L}_{r,\phi_{1},\delta}^{(q,p,\eta)\mathrm{loc}}$ is not empty, since the null function is clearly an element of it. We define $\left\|g\right\|_{\mathcal{L}_{r,\phi_{1},\delta}^{(q,p,\eta)\mathrm{loc}}}:=\inf\left\{C>0:\ C \text{ satisfies } (\ref{dualqploc})\right\}$, when $g\in\mathcal{L}_{r,\phi_{1},\delta}^{(q,p,\eta)\mathrm{loc}}$. One can see easily that $\left\|\cdot\right\|_{\mathcal{L}_{r,\phi_{1},\delta}^{(q,p,\eta)\mathrm{loc}}}$ defines a norm on $\mathcal{L}_{r,\phi_{1},\delta}^{(q,p,\eta)\mathrm{loc}}$. From the definition of $\left\|g\right\|_{\mathcal{L}_{r,\phi_{1},\delta}^{(q,p,\eta)\mathrm{loc}}}$ , we have, for all families of open subsets $\left\{\Omega^j\right\}_{j\in\mathbb{Z}}$ of $\mathcal{O}_{\mathbb{Z},q,p,\eta}$,
\begin{eqnarray}
\sum_{j\in\mathbb{Z}}2^j\textit{O}(g,\Omega^j,r)^{\mathrm{loc}}\leq\left\|g\right\|_{\mathcal{L}_{r,\phi_{1},\delta}^{(q,p,\eta)\mathrm{loc}}}\left\|\sum_{j\in\mathbb{Z}}2^{j\eta}\chi_{\Omega^j}\right\|_{\frac{q}{\eta},\frac{p}{\eta}}^{\frac{1}{\eta}}. \label{dualqploc2}
\end{eqnarray}
Thus, if $g\in\mathcal{L}_{r,\phi_{1},\delta}^{(q,p,\eta)\mathrm{loc}}$ , with $0<q\leq1$ and $q\leq p<+\infty$, then 
\begin{eqnarray}
\left\|g\right\|_{\mathcal{L}_{r,\phi_{2},\delta}^{\mathrm{loc}}}\leq\left\|g\right\|_{\mathcal{L}_{r,\phi_{1},\delta}^{\mathrm{loc}}}\leq2\left\|g\right\|_{\mathcal{L}_{r,\phi_{1},\delta}^{(q,p,\eta)\mathrm{loc}}}, \label{dualqploc3}
\end{eqnarray}
and so $\mathcal{L}_{r,\phi_{1},\delta}^{(q,p,\eta)\mathrm{loc}}\hookrightarrow\mathcal{L}_{r,\phi_{1},\delta}^{\mathrm{loc}}\hookrightarrow\mathcal{L}_{r,\phi_{2},\delta}^{\mathrm{loc}}$. In fact,  
\begin{eqnarray*}
\left\|g\right\|_{\mathcal{L}_{r,\phi_{2},\delta}^{\mathrm{loc}}}\leq\left\|g\right\|_{\mathcal{L}_{r,\phi_{1},\delta}^{\mathrm{loc}}}\leq2\sup_{\underset{Q \text{ open}}{Q\in\mathcal{Q}}}\frac{\textit{O}(g,Q,r)^{\mathrm{loc}}}{\left\|\chi_{Q}\right\|_{q,p}}\leq2\left\|g\right\|_{\mathcal{L}_{r,\phi_{1},\delta}^{(q,p,\eta)\mathrm{loc}}},
\end{eqnarray*}
by $(\bullet)$ and the fact that, for any open cube $Q$, we have $\textit{O}(g,Q,r)^{\mathrm{loc}}\leq\left\|g\right\|_{\mathcal{L}_{r,\phi_{1},\delta}^{(q,p,\eta)\mathrm{loc}}}\left\|\chi_{Q}\right\|_{q,p}$. 

As for (\ref{dualqp3}), the inequality on the right in (\ref{dualqploc3}) is valid for $0<q\leq1$ and $0<p<+\infty$. Moreover, when $0<q,p\leq1$ and $0<\eta\leq1$, we have  
\begin{eqnarray}
\left\|g\right\|_{\mathcal{L}_{r,\phi_{1},\delta}^{(q,p,\eta)\mathrm{loc}}}\leq C(d,q,p)\left\|g\right\|_{\mathcal{L}_{r,\phi_{1},\delta}^{\mathrm{loc}}}, \label{dualqploc3bis}
\end{eqnarray}
for any $g\in\mathcal{L}_{r,\phi_{1},\delta}^{\mathrm{loc}}$ , and hence  
\begin{eqnarray}
\left\|g\right\|_{\mathcal{L}_{r,\phi_{1},\delta}^{(q,p,\eta)\mathrm{loc}}}\approx\left\|g\right\|_{\mathcal{L}_{r,\phi_{1},\delta}^{\mathrm{loc}}}, \label{dualqploc3bisbis}
\end{eqnarray}
when $0<q,p\leq1$ and $0<\eta\leq1$, by (\ref{dualqploc3}) and (\ref{dualqploc3bis}). Thus, $\mathcal{L}_{r,\phi_{1},\delta}^{(q,p,\eta)\mathrm{loc}}=\mathcal{L}_{r,\phi_{1},\delta}^{\mathrm{loc}}$ , with equivalent norms, whenever $0<q,p\leq1$ and $0<\eta\leq1$. 
The proof of (\ref{dualqploc3bis}) is similar to the one of (\ref{dualqp3bis}). 

It is also straightforward to see that, for $0<q\leq1$ and $0<p<+\infty$, 
\begin{eqnarray}
\left\|g\right\|_{\mathcal{L}_{r,\phi_{1},\delta}^{(q,p,\eta)}}\leq(1+C)\left\|g\right\|_{\mathcal{L}_{r,\phi_{1},\delta}^{(q,p,\eta)\mathrm{loc}}}, \label{dualqploc3bisbisbis}
\end{eqnarray}
for every $g\in\mathcal{L}_{r,\phi_{1},\delta}^{(q,p,\eta)\mathrm{loc}}$, where the constant $C>0$ is the one of Remark \ref{0laremarquetile0}, and so $\mathcal{L}_{r,\phi_{1},\delta}^{(q,p,\eta)\mathrm{loc}}\hookrightarrow\mathcal{L}_{r,\phi_{1},\delta}^{(q,p,\eta)}$.\\

Our duality result for $\mathcal{H}_{\mathrm{loc}}^{(q,p)}$, with $0<q\leq 1<p<+\infty$, can be stated as follows.\\

\begin{thm} \label{theoremdualqploc}
Suppose that $0<q\leq1<p<+\infty$. Let $p<r\leq+\infty$. Then, $\left(\mathcal{H}_{\mathrm{loc}}^{(q,p)}\right)^{\ast}$ is isomorphic to $\mathcal{L}_{r',\phi_1,\delta}^{(q,p,\eta)\mathrm{loc}}$ , where $\frac{1}{r}+\frac{1}{r'}=1$, $0<\eta<q$ if $r<+\infty$, and $0<\eta\leq1$ if $r=+\infty$, with equivalent norms. More precisely, we have the following assertions:  
\begin{enumerate}
\item Let $g\in\mathcal{L}_{r',\phi_1,\delta}^{(q,p,\eta)\mathrm{loc}}$. Then, the mapping $$T_g:\mathcal{H}_{\mathrm{loc},fin}^{(q,p)}\ni f\longmapsto\int_{\mathbb{R}^d}g(x)f(x)dx,$$ where $\mathcal{H}_{\mathrm{loc},fin}^{(q,p)}$ is the subspace of $\mathcal{H}_{\mathrm{loc}}^{(q,p)}$ consisting of finite linear combinations of $(q,r,\delta)$-atoms, extends to a unique continuous linear functional  
$\widetilde{T_g}$ on $\mathcal{H}_{\mathrm{loc}}^{(q,p)}$ such that  
\begin{eqnarray*}
\left\|\widetilde{T_g}\right\|=\left\|T_g\right\|\leq C\left\|g\right\|_{\mathcal{L}_{r',\phi_{1},\delta}^{(q,p,\eta)\mathrm{loc}}}, 
\end{eqnarray*} 
where $C>0$ is a constant independent of $g$. \label{dualpointqploc1}
\item Conversely, for any $T\in\left(\mathcal{H}_{\mathrm{loc}}^{(q,p)}\right)^{\ast}$, there exists $g\in\mathcal{L}_{r',\phi_1,\delta}^{(q,p,\eta)\mathrm{loc}}$ such that $T=T_g$; namely $$T(f)=\int_{\mathbb{R}^d}g(x)f(x)dx,\ \text{ for all }\ f\in\mathcal{H}_{\mathrm{loc},fin}^{(q,p)},$$ and
\begin{eqnarray*}
\left\|g\right\|_{\mathcal{L}_{r',\phi_{1},\delta}^{(q,p,\eta)\mathrm{loc}}}\leq C\left\|T\right\|, 
\end{eqnarray*} 
where $C>0$ is a constant independent of $T$. \label{dualpointqploc2}
\end{enumerate}
\end{thm}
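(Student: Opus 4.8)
The plan is to transcribe the proof of Theorem~\ref{theoremdualqp} to the local setting, carrying along throughout the dichotomy between cubes $\widetilde{Q^n}$ with $|\widetilde{Q^n}|<1$ (which contribute to $\textit{O}(\cdot,\cdot,\cdot)^{\mathrm{loc}}$ through the cancellation term $|\widetilde{Q^n}|^{1/r'}(\int_{\widetilde{Q^n}}|g-P_{\widetilde{Q^n}}^{\delta}(g)|^{r})^{1/r}$, for which the relevant atoms carry vanishing moments) and those with $|\widetilde{Q^n}|\geq1$ (which contribute through the plain term $|\widetilde{Q^n}|^{1/r'}(\int_{\widetilde{Q^n}}|g|^{r})^{1/r}$, for which the relevant local atoms carry no moment condition). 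The base case is the construction of a locally integrable representative from the proof of Theorem~\ref{theoremdualloc} (\cite{AbFt2}), together with the embedding $\mathcal{L}_{r',\phi_1,\delta}^{(q,p,\eta)\mathrm{loc}}\hookrightarrow\mathcal{L}_{r',\phi_2,\delta}^{\mathrm{loc}}$ of (\ref{dualqploc3}), where $\mathcal{L}_{r',\phi_2,\delta}^{\mathrm{loc}}$ is the topological dual of the local Hardy space $\mathcal{H}_{\mathrm{loc}}^q$.

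For assertion (\ref{dualpointqploc1}), let $g\in\mathcal{L}_{r',\phi_1,\delta}^{(q,p,\eta)\mathrm{loc}}$ and $f\in\mathcal{H}_{\mathrm{loc},fin}^{(q,p)}$. First I would write $f=\sum_{j}\sum_{k\geq0}\lambda_{j,k}a_{j,k}$, a.e. and in $\mathcal{S}'$, using the local atomic decomposition from the proofs of Theorems 3.2, 3.3 and 3.9 in \cite{AbFt2}, with $\lambda_{j,k}=C_1 2^{j}|\widetilde{Q^{j,k}}|^{1/q}$, $(a_{j,k},\widetilde{Q^{j,k}})\in\mathcal{A}_{\mathrm{loc}}(q,\infty,\delta)$, $\widetilde{Q^{j,k}}=C_0 Q^{j,k}$, $Q^{j,k}\subset\mathcal{O}^j$, $\sum_{k\geq0}\chi_{Q^{j,k}}\leq K(d)$ and $\|\sum_{j}2^{j\eta}\chi_{\mathcal{O}^j}\|_{q/\eta,p/\eta}^{1/\eta}\approx\|f\|_{\mathcal{H}_{\mathrm{loc}}^{(q,p)}}$ for some $0<\eta\leq1$. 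Since $\mathcal{H}_{\mathrm{loc},fin}^{(q,p)}\subset\mathcal{H}_{\mathrm{loc}}^q$, this decomposition also converges (unconditionally) in $\mathcal{H}_{\mathrm{loc}}^q$, each $(a_{j,k},\widetilde{Q^{j,k}})$ lies in $\mathcal{A}_{\mathrm{loc}}(q,r,\delta)$ as well, and $g\in\mathcal{L}_{r',\phi_2,\delta}^{\mathrm{loc}}=(\mathcal{H}_{\mathrm{loc}}^q)^{\ast}$; hence $\int_{\mathbb{R}^d}gf\,dx=\sum_{j}\sum_{k\geq0}\lambda_{j,k}\int_{\mathbb{R}^d}ga_{j,k}\,dx$. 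I would then estimate each $\int ga_{j,k}$ by Hölder's inequality, subtracting $P_{\widetilde{Q^{j,k}}}^{\delta}(g)$ first when $|\widetilde{Q^{j,k}}|<1$ (legitimate by the moment conditions of $a_{j,k}$) and not subtracting anything when $|\widetilde{Q^{j,k}}|\geq1$; in both cases $|\lambda_{j,k}|\,|\int ga_{j,k}|\leq C_1 2^j|\widetilde{Q^{j,k}}|^{1/r}\omega_{j,k}$, where $\omega_{j,k}$ is the $L^{r'}$-norm of $g-P_{\widetilde{Q^{j,k}}}^{\delta}(g)$ over $\widetilde{Q^{j,k}}$ in the first case and of $g$ over $\widetilde{Q^{j,k}}$ in the second. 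Summing over $k$ (using $\sum_k\chi_{Q^{j,k}}\leq K(d)$) bounds the $j$-th slice by $C_1 2^j\textit{O}(g,\mathcal{O}^j,r')^{\mathrm{loc}}$, and then (\ref{dualqploc2}) and the norm-equivalence above give $|T_g(f)|\leq C\|g\|_{\mathcal{L}_{r',\phi_1,\delta}^{(q,p,\eta)\mathrm{loc}}}\|f\|_{\mathcal{H}_{\mathrm{loc}}^{(q,p)}}$. Since $\mathcal{H}_{\mathrm{loc},fin}^{(q,p)}$ is dense in $\mathcal{H}_{\mathrm{loc}}^{(q,p)}$, $T_g$ extends to a unique $\widetilde{T_g}$ with $\|\widetilde{T_g}\|=\|T_g\|\leq C\|g\|_{\mathcal{L}_{r',\phi_1,\delta}^{(q,p,\eta)\mathrm{loc}}}$.

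For assertion (\ref{dualpointqploc2}), given $T\in(\mathcal{H}_{\mathrm{loc}}^{(q,p)})^{\ast}$, the proof of Theorem~\ref{theoremdualloc} furnishes $g\in\mathcal{L}_{r',\phi_1,\delta}^{\mathrm{loc}}\subset L_{\mathrm{loc}}^{r'}$ with $T(f)=\int_{\mathbb{R}^d}gf\,dx$ for every $f\in\mathcal{H}_{\mathrm{loc},fin}^{(q,p)}$, and it remains to show $g\in\mathcal{L}_{r',\phi_1,\delta}^{(q,p,\eta)\mathrm{loc}}$ with $\|g\|_{\mathcal{L}_{r',\phi_1,\delta}^{(q,p,\eta)\mathrm{loc}}}\leq C\|T\|$. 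Fix a family $\{\Omega^j\}_{j\in\mathbb{Z}}$ with $\|\sum_{j}2^{j\eta}\chi_{\Omega^j}\|_{q/\eta,p/\eta}<+\infty$. As for (\ref{apresrefexqp}), one first checks that each $\textit{O}(g,\Omega^j,r')^{\mathrm{loc}}$ is finite, by isolating a single index $j_0$ and running the estimate below (which at that stage does not require the near-optimality choice of cubes); then one may select, for each $j$, cubes $\{Q^{j,k}\}_{k\geq0}$ with $Q^{j,k}\subset\Omega^j$, $\sum_k\chi_{Q^{j,k}}\leq K(d)$, nearly realizing the supremum defining $\textit{O}(g,\Omega^j,r')^{\mathrm{loc}}$, together with norming functions $h_{j,k}\in L^r(\widetilde{Q^{j,k}})$, $\|h_{j,k}\|_{L^r(\widetilde{Q^{j,k}})}=1$: for $|\widetilde{Q^{j,k}}|<1$ one takes $h_{j,k}$ norming $g-P_{\widetilde{Q^{j,k}}}^{\delta}(g)$ and builds the $(q,r,\delta)$-atom $a_{j,k}=C_{h_{j,k},Q^{j,k},\delta}|\widetilde{Q^{j,k}}|^{1/r-1/q}(h_{j,k}-P_{\widetilde{Q^{j,k}}}^{\delta}(h_{j,k}))\chi_{\widetilde{Q^{j,k}}}$ as in Theorem~\ref{theoremdualqp}, while for $|\widetilde{Q^{j,k}}|\geq1$ one takes $h_{j,k}$ norming $g$ on $\widetilde{Q^{j,k}}$ and builds the \emph{local} $(q,r,\delta)$-atom $a_{j,k}=\tfrac{1}{2}|\widetilde{Q^{j,k}}|^{1/r-1/q}h_{j,k}\chi_{\widetilde{Q^{j,k}}}$ (no moment condition is needed there). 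Setting $\lambda_{j,k}=2^j|\widetilde{Q^{j,k}}|^{1/q}$, an application of Proposition~\ref{operamaxima} with some $\gamma>\max\{1,\eta/q\}$, followed by the local reconstruction theorem of \cite{AbFt2} (the $(q,r,\delta)$-version if $p<r<+\infty$, the $(q,\infty,\delta)$-version if $r=+\infty$, which is exactly where the restriction $0<\eta<q$, resp. $0<\eta\leq1$, is used), yields $\sum_{j}\sum_{k\geq0}\lambda_{j,k}a_{j,k}\in\mathcal{H}_{\mathrm{loc}}^{(q,p)}$ with $\|\sum_{j,k}\lambda_{j,k}a_{j,k}\|_{\mathcal{H}_{\mathrm{loc}}^{(q,p)}}\leq C\|\sum_{j}2^{j\eta}\chi_{\Omega^j}\|_{q/\eta,p/\eta}^{1/\eta}$. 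Finally, unwinding the definition of $a_{j,k}$, using (\ref{Campanato1}) and $\int ga_{j,k}=T(a_{j,k})$, and then the continuity of $T$, one obtains $\sum_{j}2^j\textit{O}(g,\Omega^j,r')^{\mathrm{loc}}\leq C\,|T(\sum_{j,k}\lambda_{j,k}a_{j,k})|\leq C\|T\|\,\|\sum_{j,k}\lambda_{j,k}a_{j,k}\|_{\mathcal{H}_{\mathrm{loc}}^{(q,p)}}\leq C\|T\|\,\|\sum_{j}2^{j\eta}\chi_{\Omega^j}\|_{q/\eta,p/\eta}^{1/\eta}$, whence $\|g\|_{\mathcal{L}_{r',\phi_1,\delta}^{(q,p,\eta)\mathrm{loc}}}\leq C\|T\|$; together with (\ref{dualpointqploc1}) this gives the asserted isomorphism.

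The step I expect to be the main obstacle is keeping the two families of atoms — those on small cubes (with cancellation) and those on large cubes (without) — in harness: in assertion (\ref{dualpointqploc1}) one must see that the pairing of $g$ with an atom supported on a large cube is controlled \emph{exactly} by the plain term $|\widetilde{Q^{j,k}}|^{1/r'}(\int_{\widetilde{Q^{j,k}}}|g|^{r'})^{1/r'}$ of $\textit{O}(\cdot,\cdot,r')^{\mathrm{loc}}$ (there being no cancellation term available there), and in assertion (\ref{dualpointqploc2}) one must produce the single norm estimate $\|\sum_{j,k}\lambda_{j,k}a_{j,k}\|_{\mathcal{H}_{\mathrm{loc}}^{(q,p)}}\leq C\|\sum_{j}2^{j\eta}\chi_{\Omega^j}\|_{q/\eta,p/\eta}^{1/\eta}$ uniformly over the mixed collection, checking in particular that the local reconstruction theorem accepts both types of atoms simultaneously. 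The finiteness of $\textit{O}(g,\Omega^j,r')^{\mathrm{loc}}$ is, as in Section~3, slightly delicate since it must precede the near-optimality choice of cubes, but it is handled by the same single-index device used for (\ref{apresrefexqp}). Everything else is a routine adaptation of Section~3 with ``$\mathrm{loc}$'' superscripts inserted and the size dichotomy carried along.
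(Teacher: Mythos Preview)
Your proposal is correct and follows essentially the same route as the paper's own proof: the atomic decomposition from \cite{AbFt2} together with the size dichotomy $|\widetilde{Q^{j,k}}|<1$ versus $|\widetilde{Q^{j,k}}|\geq1$ for assertion~(\ref{dualpointqploc1}), and the representation of $T$ via $g\in\mathcal{L}_{r',\phi_1,\delta}^{\mathrm{loc}}$ from the proof of Theorem~\ref{theoremdualloc} followed by the norming-function/atom construction for assertion~(\ref{dualpointqploc2}). The only cosmetic difference is that in (\ref{dualpointqploc2}) the paper treats the small-cube and large-cube atomic sums as two separate elements of $\mathcal{H}_{\mathrm{loc}}^{(q,p)}$ and applies $T$ to each, whereas you propose a single mixed sum; both are valid since the local reconstruction theorems of \cite{AbFt2} accept arbitrary $(q,r,\delta)$-local atoms and the resulting constants can be absorbed uniformly.
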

\begin{proof}
Let $g\in\mathcal{L}_{r',\phi_1,\delta}^{(q,p,\eta)\mathrm{loc}}$. Let $f\in\mathcal{H}_{\mathrm{loc},fin}^{(q,p)}$, where $\mathcal{H}_{\mathrm{loc},fin}^{(q,p)}$ is the subspace of $\mathcal{H}_{\mathrm{loc}}^{(q,p)}$ consisting of finite linear combinations of $(q,r,\delta)$-atoms. Then, $f=\sum_{j=-\infty}^{+\infty}\sum_{k\geq0}\lambda_{j,k}a_{j,k}$ almost everywhere and in the sense of $\mathcal{S'}$, with $\lambda_{j,k}=C_{1}2^j|\widetilde{Q^{j,k}}|^{\frac{1}{q}}$, $\left\|a_{j,k}\right\|_{L^\infty}\leq|\widetilde{Q^{j,k}}|^{-\frac{1}{q}}$, $\left(a_{j,k},\widetilde{Q^{j,k}}\right)\in\mathcal{A}_{\mathrm{loc}}(q,\infty,\delta)$, $\widetilde{Q^{j,k}}=C_0Q^{j,k}$, $Q^{j,k}\subset\mathcal{O}^j$ ($\mathcal{O}^j$ being a certain open subset not equal to $\mathbb{R}^d$), for all $k\geq0$, $\sum_{k\geq0}\chi_{_{Q^{j,k}}}\leq K(d)$ and
\begin{eqnarray*}
\left\|\sum_{j=-\infty}^{+\infty}\sum_{k\geq 0}\left(\frac{|\lambda_{j,k}|}{\left\|\chi_{_{\widetilde{Q^{j,k}}}}\right\|_{q}}\right)^{\eta}\chi_{_{\widetilde{Q^{j,k}}}}\right\|_{\frac{q}{\eta},\frac{p}{\eta}}^{\frac{1}{{\eta}}}\approx \left\|\sum_{j=-\infty}^{+\infty}2^{j\eta}\chi_{\mathcal{O}^j}\right\|_{\frac{q}{\eta},\frac{p}{\eta}}^{\frac{1}{{\eta}}}\approx\left\|f\right\|_{\mathcal{H}_{\mathrm{loc}}^{(q,p)}},
\end{eqnarray*}
$0<\eta\leq1$, by the proof of Theorem 3.9 in \cite{AbFt2} and Theorem 3.2 in \cite{AbFt2}. Notice that each $\left(a_{j,k},\widetilde{Q^{j,k}}\right)$ is also in $\mathcal{A}_{\mathrm{loc}}(q,r,\delta)$ and $f=\sum_{j=-\infty}^{+\infty}\sum_{k\geq0}\lambda_{j,k}a_{j,k}$ (inconditionally) in $\mathcal{H}_{\mathrm{loc}}^{(q,p)}$, but also in $\mathcal{H}_{\mathrm{loc}}^q$ (since $\mathcal{H}_{\mathrm{loc},fin}^{(q,p)}\subset\mathcal{H}_{\mathrm{loc}}^q$). 

For each $j\in\mathbb{Z}$, we put $$G_1^j:=\left\{k\in\mathbb{Z}_{+}: |\widetilde{Q^{j,k}}|<1\right\}$$ and $$G_2^j:=\left\{k\in\mathbb{Z}_{+}: |\widetilde{Q^{j,k}}|\geq1\right\}.$$ Then, since $\mathcal{L}_{r',\phi_{1},\delta}^{(q,p,\eta)\mathrm{loc}}\hookrightarrow\mathcal{L}_{r',\phi_{2},\delta}^{\mathrm{loc}}\hookrightarrow\mathcal{L}_{r',\phi_2,\delta}$ the dual of $\mathcal{H}^q$ (see (\ref{dualqploc3})), we have  
\begin{eqnarray*}
&&\left|\int_{\mathbb{R}^d}g(x)f(x)dx\right|=\left|\sum_{j=-\infty}^{+\infty}\sum_{k\geq0}\lambda_{j,k}\int_{\mathbb{R}^d}g(x)a_{j,k}(x)dx\right|\\
&\leq&\sum_{j=-\infty}^{+\infty}\sum_{k\geq0}|\lambda_{j,k}|\left|\int_{\mathbb{R}^d}g(x)a_{j,k}(x)dx\right|\\
&=&\sum_{j=-\infty}^{+\infty}\sum_{k\in G_1^j}|\lambda_{j,k}|\left|\int_{\mathbb{R}^d}g(x)a_{j,k}(x)dx\right|
+\sum_{j=-\infty}^{+\infty}\sum_{k\in G_2^j}|\lambda_{j,k}|\left|\int_{\mathbb{R}^d}g(x)a_{j,k}(x)dx\right|\\
&=&:\ell_1+\ell_2,
\end{eqnarray*}
with, as in the proof of Theorem \ref{theoremdualqp}, 
\begin{eqnarray*}
\ell_1&=&\sum_{j=-\infty}^{+\infty}\sum_{k\in G_1^j}C_{1}2^j|\widetilde{Q^{j,k}}|^{\frac{1}{q}}\left|\int_{\widetilde{Q^{j,k}}}(g(x)-P_{\widetilde{Q^{j,k}}}^\delta(g)(x))a_{j,k}(x)dx\right|\\
&\leq&C_{1}\sum_{j=-\infty}^{+\infty}2^j\sum_{k\in G_1^j}|\widetilde{Q^{j,k}}|^{\frac{1}{r}}\left(\int_{\widetilde{Q^{j,k}}}|g(x)-P_{\widetilde{Q^{j,k}}}^\delta(g)(x)|^{r'}dx\right)^{\frac{1}{r'}}\\
&\leq&C_{1}\sum_{j=-\infty}^{+\infty}2^j\textit{O}(g,\mathcal{O}^j,r')^{\mathrm{loc}}\\
&\leq& C_{1}\left\|g\right\|_{\mathcal{L}_{r',\phi_{1},\delta}^{(q,p,\eta)\mathrm{loc}}}\left\|\sum_{j=-\infty}^{+\infty}2^{j\eta}\chi_{\mathcal{O}^j}\right\|_{\frac{q}{\eta},\frac{p}{\eta}}^{\frac{1}{\eta}}\leq C\left\|g\right\|_{\mathcal{L}_{r',\phi_{1},\delta}^{(q,p,\eta)\mathrm{loc}}}\left\|f\right\|_{\mathcal{H}_{\mathrm{loc}}^{(q,p)}}
\end{eqnarray*}
and  
\begin{eqnarray*}
\ell_2&=&\sum_{j=-\infty}^{+\infty}\sum_{k\in G_2^j}C_{1}2^j|\widetilde{Q^{j,k}}|^{\frac{1}{q}}\left|\int_{\widetilde{Q^{j,k}}}g(x)a_{j,k}(x)dx\right|\\
&\leq&C_{1}\sum_{j=-\infty}^{+\infty}2^j\sum_{k\in G_2^j}|\widetilde{Q^{j,k}}|^{\frac{1}{r}}\left(\int_{\widetilde{Q^{j,k}}}|g(x)|^{r'}dx\right)^{\frac{1}{r'}}\leq C_{1}\sum_{j=-\infty}^{+\infty}2^j\textit{O}(g,\mathcal{O}^j,r')^{\mathrm{loc}}\\
&\leq&C_{1}\left\|g\right\|_{\mathcal{L}_{r',\phi_{1},\delta}^{(q,p,\eta)\mathrm{loc}}}\left\|\sum_{j=-\infty}^{+\infty}2^{j\eta}\chi_{\mathcal{O}^j}\right\|_{\frac{q}{\eta},\frac{p}{\eta}}^{\frac{1}{\eta}}\leq C\left\|g\right\|_{\mathcal{L}_{r',\phi_{1},\delta}^{(q,p,\eta)\mathrm{loc}}}\left\|f\right\|_{\mathcal{H}_{\mathrm{loc}}^{(q,p)}}.
\end{eqnarray*}
Hence   
\begin{eqnarray*}
|T_g(f)|=\left|\int_{\mathbb{R}^d}g(x)f(x)dx\right|\leq C\left\|g\right\|_{\mathcal{L}_{r',\phi_{1},\delta}^{(q,p,\eta)\mathrm{loc}}}\left\|f\right\|_{\mathcal{H}_{\mathrm{loc}}^{(q,p)}}.
\end{eqnarray*} 
Therefore, $T_g$ extends to a unique continuous linear functional $\widetilde{T_g}$ on $\mathcal{H}_{\mathrm{loc}}^{(q,p)}$, with $\left\|\widetilde{T_g}\right\|=\left\|T_g\right\|\leq C\left\|g\right\|_{\mathcal{L}_{r',\phi_{1},\delta}^{(q,p,\eta)\mathrm{loc}}}$ and $C>0$ is a constant independent of $g$.\\ 

Conversely, let $T\in\left(\mathcal{H}_{\mathrm{loc}}^{(q,p)}\right)^{\ast}$. Then, there exists $g\in\mathcal{L}_{r',\phi_1,\delta}^{\mathrm{loc}}$ such that $T=T_g$; namely $$T(f)=T_g(f)=\int_{\mathbb{R}^d}g(x)f(x)dx,\ \text{ for all }\ f\in\mathcal{H}_{\mathrm{loc},fin}^{(q,p)},\ \ (\oslash)$$ by the proof of Theorem \ref{theoremdualloc} (\cite{AbFt2}, Theorem 4.3). We are going to show that $g$ belongs to $\in\mathcal{L}_{r',\phi_1,\delta}^{(q,p,\eta)\mathrm{loc}}$. Let $\left\{\Omega^j\right\}_{j\in\mathbb{Z}}$ be a family of open subsets such that $\left\|\sum_{j\in\mathbb{Z}}2^{j\eta}\chi_{\Omega^j}\right\|_{\frac{q}{\eta},\frac{p}{\eta}}<+\infty$, and $\left\{Q^{j,k}\right\}_{k\geq0}$ be a family of cubes such that $Q^{j,k}\subset\Omega^j$, for all $k\geq0$, $\sum_{k\geq0}\chi_{_{Q^{j,k}}}\leq K(d)$, with $\widetilde{Q^{j,k}}:=C_0Q^{j,k}$, and 
\begin{eqnarray*}
\textit{O}(g,\Omega^j,r')^{\mathrm{loc}}&\leq&2\sum_{k\geq0:\ |\widetilde{Q^{j,k}}|<1}|\widetilde{Q^{j,k}}|^{\frac{1}{r}}\left(\int_{\widetilde{Q^{j,k}}}|g(x)-P_{\widetilde{Q^{j,k}}}^\delta(g)(x)|^{r'}dx\right)^{\frac{1}{r'}}\\
&+&2\sum_{k\geq0:\ |\widetilde{Q^{j,k}}|\geq1}|\widetilde{Q^{j,k}}|^{\frac{1}{r}}\left(\int_{\widetilde{Q^{j,k}}}|g(x)|^{r'}dx\right)^{\frac{1}{r'}}.\ \ (\oslash\oslash) 
\end{eqnarray*}
Here, the hypothesis $(\oslash\oslash)$ is also justified and its justification is similar to the one of $(\otimes\otimes)$. 

Let $\widetilde{Q^{j,k}}$ with $|\widetilde{Q^{j,k}}|\geq1$ and, $h_{j,k}\in L^{r}(\widetilde{Q^{j,k}})$ such that $\left\|h_{j,k}\right\|_{L^r(\widetilde{Q^{j,k}})}=1$ and $$\left(\int_{\widetilde{Q^{j,k}}}|g(x)|^{r'}dx\right)^{\frac{1}{r'}}=\int_{\widetilde{Q^{j,k}}}g(x)h_{j,k}(x)dx.$$ Put $$a_{j,k}(x):=|\widetilde{Q^{j,k}}|^{\frac{1}{r}-\frac{1}{q}}h_{j,k}(x).$$ We have $(a_{j,k},\widetilde{Q^{j,k}})\in\mathcal{A}_{\mathrm{loc}}(q,r,\delta)$. Put $\lambda_{j,k}:=2^j|\widetilde{Q^{j,k}}|^{\frac{1}{q}}$. As in the proof of Theorem \ref{theoremdualqp}, we have 
\begin{eqnarray*}
&&\left\|\sum_{j\in\mathbb{Z}}\sum_{k\geq0:\ |\widetilde{Q^{j,k}}|\geq1}\left(\frac{|\lambda_{j,k}|}{\left\|\chi_{_{\widetilde{Q^{j,k}}}}\right\|_{q}}\right)^{\eta}\chi_{_{\widetilde{Q^{j,k}}}}\right\|_{\frac{q}{\eta},\frac{p}{\eta}}^{\frac{1}{{\eta}}}\leq C\left\|\sum_{j\in\mathbb{Z}}\sum_{k\geq0:\ |\widetilde{Q^{j,k}}|\geq1}2^{j\eta}\chi_{_{Q^{j,k}}}\right\|_{\frac{q}{\eta},\frac{p}{\eta}}^{\frac{1}{\eta}}\\
&\leq&C\left\|\sum_{j\in\mathbb{Z}}\sum_{k\geq0}2^{j\eta}\chi_{_{Q^{j,k}}}\right\|_{\frac{q}{\eta},\frac{p}{\eta}}^{\frac{1}{\eta}}\leq C\left\|\sum_{j\in\mathbb{Z}}2^{j\eta}\chi_{\Omega^j}\right\|_{\frac{q}{\eta},\frac{p}{\eta}}^{\frac{1}{\eta}}<+\infty.
\end{eqnarray*} 
Then, considering that $0<\eta<q$ (if $p<r<+\infty$) or $0<\eta\leq 1$ (if $r=+\infty$), we have $\sum_{j\in\mathbb{Z}}\sum_{k\geq0:\ |\widetilde{Q^{j,k}}|\geq1}\lambda_{j,k}a_{j,k}\in\mathcal{H}_{\mathrm{loc}}^{(q,p)}$ and
\begin{eqnarray*}
\left\|\sum_{j\in\mathbb{Z}}\sum_{k\geq0:\ |\widetilde{Q^{j,k}}|\geq1}\lambda_{j,k}a_{j,k}\right\|_{\mathcal{H}_{\mathrm{loc}}^{(q,p)}}&\leq&C\left\|\sum_{j\in\mathbb{Z}}\sum_{k\geq0:\ |\widetilde{Q^{j,k}}|\geq1}\left(\frac{|\lambda_{j,k}|}{\left\|\chi_{_{\widetilde{Q^{j,k}}}}\right\|_{q}}\right)^{\eta}\chi_{_{\widetilde{Q^{j,k}}}}\right\|_{\frac{q}{\eta},\frac{p}{\eta}}^{\frac{1}{{\eta}}}\\
&\leq&C\left\|\sum_{j\in\mathbb{Z}}2^{j\eta}\chi_{\Omega^j}\right\|_{\frac{q}{\eta},\frac{p}{\eta}}^{\frac{1}{\eta}},\ \ (\oslash\oslash\oslash)
\end{eqnarray*}
by Theorem 3.3 in \cite{AbFt2} (if $p<r<+\infty$) or Theorem 3.2 in \cite{AbFt2} (if $r=+\infty$). 

Now, let $\widetilde{Q^{j,k}}$ with $|\widetilde{Q^{j,k}}|<1$ and $h_{j,k}\in L^{r}(\widetilde{Q^{j,k}})$ such that $\left\|h_{j,k}\right\|_{L^r(\widetilde{Q^{j,k}})}=1$ and $$\left(\int_{\widetilde{Q^{j,k}}}\left|g(x)-P_{\widetilde{Q^{j,k}}}^{\delta}(g)(x)\right|^{r'}dx\right)^{\frac{1}{r'}}=\int_{\widetilde{Q^{j,k}}}\left(g(x)-P_{\widetilde{Q^{j,k}}}^{\delta}(g)(x)\right)h_{j,k}(x)dx.$$ By putting $$a_{j,k}(x):=C_{h_{j,k},Q^{j,k},\delta}|\widetilde{Q^{j,k}}|^{\frac{1}{r}-\frac{1}{q}}\left(h_{j,k}(x)-P_{\widetilde{Q^{j,k}}}^{\delta}(h_{j,k})(x)\right)\chi_{\widetilde{Q^{j,k}}}(x),$$ with $C_{h_{j,k},Q^{j,k},\delta}:=\left(1+\left\|\left(h_{j,k}-P_{\widetilde{Q^{j,k}}}^{\delta}(h_{j,k})\right)\chi_{\widetilde{Q^{j,k}}}\right\|_r\right)^{-1},$ and $\lambda_{j,k}:=2^j|\widetilde{Q^{j,k}}|^{\frac{1}{q}}$, we have, as in the proof of Theorem \ref{theoremdualqp}, $(a_{j,k},\widetilde{Q^{j,k}})\in\mathcal{A}_{\mathrm{loc}}(q,r,\delta)$ and 
\begin{eqnarray*}
\left\|\sum_{j\in\mathbb{Z}}\sum_{k\geq 0:\ |\widetilde{Q^{j,k}}|<1}\left(\frac{|\lambda_{j,k}|}{\left\|\chi_{_{\widetilde{Q^{j,k}}}}\right\|_{q}}\right)^{\eta}\chi_{_{\widetilde{Q^{j,k}}}}\right\|_{\frac{q}{\eta},\frac{p}{\eta}}^{\frac{1}{{\eta}}}\leq C\left\|\sum_{j\in\mathbb{Z}}2^{j\eta}\chi_{\Omega^j}\right\|_{\frac{q}{\eta},\frac{p}{\eta}}^{\frac{1}{\eta}}<+\infty. 
\end{eqnarray*} 
Hence considering that $0<\eta<q$ (if $p<r<+\infty$) or $0<\eta\leq 1$  (if $r=+\infty$), we have $\sum_{j\in\mathbb{Z}}\sum_{k\geq 0:\ |\widetilde{Q^{j,k}}|<1}\lambda_{j,k}a_{j,k}\in\mathcal{H}_{\mathrm{loc}}^{(q,p)}$ and
\begin{eqnarray*}
\left\|\sum_{j\in\mathbb{Z}}\sum_{k\geq 0:\ |\widetilde{Q^{j,k}}|<1}\lambda_{j,k}a_{j,k}\right\|_{\mathcal{H}_{\mathrm{loc}}^{(q,p)}}&\leq&C
\left\|\sum_{j\in\mathbb{Z}}\sum_{k\geq 0:\ |\widetilde{Q^{j,k}}|<1}\left(\frac{|\lambda_{j,k}|}{\left\|\chi_{_{\widetilde{Q^{j,k}}}}\right\|_{q}}\right)^{\eta}\chi_{_{\widetilde{Q^{j,k}}}}\right\|_{\frac{q}{\eta},\frac{p}{\eta}}^{\frac{1}{{\eta}}}\\
&\leq&C\left\|\sum_{j\in\mathbb{Z}}2^{j\eta}\chi_{\Omega^j}\right\|_{\frac{q}{\eta},\frac{p}{\eta}}^{\frac{1}{\eta}},\ \ (\oslash\oslash\oslash\ \oslash)
\end{eqnarray*}
by Theorem 3.3 in \cite{AbFt2} (if $p<r<+\infty$) or Theorem 3.2 in \cite{AbFt2} (if $r=+\infty$). 

Furthermore, 
\begin{eqnarray*}
\sum_{j\in\mathbb{Z}}2^j\textit{O}(g,\Omega^j,r')^{\mathrm{loc}}&\leq&2\sum_{j\in\mathbb{Z}}2^j\sum_{k\geq0:\ |\widetilde{Q^{j,k}}|<1}|\widetilde{Q^{j,k}}|^{\frac{1}{r}}\left(\int_{\widetilde{Q^{j,k}}}|g(x)-P_{\widetilde{Q^{j,k}}}^\delta(g)(x)|^{r'}dx\right)^{\frac{1}{r'}}\\
&+&2\sum_{j\in\mathbb{Z}}2^j\sum_{k\geq0:\ |\widetilde{Q^{j,k}}|\geq1}|\widetilde{Q^{j,k}}|^{\frac{1}{r}}\left(\int_{\widetilde{Q^{j,k}}}|g(x)|^{r'}dx\right)^{\frac{1}{r'}}\\
&=&2\sum_{j\in\mathbb{Z}}2^j\sum_{k\geq0:\ |\widetilde{Q^{j,k}}|<1}|\widetilde{Q^{j,k}}|^{\frac{1}{r}}\int_{\widetilde{Q^{j,k}}}(g(x)-P_{\widetilde{Q^{j,k}}}^{\delta}(g)(x))h_{j,k}(x)dx\\
&+&2\sum_{j\in\mathbb{Z}}2^j\sum_{k\geq0:\ |\widetilde{Q^{j,k}}|\geq1}|\widetilde{Q^{j,k}}|^{\frac{1}{r}}\int_{\widetilde{Q^{j,k}}}g(x)h_{j,k}(x)dx,
\end{eqnarray*}
by $(\oslash\oslash)$. But, proceeding again as in the proof of Theorem \ref{theoremdualqp}, we have   
\begin{eqnarray*}
&&\sum_{j\in\mathbb{Z}}2^j\sum_{k\geq0:\ |\widetilde{Q^{j,k}}|\geq1}|\widetilde{Q^{j,k}}|^{\frac{1}{r}}\int_{\widetilde{Q^{j,k}}}g(x)h_{j,k}(x)dx\\&=&\sum_{j\in\mathbb{Z}}\sum_{k\geq0:\ |\widetilde{Q^{j,k}}|\geq1}2^j|\widetilde{Q^{j,k}}|^{\frac{1}{q}}\int_{\widetilde{Q^{j,k}}}g(x)a_{j,k}(x)dx=\sum_{j\in\mathbb{Z}}\sum_{k\geq0:\ |\widetilde{Q^{j,k}}|\geq1}\lambda_{j,k}T(a_{j,k})\\
&=&T\left(\sum_{j\in\mathbb{Z}}\sum_{k\geq0:\ |\widetilde{Q^{j,k}}|\geq1}\lambda_{j,k}a_{j,k}\right)\leq C\left\|T\right\|\left\|\sum_{j\in\mathbb{Z}}2^{j\eta}\chi_{\Omega^j}\right\|_{\frac{q}{\eta},\frac{p}{\eta}}^{\frac{1}{\eta}},
\end{eqnarray*}
by $(\oslash)$ and $(\oslash\oslash\oslash)$, and
\begin{eqnarray*}
&&\sum_{j\in\mathbb{Z}}2^j\sum_{k\geq0:\ |\widetilde{Q^{j,k}}|<1}|\widetilde{Q^{j,k}}|^{\frac{1}{r}}\int_{\widetilde{Q^{j,k}}}(g(x)-P_{\widetilde{Q^{j,k}}}^{\delta}(g)(x))h_{j,k}(x)dx\\
&\leq&C\sum_{j\in\mathbb{Z}}\sum_{k\geq0:\ |\widetilde{Q^{j,k}}|<1}2^j|\widetilde{Q^{j,k}}|^{\frac{1}{q}}\int_{\widetilde{Q^{j,k}}}g(x)a_{j,k}(x)dx=C\sum_{j\in\mathbb{Z}}\sum_{k\geq0:\ |\widetilde{Q^{j,k}}|<1}\lambda_{j,k}T(a_{j,k})\\
&=&CT\left(\sum_{j\in\mathbb{Z}}\sum_{k\geq0:\ |\widetilde{Q^{j,k}}|<1}\lambda_{j,k}a_{j,k}\right)\leq C\left\|T\right\|\left\|\sum_{j\in\mathbb{Z}}2^{j\eta}\chi_{\Omega^j}\right\|_{\frac{q}{\eta},\frac{p}{\eta}}^{\frac{1}{\eta}},
\end{eqnarray*}
by $(\oslash)$ and $(\oslash\oslash\oslash\ \oslash)$. Therefore, 
\begin{eqnarray*}
\sum_{j\in\mathbb{Z}}2^j\textit{O}(g,\Omega^j,r')^{\mathrm{loc}}&\leq& CT\left(\sum_{j\in\mathbb{Z}}\sum_{k\geq0:\ |\widetilde{Q^{j,k}}|\geq1}\lambda_{j,k}a_{j,k}\right)\\
&+&CT\left(\sum_{j\in\mathbb{Z}}\sum_{k\geq0:\ |\widetilde{Q^{j,k}}|<1}\lambda_{j,k}a_{j,k}\right)\leq C\left\|T\right\|\left\|\sum_{j\in\mathbb{Z}}2^{j\eta}\chi_{\Omega^j}\right\|_{\frac{q}{\eta},\frac{p}{\eta}}^{\frac{1}{\eta}}.
\end{eqnarray*} 
Hence $g\in\mathcal{L}_{r',\phi_1,\delta}^{(q,p,\eta)\mathrm{loc}}$ and $\left\|g\right\|_{\mathcal{L}_{r',\phi_1,\delta}^{(q,p,\eta)\mathrm{loc}}}\leq C\left\|T\right\|$. This ends the proof of Theorem \ref{theoremdualqploc}.\\
\end{proof}

We have also the local version of Remark \ref{remarqedualeqp1}; namely: 

\begin{remark}\label{remarqedualeqploc1}
Suppose that $0<q\leq1<p<+\infty$. Let $1<r<p'$ , $0<\eta<q$ and $0<\eta_1\leq1$. We have $$\mathcal{L}_{1,\phi_1,\delta}^{(q,p,\eta_1)\mathrm{loc}}\cong\left(\mathcal{H}_{\mathrm{loc}}^{(q,p)}\right)^{\ast}\cong\mathcal{L}_{r,\phi_1,\delta}^{(q,p,\eta)\mathrm{loc}}\hookrightarrow\mathcal{L}_{r,\phi_2,\delta}^{\mathrm{loc}}\cong\left(\mathcal{H}_{\mathrm{loc}}^q\right)^{\ast}$$ and $$\mathcal{L}_{1,\phi_1,\delta}^{(q,p,\eta_1)\mathrm{loc}}\cong\left(\mathcal{H}_{\mathrm{loc}}^{(q,p)}\right)^{\ast}\cong\mathcal{L}_{r,\phi_1,\delta}^{(q,p,\eta)\mathrm{loc}}\hookrightarrow L^{p'}\cong\left(\mathcal{H}_{\mathrm{loc}}^p\right)^{\ast},$$ by Theorem \ref{theoremdualqploc}, because $L^p\cong\mathcal{H}_{\mathrm{loc}}^p\hookrightarrow\mathcal{H}_{\mathrm{loc}}^{(q,p)}$. Thus, when $q=1$, we have $$(L^{\infty},\ell^{p'})\hookrightarrow\mathcal{L}_{1,\phi_1,\delta}^{(1,p,\eta_1)\mathrm{loc}}\cong\mathcal{L}_{r,\phi_1,\delta}^{(1,p,\eta)\mathrm{loc}}\hookrightarrow bmo(\mathbb{R}^d)$$ and $$(L^{\infty},\ell^{p'})\hookrightarrow\mathcal{L}_{1,\phi_1,\delta}^{(1,p,\eta_1)\mathrm{loc}}\cong\mathcal{L}_{r,\phi_1,\delta}^{(1,p,\eta)\mathrm{loc}}\hookrightarrow L^{p'},$$ since $\mathcal{H}_{\mathrm{loc}}^1\hookrightarrow\mathcal{H}_{\mathrm{loc}}^{(1,p)}\hookrightarrow(L^1,\ell^p)$ (see Theorem 3.2 in \cite{AbFt}), $(L^1,\ell^p)^{\ast}\cong(L^{\infty},\ell^{p'})$ and $\mathcal{L}_{r,\phi_2,\delta}^{\mathrm{loc}}\cong(\mathcal{H}_{\mathrm{loc}}^1)^{\ast}\cong bmo(\mathbb{R}^d)$.
\end{remark}  

Here, by combining also Theorem \ref{theoremdualloc} with (\ref{dualqploc3bisbis}), and Theorem \ref{theoremdualqploc}, we obtain a unified characterization of the topological dual of Hardy-amalgam spaces $\mathcal{H}_{\mathrm{loc}}^{(q,p)}$, for $0<q\leq1$ and $q\leq p<+\infty$.

\begin{thm} \label{theoremdualunifieloc}
Suppose that  $0<q\leq1$ and $q\leq p<+\infty$. Let $\max\left\{1,p\right\}<r\leq+\infty$. Then, $\left(\mathcal{H}_{\mathrm{loc}}^{(q,p)}\right)^{\ast}$ is isomorphic to $\mathcal{L}_{r',\phi_1,\delta}^{(q,p,\eta)\mathrm{loc}}$ , where $\frac{1}{r}+\frac{1}{r'}=1$, $0<\eta<q$ if $r<+\infty$, and $0<\eta\leq1$ if $r=+\infty$, with equivalent norms. More precisely, we have the following assertions:  
\begin{enumerate}
\item Let $g\in\mathcal{L}_{r',\phi_1,\delta}^{(q,p,\eta)\mathrm{loc}}$. Then, the mapping $$T_g:\mathcal{H}_{\mathrm{loc},fin}^{(q,p)}\ni f\longmapsto\int_{\mathbb{R}^d}g(x)f(x)dx,$$ where $\mathcal{H}_{\mathrm{loc},fin}^{(q,p)}$ is the subspace of $\mathcal{H}_{\mathrm{loc}}^{(q,p)}$ consisting of finite linear combinations of $(q,r,\delta)$-atoms, extends to a unique continuous linear functional  
$\widetilde{T_g}$ on $\mathcal{H}_{\mathrm{loc}}^{(q,p)}$ such that  
\begin{eqnarray*}
\left\|\widetilde{T_g}\right\|=\left\|T_g\right\|\leq C\left\|g\right\|_{\mathcal{L}_{r',\phi_{1},\delta}^{(q,p,\eta)\mathrm{loc}}}, 
\end{eqnarray*} 
where $C>0$ is a constant independent of $g$. 
\item Conversely, for any $T\in\left(\mathcal{H}_{\mathrm{loc}}^{(q,p)}\right)^{\ast}$, there exists $g\in\mathcal{L}_{r',\phi_1,\delta}^{(q,p,\eta)\mathrm{loc}}$ such that $T=T_g$; namely $$T(f)=\int_{\mathbb{R}^d}g(x)f(x)dx,\ \text{ for all }\ f\in\mathcal{H}_{\mathrm{loc},fin}^{(q,p)},$$ and
\begin{eqnarray*}
\left\|g\right\|_{\mathcal{L}_{r',\phi_{1},\delta}^{(q,p,\eta)\mathrm{loc}}}\leq C\left\|T\right\|, 
\end{eqnarray*} 
where $C>0$ is a constant independent of $T$. 
\end{enumerate}
\end{thm}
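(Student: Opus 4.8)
The plan is to establish Theorem \ref{theoremdualunifieloc} by a dichotomy on whether $p>1$ or $p\leq1$, combining in each range a duality statement already proved with the norm identification $(\ref{dualqploc3bisbis})$.

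First I would dispose of the case $0<q\leq1<p<+\infty$. Here $\max\{1,p\}=p$, so the hypothesis on $r$ reads exactly $p<r\leq+\infty$, and Theorem \ref{theoremdualqploc} applies word for word: it supplies both assertions $(1)$ and $(2)$ with the prescribed ranges $0<\eta<q$ when $r<+\infty$ and $0<\eta\leq1$ when $r=+\infty$. Nothing further is needed in this case.

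Next I would treat the case $0<q\leq p\leq1$. Now $\max\{1,p\}=1$, so the hypothesis becomes $1<r\leq+\infty$. The key preliminary observation is that in this range the admissible exponent $\eta$ always satisfies $0<\eta\leq1$: if $r<+\infty$ then $\eta<q\leq1$, while if $r=+\infty$ then $\eta\leq1$ by hypothesis. Since moreover $0<q,p\leq1$, this is precisely the situation covered by $(\ref{dualqploc3bisbis})$, which yields $\mathcal{L}_{r',\phi_1,\delta}^{(q,p,\eta)\mathrm{loc}}=\mathcal{L}_{r',\phi_1,\delta}^{\mathrm{loc}}$ with equivalent norms, where $\frac{1}{r}+\frac{1}{r'}=1$. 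On the other hand, Theorem \ref{theoremdualloc} identifies $\left(\mathcal{H}_{\mathrm{loc}}^{(q,p)}\right)^{\ast}$ with $\mathcal{L}_{r',\phi_1,\delta}^{\mathrm{loc}}$ with equivalent norms, the pairing being $T_g(f)=\int_{\mathbb{R}^d}g(x)f(x)dx$ on $\mathcal{H}_{\mathrm{loc},fin}^{(q,p)}$. Chaining the two identifications gives the asserted isomorphism $\left(\mathcal{H}_{\mathrm{loc}}^{(q,p)}\right)^{\ast}\cong\mathcal{L}_{r',\phi_1,\delta}^{(q,p,\eta)\mathrm{loc}}$; to obtain the explicit statements $(1)$ and $(2)$, I would simply transport the corresponding parts of Theorem \ref{theoremdualloc} through $(\ref{dualqploc3bisbis})$ and $(\ref{dualqploc3})$, noting that the integral representation on the dense subspace $\mathcal{H}_{\mathrm{loc},fin}^{(q,p)}$ is unchanged and that the continuous extension is unique by density of $\mathcal{H}_{\mathrm{loc},fin}^{(q,p)}$ in $\mathcal{H}_{\mathrm{loc}}^{(q,p)}$.

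I do not anticipate a genuine obstacle here: the content is a bookkeeping assembly of Theorems \ref{theoremdualqploc} and \ref{theoremdualloc}. The one point requiring care — and the only place where the argument could fail — is checking that the range of $\eta$ dictated by $r$ in the statement is compatible with the constraint $0<\eta\leq1$ under which $(\ref{dualqploc3bisbis})$ was proved; this is exactly the observation recorded above for the case $p\leq1$, and it also accounts for the fact (parallel to the remark following Theorem \ref{theoremdualunifie}) that when $p\leq1$ one may in fact take $0<\eta\leq1$ for every admissible $r$.
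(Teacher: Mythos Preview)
Your proof is correct and follows exactly the route the paper indicates: it obtains Theorem~\ref{theoremdualunifieloc} by splitting into the cases $1<p<+\infty$ (where Theorem~\ref{theoremdualqploc} applies directly) and $0<q\leq p\leq1$ (where Theorem~\ref{theoremdualloc} is combined with the norm equivalence $(\ref{dualqploc3bisbis})$). Your verification that the $\eta$-range is compatible with the hypothesis $0<\eta\leq1$ of $(\ref{dualqploc3bisbis})$ is precisely the small point that makes the assembly work.
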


In Theorem \ref{theoremdualunifieloc}, regardless of $\max\left\{1,p\right\}<r\leq+\infty$, we can take $0<\eta\leq1$, when $p\leq1$.\\

Thanks to Theorems  \ref{theoremdualunifie} and \ref{theoremdualunifieloc}, for $0<q\leq1$, $q\leq p<+\infty$ and, $1\leq r<p'$ if $1<p$ and $1\leq r<+\infty$ otherwise, where $\frac{1}{p}+\frac{1}{p'}=1$, with $0<\eta<q$ if $1<r$, and $0<\eta\leq1$ if $r=1$, we have 
\begin{eqnarray}
\left(\mathcal{H}_{\mathrm{loc}}^{(q,p)}\right)^{\ast}\cong\mathcal{L}_{r,\phi_1,\delta}^{(q,p,\eta)\mathrm{loc}}\hookrightarrow\mathcal{L}_{r,\phi_1,\delta}^{(q,p,\eta)}\cong\left(\mathcal{H}^{(q,p)}\right)^{\ast}. \label{inclusqp}
\end{eqnarray}
Moreover, the inclusion in (\ref{inclusqp}) is strict; namely $\mathcal{L}_{r,\phi_1,\delta}^{(q,p,\eta)\mathrm{loc}}\subsetneq\mathcal{L}_{r,\phi_1,\delta}^{(q,p,\eta)}$. 
 
Indeed, we know that $\mathcal{L}_{r,\phi_1,\delta}^{(q,p,\eta)\mathrm{loc}}\subset\mathcal{L}_{r,\phi_1,\delta}^{(q,p,\eta)}$ , by (\ref{dualqploc3bisbisbis}), and $\mathcal{P_{\delta}}\subset\mathcal{L}_{r,\phi_1,\delta}^{(q,p,\eta)}$. In other respects,  
\begin{eqnarray}
\mathcal{P_{\delta}}\not\subset\mathcal{L}_{r,\phi_1,\delta}^{(q,p,\eta)\mathrm{loc}}. \label{compardeshqp1}
\end{eqnarray}
We infer from these relations the result. Hence, we must prove (\ref{compardeshqp1}) to finish. Relation (\ref{compardeshqp1}) is clear when $0<q<1$, since $\mathcal{P_{\delta}}\not\subset L^{\infty}$ and $\mathcal{L}_{r,\phi_1,\delta}^{(q,p,\eta)\mathrm{loc}}\subset\mathcal{L}_{r,\phi_2,\delta}^{\mathrm{loc}}=\Lambda_{d\left(\frac{1}{q}-1\right)}\subset L^{\infty}$, where $\Lambda_{d\left(\frac{1}{q}-1\right)}$ is the dual of $\mathcal{H}_{\mathrm{loc}}^q$ defined by D. Goldberg \cite{DGG}. When $q=1$, or more generally $0<q\leq1<p<+\infty$, Relation (\ref{compardeshqp1}) is proved as follows. 
 
We know that $\mathcal{L}_{r,\phi_1,\delta}^{(q,p,\eta)\mathrm{loc}}\subset\mathcal{L}_{r,\phi_1,\delta}^{\mathrm{loc}}$ , by (\ref{dualqploc3}). Let $0\neq c\in\mathbb{C}$. Consider the constant function $g=c$. Then, $g\in\mathcal{P_{\delta}}$, and so

\begin{eqnarray*}
\left\|g\right\|_{\mathcal{L}_{r,\phi_1,\delta}^{\mathrm{loc}}}&=&\sup_{\underset{|Q|\geq1}{Q\in\mathcal{Q}}}\frac{|Q|}{\left\|\chi_Q\right\|_{q,p}}\left(\frac{1}{|Q|}\int_{Q}|g(x)|^rdx\right)^{\frac{1}{r}}\\
&\geq&\sup_{\underset{|Q|\geq1}{Q\in\mathcal{Q}}}\frac{|Q|}{\left\|\chi_Q\right\|_p}\left(\frac{1}{|Q|}\int_{Q}|g(x)|^rdx\right)^{\frac{1}{r}}=|c|\sup_{\underset{|Q|\geq1}{Q\in\mathcal{Q}}}|Q|^{1-\frac{1}{p}}=+\infty,
\end{eqnarray*}
since $1-\frac{1}{p}>0$ implies that $|Q|^{1-\frac{1}{p}}\uparrow+\infty$ as $|Q|\uparrow+\infty$. Hence $g\notin\mathcal{L}_{r,\phi_1,\delta}^{\mathrm{loc}}$. We deduce from it that $\mathcal{P_{\delta}}\not\subset\mathcal{L}_{r,\phi_1,\delta}^{(q,p,\eta)\mathrm{loc}}$, since $\mathcal{L}_{r,\phi_1,\delta}^{(q,p,\eta)\mathrm{loc}}\subset\mathcal{L}_{r,\phi_1,\delta}^{\mathrm{loc}}$; this states (\ref{compardeshqp1}).

Now we know that the dual of $\mathcal{H}_{\mathrm{loc}}^{(q,p)}$ is strictly embedded in the dual of $\mathcal{H}^{(q,p)}$, then is it possible to claim that $\mathcal{H}^{(q,p)}$ is strictly embedded in $\mathcal{H}_{\mathrm{loc}}^{(q,p)}$, for $0<q\leq1<p<+\infty$ in particular? This question will be studied in our forthcoming paper.\\

Another consequence of Theorem \ref{theoremdualunifieloc} is the following result that extends Corallary 5.6 in \cite{AbFt2}.
 
\begin{cor} 
Suppose that $0<q\leq1$ and $q\leq p<+\infty$. Let $\max\left\{p,1\right\}<r\leq+\infty$ and $T$ be a $\mathcal{S}^0$ pseudo-differential operator. Then, there exists a positive constant $C>0$ such that, for all $f$ in $\mathcal{L}_{r',\phi_1,\delta}^{(q,p,\eta)\mathrm{loc}}$, where $0<\eta<q$, if $r<+\infty$, and $0<\eta\leq1$ if $r=+\infty$, 
\begin{eqnarray*}
\left\|T(f)\right\|_{\mathcal{L}_{r',\phi_{1},\delta}^{(q,p,\eta)\mathrm{loc}}}\leq C\left\|f\right\|_{\mathcal{L}_{r',\phi_{1},\delta}^{(q,p,\eta)\mathrm{loc}}}.
\end{eqnarray*}
\end{cor}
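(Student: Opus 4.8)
The plan is to deduce the corollary from Theorem~\ref{theoremdualunifieloc} by a duality argument, in the same way one derives the $BMO$-boundedness of classical Calder\'on--Zygmund operators from their boundedness on $\mathcal{H}^1$. First I would record two standard facts about $\mathcal{S}^0$ pseudo-differential operators. (i) The transpose $T^{t}$ of a $\mathcal{S}^0$ operator is again a $\mathcal{S}^0$ operator (by the symbolic calculus), so that, by the boundedness of $\mathcal{S}^0$ pseudo-differential operators on $\mathcal{H}_{\mathrm{loc}}^{(q,p)}$ established in \cite{AbFt2} (valid for the whole range $0<q\le1$, $q\le p<+\infty$), $T^{t}$ maps $\mathcal{H}_{\mathrm{loc}}^{(q,p)}$ continuously into itself. (ii) Since a $\mathcal{S}^0$ operator is bounded on every $L^{s}$ with $1<s<+\infty$ and its distributional kernel decays rapidly away from the diagonal, $T$ and $T^{t}$ send any function of at most polynomial growth in $L_{\mathrm{loc}}^{r'}$ to a function in $L_{\mathrm{loc}}^{r'}$; in particular, any $f\in\mathcal{L}_{r',\phi_{1},\delta}^{(q,p,\eta)\mathrm{loc}}$ lies in $\mathcal{S}'$ and $Tf\in L_{\mathrm{loc}}^{r'}$ (split $f$ into a compactly supported $L^{r'}$ part and a polynomially growing remainder supported far off, handled respectively by $L^{r'}$-boundedness and by the smooth, rapidly decaying kernel), while any $h\in\mathcal{H}_{\mathrm{loc},fin}^{(q,p)}$, being a finite sum of $(q,r,\delta)$-atoms and hence a compactly supported $L^{r}$-function, is sent by $T^{t}$ to an $L^{r}$-function with rapid decay at infinity.

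Granting these facts, the core of the argument is short. Given $f\in\mathcal{L}_{r',\phi_{1},\delta}^{(q,p,\eta)\mathrm{loc}}$, Theorem~\ref{theoremdualunifieloc}(1) yields the continuous functional $\widetilde{T_f}$ on $\mathcal{H}_{\mathrm{loc}}^{(q,p)}$ with $\|\widetilde{T_f}\|\lsim\|f\|_{\mathcal{L}_{r',\phi_{1},\delta}^{(q,p,\eta)\mathrm{loc}}}$. I would then form $S:=\widetilde{T_f}\circ T^{t}\in\bigl(\mathcal{H}_{\mathrm{loc}}^{(q,p)}\bigr)^{\ast}$, which obeys $\|S\|\leq\|T^{t}\|\,\|\widetilde{T_f}\|\lsim\|f\|_{\mathcal{L}_{r',\phi_{1},\delta}^{(q,p,\eta)\mathrm{loc}}}$, and apply Theorem~\ref{theoremdualunifieloc}(2) to get $g\in\mathcal{L}_{r',\phi_{1},\delta}^{(q,p,\eta)\mathrm{loc}}$ with $S=T_g$ on $\mathcal{H}_{\mathrm{loc},fin}^{(q,p)}$ and $\|g\|_{\mathcal{L}_{r',\phi_{1},\delta}^{(q,p,\eta)\mathrm{loc}}}\lsim\|f\|_{\mathcal{L}_{r',\phi_{1},\delta}^{(q,p,\eta)\mathrm{loc}}}$. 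To conclude it suffices to identify $g$ with $T(f)$ almost everywhere: for every $h\in\mathcal{H}_{\mathrm{loc},fin}^{(q,p)}$ one has $\int g\,h=S(h)=\widetilde{T_f}(T^{t}h)=\int f\,(T^{t}h)=\int (Tf)\,h$, and since every compactly supported $L^{r}$-function is, up to a multiplicative constant, a $(q,r,\delta)$-atom associated with a cube of measure $\ge1$ (for which no moment condition is imposed) and hence belongs to $\mathcal{H}_{\mathrm{loc},fin}^{(q,p)}$, this forces $g=Tf$. The asserted inequality is then exactly the norm bound already obtained for $g$.

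The step I expect to be the main obstacle is the chain of equalities $\widetilde{T_f}(T^{t}h)=\int f\,(T^{t}h)\,dx=\int (Tf)\,h\,dx$. The first equality requires extending the defining formula $T_f(\cdot)=\int f\,(\cdot)$ beyond $\mathcal{H}_{\mathrm{loc},fin}^{(q,p)}$ to the element $T^{t}h$; one does this by approximating $T^{t}h$ in $\mathcal{H}_{\mathrm{loc}}^{(q,p)}$ by finite atomic sums while simultaneously controlling the integral against $f$, which is possible because $T^{t}h\in L^{r}$ decays rapidly and $f$ grows at most polynomially. The second equality is the formal transpose identity, legitimate here precisely because $f\in\mathcal{S}'$, $Tf\in L_{\mathrm{loc}}^{r'}$, and $T^{t}h\in L^{r}$ has rapid decay, so the kernel estimates for a $\mathcal{S}^0$ operator make every integral involved absolutely convergent; this is carried out exactly as in the corresponding step of \cite{AbFt2}, to which I would refer for the routine verifications. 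Finally, by the norm equivalence \eqref{dualqploc3bisbis} the statement reduces, when $0<q\le p\le1$, to the result already proved in \cite{AbFt2}, so the genuine new content is the case $0<q\le1<p<+\infty$.
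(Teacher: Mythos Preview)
Your proposal is correct and is precisely the duality argument the paper has in mind: the corollary is stated without proof, as an immediate consequence of Theorem~\ref{theoremdualunifieloc} together with the $\mathcal{H}_{\mathrm{loc}}^{(q,p)}$-boundedness of $\mathcal{S}^0$ operators from \cite{AbFt2}, and it is announced as the extension of \cite[Corollary~5.6]{AbFt2}, whose proof follows the very same transpose-and-identify scheme you describe. The technical justifications you flag (that $Tf\in L_{\mathrm{loc}}^{r'}$, that $\widetilde{T_f}(T^{t}h)=\int f\,T^{t}h$, and that $\mathcal{H}_{\mathrm{loc},fin}^{(q,p)}$ contains all compactly supported $L^r$-functions via large-cube atoms, hence determines $g$ a.e.) are exactly the points one handles in \cite{AbFt2}, and your references there are appropriate.
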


\end{document}